\setlist{nosep}
\setlist[enumerate, 1]{label={\rm (}\emph{\alph*}{\rm )}}
\setlist[enumerate, 2]{label={\rm (}\emph{\alph{enumi}.\arabic*}{\rm )}}
\else\@footnotetext{\@setkeywords}\fi}
\else\@footnotetext{\@setkeywords}\fi
\else\@footnotetext{\@setaltkeywords}\fi}
\newif\iffr
\newcommand\fren[2]{\iffr #1\else #2\fi}
\newcommand\journal\emph
\renewcommand\and{\fren{et}{and}\xspace}
\theoremstyle{plain}
\newtheorem{theorem}{Theorem}[section]
\newtheorem*{theorem*}{Theorem}
\newtheorem{proposition}[theorem]{Proposition}
\newtheorem{lemma}[theorem]{Lemma}
\newtheorem{corollary}[theorem]{Corollary}
\theoremstyle{remark}
\newtheorem{remark}[theorem]{Remark}
\newtheorem{example}[theorem]{Example}
\theoremstyle{definition}
\let\paragraph\undefined
\newtheorem{paragraph}[theorem]{}
\newtheorem*{theoremB}{Theorem B}
\theoremstyle{plain}
\numberwithin{equation}{theorem}
\newif\ifsection
\preto\section{\sectiontrue}
\preto\subsection{\sectionfalse}
\xapptocmd\@sect{%
  \ifsection
    \numberwithin{theorem}{section}
  \else
    \numberwithin{theorem}{subsection}
  \fi
  \setcounter{theorem}{0}\relax}
  {}{}
\let\forlang\emph
\let\ndef\emph
\let\nbd\nobreakdash
\def\xpoint{\futurelet\@let@token\@xpoint}
\def\@xpoint{%
  \ifx\@let@token.\else
    .%
  \fi
  \xspace}
\newcommand\resp[1]{{\rm (resp.}~#1{\rm )}}
\newcommand\zbox[1]{\makebox[0pt][l]{#1}}
\newcommand\pbox[1]{\zbox{\quad#1}}
\newcommand\quadtext[1]{\quad\text{#1}\quad}
\newcommand\quadand{\quadtext{and}}
\renewcommand\le\leqslant
\renewcommand\ge\geqslant
\renewcommand\epsilon\varepsilon
\renewcommand\phi\varphi
\let\hookto\hookrightarrow
\let\xto\xrightarrow
\newcommand\tod\Rightarrow
\newcommand\tot\Rrightarrow
\newcommand\var{{-}}
\newcommand{\sauf}{\mathchoice{\raise 1.8pt\hbox{${\scriptstyle\kern
2.5pt\smallsetminus\kern 2.5pt}$}}{\raise 1.8pt\hbox{${\scriptstyle\kern
2.5pt\smallsetminus\kern 2.5pt}$}}{\raise
1.8pt\hbox{${\scriptscriptstyle\kern 1.5pt\smallsetminus\kern
1.5pt}$}}{\raise 1.8pt\hbox{${\scriptscriptstyle\kern
1.5pt\smallsetminus\kern 1.5pt}$}}}
\newcommand\HomOpLax{\Homi_{\mathrm{oplax}}}
\newcommand\HomLax{\Homi_{\mathrm{lax}}}
\newcommand\ooCatOpLax{\infty\hbox{\protect\nbd-}\mathcal{C}\mathsf{at}^{}_\mathrm{oplax}}
\newcommand\ooCatLax{\infty\hbox{\protect\nbd-}\mathcal{C}\mathsf{at}^{}_\mathrm{lax}}
\newcommand\ooCatOpLaxGray{\infty\hbox{\protect\nbd-}\mathbb{C}\mathsf{at}^{}_\mathrm{oplax}}
\newcommand\pdfoo{\texorpdfstring{$\infty$}{\unichar{"221E}}}
\newcommand\Z{\mathbb{Z}}
\newcommand\N{\mathbb{N}}
\let\C\relax
\newcommand\C{\mathcal{C}}
\newcommand{\Hom}{\operatorname{\mathsf{Hom}}}
\newcommand{\Homi}{\operatorname{\kern.5truept\underline{\kern-.5truept\mathsf{Hom}\kern-.5truept}\kern1truept}}
\newcommand{\pref}[1]{{\widehat{ #1 }}}
\newcommand\id[1]{1_{#1}}
\renewcommand\o\circ
\newcommand\op{\mathrm{op}}
\newcommand{\Cat}{{\mathcal{C}\mspace{-2.mu}\it{at}}}
\newcommand{\nCat}[1]{{#1}\hbox{\protect\nbd-}\kern1pt\Cat}
\newcommand{\ooCat}{\nCat{\infty}}
\newcommand{\coCat}{{\it{co}\mathcal{C}\mspace{-2.mu}\it{at}}}
\newcommand{\ncoCat}[1]{{#1}\hbox{\protect\nbd-}\kern1pt\coCat}
\newcommand\oo[1]{$\infty$\=/}
\let\comp\ast
\newcommand\Dn[1]{\mathrm{D}_{#1}}
\newcommand\cDelta{\mathbf{\Delta}}
\newcommand\Deltan[1]{\varDelta_{#1}}
\newcommand\cO{\mathcal{O}}
\newcommand\On[1]{\mathcal{O}_{#1}}
\newcommand\Ontr[2]{\mathcal{O}_{#1}^{\le #2}}
\newcommand{\tr}[2]{\mathchoice
  {#1\raise -1.8pt\vbox{\hbox{$\kern -.8pt/#2$}}}
  {#1\raise -1.8pt\vbox{\hbox{$\kern -.8pt/#2$}}\kern .8pt}
  {#1\raise -1.8pt\vbox{\hbox{$\scriptstyle\kern -.8pt /#2$}}}
  {#1\raise -1.8pt\vbox{\hbox{$\scriptscriptstyle\kern -.8pt /#2$}}}}
\newcommand{\trm}[2]{\mathchoice
  {#1\raise -1.8pt\vbox{\hbox{$\kern -.8pt\!\stackrel{\,\rm co}{/}\!\!#2$}}}
  {#1\raise -1.8pt\vbox{\hbox{$\kern -.8pt\!\stackrel{\,\rm co}{/}\!\!#2$}}\kern .8pt}
  {#1\raise -1.8pt\vbox{\hbox{$\scriptstyle\kern -.8pt\!\stackrel{\,\,\rm co}{/}\!\!#2$}}\kern .8pt}
  {TODO}}
\newcommand{\trto}[2]{\mathchoice
  {#1\raise -1.8pt\vbox{\hbox{$\kern -.8pt\!\stackrel{\,\,t \rm o}{/}\!\!\!#2$}}}
  {#1\raise -1.8pt\vbox{\hbox{$\kern -.8pt\!\stackrel{\,\,t \rm o}{/}\!\!\!#2$}}\kern .8pt}
  {#1\raise -1.8pt\vbox{\hbox{$\scriptstyle\kern -.8pt\!\stackrel{\,\,t \rm o}{/}\!\!\!#2$}}\kern .8pt}
  {TODO}}
\newcommand{\cotr}[2]{\mathchoice
  {\raise -1.8pt\vbox{\hbox{$#2\backslash$}}#1}
  {\raise -1.8pt\vbox{\hbox{$#2\backslash$}}#1}
  {\raise -1.8pt\vbox{\hbox{$\scriptstyle#2\backslash$}}#1}
  {\raise -1.8pt\vbox{\hbox{$\scriptscriptstyle#2\backslash$}}#1}}
\newcommand{\cotrm}[2]{\mathchoice
  {\raise -1.8pt\vbox{\hbox{$#2\!\stackrel{\!\rm co}{\backslash}$}}#1}
  {\raise -1.8pt\vbox{\hbox{$#2\!\stackrel{\!\rm co}{\backslash}$}}#1}
  {\raise -1.8pt\vbox{\hbox{$\scriptstyle#2\!\stackrel{\!\rm co}{\backslash}$}}#1}
  {TODO}}
\newcommand\Cyl\Gamma
\newcommand\comma{\operatorname{\downarrow}}
\newcommand{\Cad}{\mathcal{C}_{\mathrm{ad}}}
\newcommand\Zdec{\underline{\mathbb{Z'}\kern -2.5pt}\kern 2pt}
\newcommand\cn{\mathsf{c}} % complexe normalisé
\newcommand\W{\mathcal{W}}
\newcommand{\pullbackcorner}[1][dr]{\save*!/#1-1.2pc/#1:(-1,1)@^{|-}\restore}
\newcommand\Kcat[2]{B^{#2}#1}
\newcommand\Ktop[2]{K(#1,#2)}
\newcommand\timesh{\times^{\textrm{h}}}
\author{Dimitri Ara}
\address{Aix~Marseille~Univ,~CNRS,~Centrale~Marseille,~I2M,~Marseille,~France}
\email{dimitri.ara@univ-amu.fr}
\urladdr{\href{http://www.i2m.univ-amu.fr/perso/dimitri.ara/}{http://www.i2m.univ-amu.fr/perso/dimitri.ara/}}
\title{A Quillen Theorem B for strict $\infty$-categories}
\begin{document}

\frontmatter

\begin{abstract}
We prove a generalization of Quillen's Theorem B to strict \oo-categories.
More generally, we show that under similar hypothesis as for
Theorem~B, the comma construction for strict \oo-categories, that we
introduced with Maltsiniotis in a previous paper, is the homotopy pullback
with respect to Thomason equivalences. We give several applications of these
results, including the construction of new models for certain Eilenberg--Mac
Lane spaces.
\end{abstract}

\subjclass{18A25, 18D05, 18G30, 18G55, 55P15, 55P20, 55P35, 55U10, 55U35}

\keywords{comma \oo-categories, Eilenberg--Mac Lane spaces, loop spaces,
oplax transformations, simplicial sets, slice \oo-categories, Street's
nerve, strict \oo-categories, Theorem B, Thomason equivalences}

\maketitle

\mainmatter

\section*{Introduction}

This paper is a part of an ongoing research project with Maltsiniotis and
Gagna about the homotopy theory of $\ooCat$, the category of strict
\oo-categories and strict \oo-functors, including for the moment the papers
and preprints \cite{AraMaltsiNThom, AraMaltsiCondE, AraMaltsiJoint,
AraMaltsiThmAI, AraMaltsiThmAII, Gagna}. By ``homotopy theory of $\ooCat$'',
we mean the study of strict \oo-categories through their classifying spaces,
defined by means of the so-called Street nerve \cite{StreetOrient},
associating a simplicial set to every strict \oo-category. In other words,
the homotopy theory of~$\ooCat$ is the homotopy theory of the pair $(\ooCat,
\W_\infty)$, where $\W_\infty$ denotes the class of \ndef{Thomason
equivalences} of $\ooCat$, that is, of strict \oo-functors sent to simplicial weak
homotopy equivalences by Street's nerve. Gagna proved in \cite{Gagna} that
the localization of $\ooCat$ by $\W_\infty$ is equivalent to the homotopy
category of spaces.  Therefore, the study of the homotopy theory of $\ooCat$
is the study of the homotopy theory of spaces from an alternative point of
view. We refer to the introduction of \cite{AraMaltsiThmAI} for a detailed
exposition of this project.

But let us step back.  This homotopy theory of strict \oo-categories is
inspired by the homotopy theory of $\Cat$, the category of small categories,
as developed by Quillen~\cite{QuillenHAKTI}, Thomason~\cite{Thomason} and
Grothendieck~\cite{GrothPS} (see also~\cite{Maltsi, Cisinski}). The starting
point of this theory is the idea of Quillen to define his higher algebraic
K-theory groups in terms of homotopy types of categories. To study the
resulting theory, he introduced his famous Theorems A and B,
establishing the main properties of \ndef{Thomason equivalences} of $\Cat$,
that is, functors sent to simplicial weak homotopy equivalences by the usual
nerve functor.

Before recalling the statement of Theorem B, let us introduce some notation
and terminology. If $u : A \to B$ is a functor and $b$ is an object of $B$,
we will denote by $\cotr{A}{b}$ the category of objects of $A$ under $b$,
that is, of pairs $(a, f : b \to u(a))$, where $a$ is an object of $A$ and
$f$ is a morphism of $B$. In other words,
the category $\cotr{A}{b}$ is the comma category $b \comma u$, where $b$ is
identified with the functor from the terminal category to $B$ of value~$b$.
We will say that a functor $u : A \to B$ is \ndef{colocally homotopically
constant} if, for every morphism $f : b \to b'$ of $B$, the functor
  $\cotr{A}{f} :
\cotr{A}{b'} \to \cotr{A}{b}$ induced by $f$ is a Thomason equivalence.

\begin{theoremB}[Quillen]
  If $u : A \to B$ is a colocally homotopically constant functor, then, for
  every object $b$ of $B$, the category $\cotr{A}{b}$ is canonically the homotopy
  fiber of $u$ at $b$.
\end{theoremB}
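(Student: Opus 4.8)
The plan is to exhibit the canonical comparison map and prove it is a Thomason equivalence by Quillen's bisimplicial technique. Recall that the homotopy fibre of $\mathrm N u$ at $b$ is the homotopy pullback of $\ast\xto{b}\mathrm N B\xot{\mathrm N u}\mathrm N A$, modelled by $\mathrm N A\times_{\mathrm N B}\mathrm{P}_b\mathrm N B$, where $\mathrm{P}_b\mathrm N B$ is the space of paths starting at $b$. An object $(a,f\colon b\to u(a))$ of $\cotr A b$ determines the vertex $a$ of $\mathrm N A$ together with the edge $f$, read as a path from $b$ to $u(a)=\mathrm N u(a)$; this is natural and yields the canonical map $\mathrm N\cotr A b\to\mathrm{hofib}_b(\mathrm N u)$ that the theorem asserts to be a weak equivalence. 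Observe also that colocal homotopical constancy says exactly that the functor $b\mapsto\cotr A b$, $f\mapsto\cotr A f$, from $B^{\op}$ to $\Cat$, carries every morphism of $B$ to a Thomason equivalence.

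Next I would introduce the bisimplicial set $W$ with $W_{p,q}$ the set of data $(b_0\to\cdots\to b_p,\ a_0\to\cdots\to a_q,\ \xi\colon b_p\to u(a_0))$ consisting of a $p$-simplex of $\mathrm N B$, a $q$-simplex of $\mathrm N A$, and a morphism of $B$ linking them, with the evident functorialities. It carries two projections, onto $\mathrm N A$ (forgetting the $b_i$ and $\xi$) and onto $\mathrm N B$ (forgetting the $a_j$ and $\xi$). For a fixed $q$-simplex $a_0\to\cdots\to a_q$ of $\mathrm N A$, the simplicial set $W_{\bullet,q}$ restricted to it is the nerve of the slice $B/u(a_0)$, which has a terminal object and is thus contractible; hence $W_{\bullet,q}\to(\mathrm N A)_q$ is a weak equivalence for every $q$, and the realization lemma for bisimplicial sets gives a weak equivalence $\mathrm{diag}\,W\to\mathrm N A$. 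The tautological morphisms $\xi$ moreover provide a homotopy between the projection to $\mathrm N B$ and $\mathrm N u$ composed with the equivalence to $\mathrm N A$, so computing $\mathrm{hofib}_b(\mathrm N u)$ amounts to computing the homotopy fibre of $\mathrm{diag}\,W\to\mathrm N B$ over $b$.

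Finally, dualizing the bookkeeping, for a fixed $p$-simplex $b_0\to\cdots\to b_p$ of $\mathrm N B$ the simplicial set $W_{p,\bullet}$ restricted to it is the nerve of $\cotr A{b_p}$, and the structure maps of $W$ in the $p$-direction, induced by the faces changing $b_p$, are precisely the functors $\cotr A f$. Here the hypothesis enters: colocal constancy makes every such structure map a Thomason equivalence, so $W$ is, columnwise over $\mathrm N B$, homotopy constant. Quillen's bisimplicial lemma (the technical heart of Theorem~B) then asserts that under exactly this condition the square with corners $\mathrm N\cotr A b$, $\mathrm{diag}\,W$, the point $b$, and $\mathrm N B$ is homotopy cartesian, i.e.\ $\mathrm{diag}\,W\to\mathrm N B$ behaves as a quasifibration with fibre $\mathrm N\cotr A b$ over the vertex $b$. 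Composing with the equivalence $\mathrm{diag}\,W\to\mathrm N A$ identifies this fibre with $\mathrm{hofib}_b(\mathrm N u)$, and unwinding the identifications shows that the resulting equivalence is the canonical map of the first paragraph, so that $\cotr A b$ is canonically the homotopy fibre. I expect the only genuinely nontrivial step to be this last one: passing from the columnwise weak equivalences furnished by colocal constancy to a homotopy-cartesianness statement about the diagonal is Quillen's bisimplicial lemma, proved by a quasifibration argument, and it carries all the homotopy-theoretic content; the construction of $W$ and the identification of its diagonal with $\mathrm N A$ are essentially formal.
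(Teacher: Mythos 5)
Your argument is correct: it is Quillen's original proof of Theorem~B, via the bisimplicial set $W$ (a two-sided bar construction on $u$), the realization lemma applied row-wise (contractibility of the slices $B/u(a_0)$, which have terminal objects) to identify $\mathrm{diag}\,W$ with $NA$, and Quillen's bisimplicial lemma applied column-wise, where colocal homotopical constancy is exactly the hypothesis that all the structure maps $\cotr{A}{f}$ between column fibers are weak equivalences. The paper, however, never argues this way: the classical statement is only recalled in the introduction and is recovered as the $n=1$ case of the $\infty$-categorical Theorem~B, whose proof (following Cisinski rather than Quillen) is organized quite differently. There, the role of your $\mathrm{diag}\,W$ is played by the comma category $B \comma u$ itself; the realization-lemma step is replaced by the observation that the canonical $j : A \to B \comma u$ is a strong (oplax-transformation) deformation retract, hence a Thomason equivalence; and the bisimplicial lemma is replaced by Rezk's criterion (Proposition~\ref{prop:simpl_thmB}) characterizing the maps all of whose pullbacks are homotopy pullbacks by a fiberwise condition over simplices $\Deltan{0} \to \Deltan{m} \to NB$, which is verified for $p_1 : B \comma u \to B$ using a contraction of $\On{m}$ onto its initial object and the (sesqui)functoriality of the comma construction. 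The trade-off is clear: your route is elementary and self-contained, but it leans on features special to nerves of $1$-categories (rows and columns of $W$ being nerves of slices and coslices, terminal objects giving contractibility), which is precisely what obstructs its extension to strict $\infty$-categories; the paper's route front-loads the homotopy theory into Rezk's simplicial criterion and pushes all categorical input into the comma construction, which is what lets the same proof run verbatim for $\infty$-categories and also yields the stronger Barwick--Kan-type statement that $u \comma v$ computes the homotopy pullback.
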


The homotopy fiber of the statement has to be understood in the sense of the
Thomason model category structure \cite{Thomason} but can also be
interpreted by simply saying that $N(\cotr{A}{b})$ is the simplicial
homotopy fiber of $Nu$ at $b$, where $N$ denotes the nerve functor. This
theorem was generalized by Barwick and Kan \cite{BarwickKanThmB} to the
following statement about homotopy pullbacks:

\begin{theorem*}[Barwick--Kan]
  Consider a diagram
  \[
    \xymatrix{
      A \ar[r]^u & C & B \ar[l]_v
    }
  \]
  of $\Cat$, where $v$ is colocally homotopically constant.
  Then the comma construction~$u \comma v$ is canonically the homotopy
  pullback $A \timesh_C B$.
\end{theorem*}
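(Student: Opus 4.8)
The plan is to realise $u \comma v$ as a strict fibre product over $C$ along a well-behaved fibration, and to reduce the homotopy-pullback statement to Theorem~B. First I would introduce the comma category $E := \id{C} \comma v$, whose objects are triples $(c, b, \phi \colon c \to v(b))$, together with its two projections $\pi \colon E \to C$, $(c,b,\phi) \mapsto c$, and $\rho \colon E \to B$, $(c,b,\phi)\mapsto b$, as well as the projections $p \colon u\comma v \to A$ and $q \colon u\comma v \to B$ of the comma square. A direct inspection of the definitions gives a canonical isomorphism
\[
  u \comma v \;\cong\; A \times_C E,
\]
the strict fibre product of $u \colon A \to C$ and $\pi \colon E \to C$, under which $q$ corresponds to $\rho$ composed with the projection to $E$, and the canonical $2$\nbd-cell $up \Rightarrow vq$ of the comma square to the natural transformation with components $\phi$. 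So the theorem amounts to showing that this particular strict fibre product computes the homotopy pullback.

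Next I would record two structural facts about $E$, both independent of the hypothesis. First, the projection $\rho \colon E \to B$ is a Thomason equivalence: the assignment $s \colon b \mapsto (v(b), b, \id{v(b)})$ is a section of $\rho$, and the arrows $\phi$ assemble into a natural transformation $\id{E} \Rightarrow s\rho$, so that $\rho$ is a Thomason equivalence with homotopy inverse $s$; moreover $v\rho$ is linked to $\pi$ by the natural transformation with components $\phi$, exhibiting $(E,\pi)$ as a replacement of $(B,v)$ over $C$. Consequently the homotopy pullback of $A \xrightarrow{u} C \xleftarrow{\pi} E$ agrees with $A \timesh_C B$, and it remains only to prove that $A\times_C E$ is this homotopy pullback. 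Second, $\pi \colon E \to C$ is a Grothendieck fibration: a cartesian lift of $g \colon c \to c'$ at $(c',b,\phi)$ is given by $(c,b, \phi g)$. Its strict fibre over $c$ is canonically $\cotr{B}{c}$, and its base-change functor along $g$ is then exactly $\cotr{B}{g} \colon \cotr{B}{c'} \to \cotr{B}{c}$.

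This is where the hypothesis enters, and in the sharpest possible form: to say that $v$ is colocally homotopically constant is \emph{precisely} to say that every base-change functor $\cotr{B}{g}$ of the fibration $\pi$ is a Thomason equivalence. The remaining step — and the main obstacle — is the fibred form of Theorem~B: a Grothendieck fibration all of whose base-change functors are Thomason equivalences is homotopically proper, in the sense that its strict fibres are its homotopy fibres and every strict pullback along it is a homotopy pullback. Granting this for $\pi$, and hence for its pullback $p \colon A\times_C E \to A$, which is again such a fibration (its base changes along $\alpha \colon a \to a'$ are the $\cotr{B}{u(\alpha)}$), the fibre criterion for homotopy pullbacks reduces everything to a fibrewise check over $A$: for each object $a$ the induced map on homotopy fibres is, under the identifications above, the identity of $\cotr{B}{u(a)}$, the homotopy fibre of $v$ at $u(a)$ being computed by Theorem~B applied to $v$ (homotopy pullbacks and homotopy fibres being detected by the nerve in the Thomason structure).

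I would prove the fibred Theorem~B by comparing, for a fibration $\pi \colon E \to C$ with invertible base changes, the strict fibre $E_c$ with the comma category $\cotr{E}{c}$: the canonical functor $E_c \to \cotr{E}{c}$ is a Thomason equivalence precisely because the base-change functors are, and $\cotr{E}{c}$ is the homotopy fibre of $\pi$ at $c$ by the stated Theorem~B (the invertibility of base changes also making $\pi$ colocally homotopically constant). Thus strict fibre equals homotopy fibre for $\pi$, and the same argument applied to $p$ gives strict fibre equals homotopy fibre for $p$; since both are $\cotr{B}{u(a)}$ on the nose, the fibre criterion is satisfied. The delicate points to watch are the naturality of all these identifications in $a$, so that they glue into the single comparison functor $u\comma v \to A \timesh_C B$ rather than a merely objectwise equivalence, and the verification that $E_c \to \cotr{E}{c}$ is a Thomason equivalence, which is the genuine content inherited from Quillen's original argument.
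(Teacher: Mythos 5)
Your proof is correct, but it takes a genuinely different route from the one the paper follows. You share the opening reduction: the identification $u \comma v \cong A \times_C E$ with $E = C \comma v$ is Proposition~\ref{prop:comma_fib_prod}, and your section $s : b \mapsto (v(b), b, \id{v(b)})$ with the natural transformation $\id{E} \Rightarrow s\rho$ is exactly the paper's deformation retract $j$ of paragraph~\ref{paragr:fact} and Proposition~\ref{prop:j_retr} (note that the replacement of $(B,v)$ by $(E,\pi)$ is cleanest through $s$, since $\pi s = v$ strictly, whereas the triangle through $\rho$ only commutes up to the transformation $\phi$). After that the arguments diverge. The paper proves that every strict pullback of $p_1 : C \comma v \to C$ is a homotopy pullback (Proposition~\ref{prop:lemma}) by feeding $N(p_1)$ into Rezk's simplicial Theorem~B (Proposition~\ref{prop:simpl_thmB}) and verifying the resulting condition over simplices of $NC$ by means of oplax\-/transformation retracts of orientals and the sesquifunctoriality of the comma construction; Theorem~B is then deduced as a corollary. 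You instead take Quillen's classical Theorem~B as given — which is legitimate here, since the statement in question is its generalization and the paper itself only cites Barwick--Kan and Cisinski for it — and exploit the Grothendieck fibration structure of $\pi$: the fibres are $\cotr{B}{c}$, the base changes are $\cotr{B}{g}$, the hypothesis on $v$ is precisely their weak invertibility, and the comparison $E_c \to \cotr{E}{c}$ together with the fibrewise criterion for homotopy pullbacks of simplicial sets finishes the job. Two small corrections: the functor $E_c \to \cotr{E}{c}$ is a Thomason equivalence for \emph{any} Grothendieck fibration, because it admits the right adjoint $(e,\phi) \mapsto \phi^{*}e$; the invertibility of the base changes is needed not for this step but to see that $\pi$ itself is colocally homotopically constant, so that Theorem~B applies to it. What your route buys is brevity and the avoidance of all simplicial input beyond the fibrewise criterion, at the price of assuming Theorem~B rather than reproving it; what the paper's route buys is that it generalizes: cartesian lifts and the adjunction between $E_c$ and $\cotr{E}{c}$ have no available analogue for strict \oo-categories and oplax transformations, which is exactly why the paper replaces them by Rezk's criterion and explicit contractions of orientals. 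Your argument is in substance the deduction from Cisinski's form of Theorem~B that the paper sketches in a footnote, transposed from the Grothendieck cofibration $u \comma C \to C$ to the dual fibration $C \comma v \to C$.
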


This result is also a consequence of a version of Quillen's Theorem B due to
\hbox{Cisinski} \cite[Theorem 6.4.15]{Cisinski}\footnote{More precisely,
this follows from condition $(vi)$ of the ``dual'' of Cisinski's theorem
applied to the canonical functor $u \comma C \to C$, which is a
  Grothendieck cofibration, so that any functor is transverse to it
in the sense of Cisinski (and from the fact that the canonical functor $A
\to u \comma C$ is a deformation retract and hence a Thomason
equivalence).}.
Barwick and Kan actually proved a similar result for (weak) $(\infty,
n)$-categories. Note that this generalization, for $n > 0$, is orthogonal to
our project as the weak equivalences used by Barwick and Kan are the
equivalences of $(\infty, n)$-categories and not the Thomason equivalences.
When $n = 0$, these two classes of weak equivalences coincide and the
above theorem is essentially the case $n = 0$.

The theorems of Quillen and Barwick--Kan for categories were generalized to
$2$\nbd-categories endowed with Thomason equivalences, and even
bicategories, by \hbox{Cegarra} \forlang{et al.}~\cite{CegThmB, CegThmBBicat,
CegComma}.

\medbreak

The purpose of this paper is to generalize these two theorems to strict
\oo-categories. We introduced, with Maltsiniotis, in our study of the
\oo-categorical Theorem A \cite{AraMaltsiThmAII}, a comma construction for
strict \oo-categories. As for categories, a special case of this
construction allows to define a slice construction and one can thus define
the notion of a colocally homotopically constant strict \oo-functor.
Therefore, using Street's nerve to define homotopy pullbacks, the
statements of these two theorems still make sense for strict \oo-categories
and our main result can be stated as:

\begin{theorem*}
  Consider a diagram
  \[
    \xymatrix{
      A \ar[r]^u & C & B \ar[l]_v
    }
  \]
  of $\ooCat$, where $v$ is colocally homotopically constant.
  Then the comma construction~$u \comma v$ is canonically the homotopy
  pullback $A \timesh_C B$.
\end{theorem*}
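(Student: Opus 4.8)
The plan is to isolate a single fibrational lemma and apply it twice. The lemma I would aim for is a \emph{Theorem~B for projections with locally constant fibers}: if $p$ is a strict $\infty$-functor that is an opfibration (in the appropriate strict sense, as for the comma projections of \cite{AraMaltsiThmAII}) and whose cocartesian transition functors between strict fibers are Thomason equivalences, then Street's nerve $N$ sends $p$ to a quasifibration of simplicial sets, so that each strict fiber of $p$ is canonically the homotopy fiber of $p$. Granting this lemma, I would first record the two geometric inputs: forming the canonical square with corner $u \comma v$, projections $p : u \comma v \to A$ and $q : u \comma v \to B$, and sides $u$ and $v$, one has that the strict fiber of $p$ over an object $a$ of $A$ is exactly the slice $\cotr{B}{u(a)}$; and, as $a$ runs through a morphism $a \to a'$ of $A$, the induced transition functor is $\cotr{B}{u(a\to a')}$, which is a Thomason equivalence precisely by the colocal homotopical constancy of $v$. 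Thus $p$ satisfies the hypotheses of the lemma.

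To obtain the \emph{absolute} Theorem~B along the way, I would apply the same analysis to the full comma $C \comma v$ (the comma of $\id_C$ with $v$), with its projections $r : C \comma v \to C$ and $s : C \comma v \to B$. As in \cite{AraMaltsiThmAII}, $s$ is a deformation retraction, with section sending $b$ to the object $(v(b), b, \mathrm{id})$, hence a Thomason equivalence, and under this equivalence $r$ is identified with $v$; moreover the strict fiber of $r$ over an object $c$ of $C$ is the slice $\cotr{B}{c}$, and the transition functors are the $\cotr{B}{f}$, which are Thomason equivalences by hypothesis. Applying the lemma to $r$ then gives that $N(\cotr{B}{c})$ is the homotopy fiber of $Nr$ at $c$, and the identification of $r$ with $v$ shows that $\cotr{B}{c}$ is the homotopy fiber of $v$ at $c$: this is the absolute Theorem~B.

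The heart of the matter, and the step I expect to be the main obstacle, is the proof of the lemma, i.e.\ that $Np$ is a quasifibration. Here I would run a descent argument over a skeletal/cellular filtration of the base, building $Np$ as a quasifibration by gluing from its restrictions over the successive attaching pushouts, in the spirit of Quillen's original quasifibration lemma. The gluing is legitimate exactly because the transition functors are Thomason equivalences, so that the comparison maps between fibers over the cells and over their boundaries are weak homotopy equivalences; and the attaching squares are turned by $N$ into homotopy pushouts thanks to the good behaviour of Street's nerve on the relevant elementary pushouts, as established in \cite{AraMaltsiNThom, Gagna}. The genuine difficulty is that $N$ commutes neither with the comma construction nor with strict pullbacks, so the quasifibration property cannot be deduced formally: one must control directly the homotopy type of the slices $\cotr{B}{c}$ and of their variation, reconciling the oplax nature of the comma of \cite{AraMaltsiThmAII} with the precise effect of $N$ on cellular attachments.

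Finally I would assemble the general statement. Applying the lemma to $p : u \comma v \to A$, each strict fiber $\cotr{B}{u(a)}$ is the homotopy fiber of $p$ at $a$; by the absolute Theorem~B the same slice is the homotopy fiber of $v$ at $u(a)$, and the canonical map $u \comma v \to A \timesh_C B$ restricts over each $a$ to this identification. Checking homotopy cartesianness fiberwise over $A$ then yields that $u \comma v \to A \timesh_C B$ is a Thomason equivalence, which is the asserted identification of the comma construction with the homotopy pullback.
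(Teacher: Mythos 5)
Your overall architecture is sound and in fact parallels the paper's: everything is reduced to a single lemma about a comma projection whose strict fibers are the slices $\cotr{B}{c}$ and whose transition functors are the $\cotr{B}{f}$, and both the absolute Theorem~B and the general statement are then extracted via factorizations such as $B \to C \comma v \to C$ and $A \to u \comma C \to C$ through deformation retracts (this is exactly how Theorem~\ref{thm:comma_homot} and Corollary~\ref{coro:comma_homot} are deduced from Proposition~\ref{prop:lemma}). The problem is that the lemma itself --- which you rightly call the heart of the matter --- is left essentially unproved, and your sketch of it rests on a misconception. You write that ``$N$ commutes neither with the comma construction nor with strict pullbacks''; but Street's nerve is a right adjoint (it is induced by a cosimplicial object), hence preserves all limits, and this preservation of strict fiber products is precisely the formal input the paper exploits throughout. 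What $N$ does \emph{not} do is turn the oriental $\On{m}$ into $\Deltan{m}$: the unit $\eta : \Deltan{m} \to N(\On{m})$ is not an isomorphism, so the pullback of $N(p)$ over a simplex $x : \Deltan{m} \to NA$ is \emph{not} the nerve of the $\infty$-categorical pullback over $\On{m}$. That comparison is the genuine obstruction, and your cellular-descent sketch never confronts it.

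Concretely, the paper verifies Rezk's criterion (Proposition~\ref{prop:simpl_thmB}): one must show that for each $x : \Deltan{m} \to NA$ and each vertex $i$, the inclusion of the fiber over $i$ into the pullback $P$ over $\Deltan{m}$ is a weak equivalence. The fiber over $i$ is the nerve of a slice by limit preservation, but $P$ sits strictly between it and the nerve of the pullback over $\On{m}$, and to handle $P$ one needs a deformation retraction of $\Deltan{m}$ onto its vertex $0$ that is \emph{compatible via $\eta$} with a strong oplax-transformation retract of $\On{m}$ onto its object $0$, transported to the comma by sesquifunctoriality. Producing that compatible contraction of the oriental is the hard technical content (the paper builds it with Steiner's theory in the appendix), and it has no counterpart in your gluing argument. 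Nor is it clear what ``opfibration'' should mean here so that a Dold--Thom-type criterion applies: in the $1$-categorical case $p_1 : u \comma v \to A$ is a Grothendieck \emph{fibration}, and for strict $\infty$-categories the paper deliberately replaces any such notion by oplax transformation retracts. Until the lemma is established --- by this route or another --- the proof is incomplete.
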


\goodbreak

This implies the Theorem B for strict \oo-categories:

\begin{theorem*}
  If $u : A \to B$ is a colocally homotopically constant strict \oo-functor,
  then, for every object $b$ of $B$, the \oo-category $\cotr{A}{b}$ is
  canonically the homotopy fiber of $u$ at $b$.
\end{theorem*}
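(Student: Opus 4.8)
The plan is to deduce this statement from the homotopy pullback theorem established just above, by specialising the latter to the case where the source of one of the two functors is the terminal \oo-category. First I would recall that, exactly as for ordinary categories, an object $b$ of $B$ is the same datum as a strict \oo-functor $b : \Dn{0} \to B$ from the terminal \oo-category $\Dn{0}$ (the $0$-disk), and that the slice \oo-category $\cotr{A}{b}$ is, by the very definition of the slice construction as a special case of the comma construction, the comma construction $b \comma u$ associated with the diagram $\Dn{0} \xto{b} B \xot{u} A$.

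Next I would apply the preceding theorem to this diagram. The functor playing the role of~$v$ is here $u : A \to B$, which is colocally homotopically constant by the hypothesis of the present statement, so the theorem applies and yields that the comma construction $b \comma u$ is canonically the homotopy pullback $\Dn{0} \timesh_B A$. It then remains to identify this homotopy pullback with the homotopy fiber of $u$ at~$b$. But this is essentially the definition of the homotopy fiber: whether understood in the Thomason model structure or, equivalently, after applying Street's nerve at the level of simplicial homotopy fibers, the homotopy fiber of $u$ at~$b$ is precisely the homotopy pullback of $A \xto{u} B$ along the point $b : \Dn{0} \to B$. Combining the two previous steps produces the desired canonical identification of $\cotr{A}{b} = b \comma u$ with the homotopy fiber of $u$ at~$b$.

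I do not expect any genuine obstacle in this argument: all of the homotopical content has already been absorbed into the homotopy pullback theorem, and the present statement is a formal specialisation of it. The only points requiring care — and they are routine — are the two identifications above, namely that $b \comma u$ really coincides with $\cotr{A}{b}$ and that $\Dn{0} \timesh_B A$ is exactly what we have called the homotopy fiber, together with the bookkeeping needed to check that the canonical comparison morphism furnished by the theorem agrees with the canonical comparison for the homotopy fiber.
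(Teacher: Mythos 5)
Your argument is exactly the paper's: the paper proves this statement as an immediate specialisation of the homotopy pullback theorem (Corollary~\ref{coro:comma_homot}), applied to the diagram $\Dn{0} \xto{b} B \xot{u} A$ with $u$ in the role of the colocally homotopically constant functor $v$, so that $\cotr{A}{b} = b \comma u$ is identified with $\Dn{0} \timesh_B A$, i.e.\ the homotopy fiber of $u$ at $b$. The identifications you flag as requiring care are indeed the only content, and they hold by the definitions in the paper.
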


{
\tolerance=500
The proof is inspired by Cisinski's proof of the original Quillen Theorem~B
\hbox{\cite[Theorem 6.4.15]{Cisinski}} and is in particular based on a
simplicial result due to Rezk that can be thought of as a simplicial version
of Quillen's Theorem B. Besides Rezk's result, our proof rely mainly on two
tools. First, we use the sesquifunctoriality of the comma construction that
we proved with \hbox{Maltsiniotis}
\hbox{\cite[Appendix~B]{AraMaltsiThmAII}}. Second, we use Steiner's theory
of augmented directed complexes~\cite{Steiner} to produce ``contractions''
of Street's orientals~\cite{StreetOrient}.
}

As an application of our Theorem B, we prove the following statement
about loop spaces of strict \oo-categories:

\begin{theorem*}
  Let $A$ be a strict \oo-category endowed with an object $a$. Suppose that for
  every $1$\nbd-cell $f : a' \to a''$ the induced \oo-functor $\Homi_A(a'',
  a) \to \Homi_A(a', a)$ is a Thomason equivalence. Then
  $\Homi_A(a,a)$ is a model for the loop space of $(A, a)$.
\end{theorem*}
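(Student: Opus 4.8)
The plan is to deduce this from our Theorem~B for strict \oo-categories, applied to the \oo-functor $v : \mathbf{1} \to A$ of value $a$, where $\mathbf{1}$ denotes the terminal \oo-category, together with the object $a$ of $A$. The first task is to set up the dictionary between the coslices associated to $v$ and the hom-\oo-categories of $A$. Unwinding the Ara--Maltsiniotis comma construction in the degenerate case where the source \oo-functor is the point, the coslice $\cotr{\mathbf{1}}{a'}$ attached to an object $a'$ of $A$ has for cells those cells of $A$ whose $0$\nbd-source is $a'$ and whose $0$\nbd-target is $a$; this yields a canonical isomorphism $\cotr{\mathbf{1}}{a'} \cong \Homi_A(a', a)$. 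Under this identification, and for a $1$\nbd-cell $f : a' \to a''$ of $A$, the induced \oo-functor $\cotr{\mathbf{1}}{f} : \cotr{\mathbf{1}}{a''} \to \cotr{\mathbf{1}}{a'}$ becomes exactly the precomposition \oo-functor $\Homi_A(a'', a) \to \Homi_A(a', a)$, $g \mapsto g f$. Checking that the variances and the cell structure of the comma construction match in this way is the only point that requires genuine care, and it is where I expect the main technical work to lie.

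Granting this dictionary, the hypothesis of the theorem asserts precisely that every \oo-functor $\cotr{\mathbf{1}}{f}$ is a Thomason equivalence, that is, that $v$ is colocally homotopically constant. I would then apply Theorem~B to $v$ and to the object $a$, obtaining that the coslice $\cotr{\mathbf{1}}{a} \cong \Homi_A(a, a)$ is canonically the homotopy fiber of $v$ at $a$.

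It remains to identify this homotopy fiber with the loop space. Since $v$ is the inclusion of the point at $a$, its Street nerve $Nv$ is the map $N(\mathbf{1}) \to N(A)$ from the point selecting the vertex $a$, and the simplicial homotopy fiber of such a map over $a$ is a model for the loop space $\Omega(N(A), a)$. As the homotopy fiber in Theorem~B is computed as the simplicial homotopy fiber of $Nv$, this shows that $\Homi_A(a, a)$ is a model for the loop space of $(A, a)$, as desired. Equivalently, one may instead apply the homotopy pullback form of the main theorem to the cospan $\mathbf{1} \xto{v} A \xot{v} \mathbf{1}$, whose right-hand leg $v$ is colocally homotopically constant, thereby identifying the comma construction $v \comma v \cong \Homi_A(a, a)$ with the homotopy pullback $\mathbf{1} \timesh_A \mathbf{1}$, which is manifestly a model for the loop space of $(A, a)$.
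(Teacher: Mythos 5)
Your overall strategy --- apply the \oo-categorical Theorem B (or its homotopy-pullback form) to the \oo-functor $a : \Dn{0} \to A$ and identify the resulting coslices with the hom-\oo-categories of $A$ --- is exactly the one the paper follows. But the dictionary that you yourself single out as ``the only point that requires genuine care'' is precisely where the argument breaks: the canonical isomorphism is
\[ \cotr{\Dn{0}}{a'} \;=\; a' \comma a \;\simeq\; \Homi_A(a', a)^\o, \]
not $a' \comma a \simeq \Homi_A(a', a)$. The underlying cells are indeed the cells of $A$ with $0$\nbd-source $a'$ and $0$\nbd-target $a$, but the comma construction is built from $\HomLax(\Dn{1}, A)$, and the lax/oplax conventions reverse the orientation of every cell in positive dimension; this is the content of the lemma the paper imports from the joint work with Maltsiniotis. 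Consequently, what Theorem B and the homotopy-pullback corollary give directly is the \emph{dualized} statement: if the \oo-functors $\Homi_A(a'', a)^\o \to \Homi_A(a', a)^\o$ are Thomason equivalences, then $\Homi_A(a,a)^\o$ is a model for the loop space of $(A,a)$. That is how the theorem is actually stated and proved in the body of the paper.

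Passing from the dualized statement to the one you were asked to prove is not free. One needs to know that the duality $C \mapsto C^\o$ preserves Thomason equivalences (and that $NC$ and $N(C^\o)$ have the same homotopy type), both to see that the given hypothesis implies that $a : \Dn{0} \to A$ is colocally homotopically constant --- which requires the maps \emph{with} the $(-)^\o$ to be Thomason equivalences --- and to replace $\Homi_A(a,a)^\o$ by $\Homi_A(a,a)$ in the conclusion. The paper explicitly defers this duality-invariance to forthcoming work and records the undualized version only as a remark. So your proof, as written, asserts a false isomorphism and then silently uses the deferred duality result twice; to repair it, either prove the dualized version (which your argument does once the isomorphism is corrected) or supply the missing input about $(-)^\o$.
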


Using this theorem and a particular case of the Theorem A for strict
\oo-categories~\cite{AraMaltsiThmAI,AraMaltsiThmAII}, that we deduce from
Theorem B, we obtain new models for some Eilenberg--Mac Lane spaces:

\begin{theorem*}
  Let $\pi$ be a commutative ordered group whose underlying poset is
  directed. Denote by $\pi^+$ its monoid of positive elements. Then, for any
  $n \ge 1$, the \oo-category $\Kcat{\pi^+}{n}$ is a $\Ktop{\pi}{n}$.
\end{theorem*}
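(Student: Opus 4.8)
The plan is to proceed by induction on $n$, using the loop space theorem above for the inductive step and the announced special case of Theorem~A for the base case. Write $N$ for Street's nerve. Recall that $\Kcat{\pi^+}{n}$ has a single cell in each dimension $< n$ and has $\pi^+$ as its monoid of $n$-cells (this is a well-defined strict \oo-category precisely because $\pi^+$ is commutative, by an Eckmann--Hilton argument); denote by $a$ its unique $0$-cell. Since $a$ is the only object, $N(\Kcat{\pi^+}{n})$ is connected for every $n$. I will use freely that for an abelian group $G$ the nerve $N(\Kcat{G}{1})$ is a $\Ktop{G}{1}$, and that the special case of Theorem~A asserts that a strict \oo-functor all of whose comma \oo-categories are Thomason-contractible is a Thomason equivalence.

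For the base case $n = 1$, the \oo-category $\Kcat{\pi^+}{1}$ is the one-object category with endomorphism monoid $\pi^+$, and I claim that the canonical inclusion $u : \Kcat{\pi^+}{1} \to \Kcat{\pi}{1}$ is a Thomason equivalence. Over the unique object $\star$ of $\Kcat{\pi}{1}$, the comma category $\star \comma u$ has the elements of $\pi$ as objects, with a unique arrow from $g$ to $g'$ whenever $g' - g \in \pi^+$ and none otherwise; thus $\star \comma u$ is exactly the poset $(\pi, \le)$. As $\pi$ is directed by hypothesis, this poset is filtered, so its nerve is contractible, and the special case of Theorem~A shows that $u$ is a Thomason equivalence. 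Therefore $N(\Kcat{\pi^+}{1}) \simeq N(\Kcat{\pi}{1})$ is a $\Ktop{\pi}{1}$.

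For the inductive step, fix $n \ge 2$, assume the result for $n-1$, and set $A = \Kcat{\pi^+}{n}$. The only $1$-cell of $A$ is the identity $1_a$, so the hypothesis of the loop space theorem holds vacuously: the single induced \oo-functor $\Homi_A(a,a) \to \Homi_A(a,a)$ is the identity. Unwinding the cell structure of the iterated delooping identifies $\Homi_A(a,a)$ canonically with $\Kcat{\pi^+}{n-1}$. The loop space theorem then exhibits $N(\Kcat{\pi^+}{n-1})$ as the loop space $\Omega N(A)$ at $a$, while the induction hypothesis gives $N(\Kcat{\pi^+}{n-1}) \simeq \Ktop{\pi}{n-1}$. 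Hence $\Omega N(A) \simeq \Ktop{\pi}{n-1}$; since $N(A)$ is connected, the isomorphisms $\pi_k(N(A)) \cong \pi_{k-1}(\Omega N(A))$ for $k \ge 1$ show that the homotopy groups of $N(A)$ are $\pi$ in degree $n$ and trivial otherwise, i.e.\ $N(A)$ is a $\Ktop{\pi}{n}$.

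The conceptual core is the recursion $\Omega \Kcat{\pi^+}{n} \simeq \Kcat{\pi^+}{n-1}$ supplied by the loop space theorem, with Theorem~A providing the group-completion statement at the bottom of the induction. Accordingly, the two points that require care are, first, the identification of the base-case comma category with the directed poset $(\pi, \le)$ together with the verification that directedness forces its nerve to be contractible, and second, the canonical identification $\Homi_{\Kcat{\pi^+}{n}}(a,a) \cong \Kcat{\pi^+}{n-1}$, whose justification rests on a careful description of the cells of the iterated delooping and on the Eckmann--Hilton argument guaranteeing that these deloopings are genuine strict \oo-categories. I expect this latter bookkeeping, rather than any homotopical input, to be the main obstacle.
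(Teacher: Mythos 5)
Your argument is correct, and it reaches the conclusion by a genuinely different route than the paper. The paper proves the statement by showing that the inclusion $\Kcat{\pi^+}{n} \to \Kcat{\pi}{n}$ is a Thomason equivalence for every $n$ and then invoking Theorem~\ref{thm:Kpn}; to apply Theorem~A (Theorem~\ref{thm:thmA}) it must show that the slice $\cotr{(\Kcat{\pi^+}{n})}{\ast}$ is aspherical for \emph{all} $n$, and that asphericity is what is proven by induction, via a concrete description of the higher slices, the identification $\Homi_{\cotr{(\Kcat{\pi^+}{n})}{\ast}}(\ast, \ast)^\o \simeq \cotr{(\Kcat{((\pi^\o)^+)}{n-1})}{\ast}$, and the loop space theorem applied to the slice. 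You instead run the induction directly on the delooping: Theorem~A is invoked only once, at $n=1$ (where the slice is the preorder $(\pi, \le)$, filtered by directedness, hence with contractible nerve), and for $n \ge 2$ you apply Theorem~\ref{thm:loop} to $\Kcat{\pi^+}{n}$ itself together with $\pi_k(X) \cong \pi_{k-1}(\Omega X)$ --- which is exactly the mechanism of the paper's proof of Theorem~\ref{thm:Kpn}, transported from $\pi$ to the monoid $\pi^+$. Your version avoids the concrete description of the slices for $n>1$ and the $\pi^\o$ bookkeeping; the paper's version yields the stronger intermediate statement that $\Kcat{\pi^+}{n} \to \Kcat{\pi}{n}$ is a Thomason equivalence for every $n$ (a group-completion statement) and is phrased so as to work under the weaker hypothesis that the poset $\pi$ is merely aspherical (your base case would go through under that hypothesis as well). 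Two cosmetic points: the loop space theorem is stated in the body of the paper (Theorem~\ref{thm:loop}) with the duality $({-})^\o$ on the $\Homi$'s, so you should note that $\Kcat{M}{n}^\o \simeq \Kcat{M}{n}$ for $M$ commutative (all cells are endocells of the unique lower-dimensional cell and the composition is commutative) --- the paper itself uses this tacitly in the proof of Theorem~\ref{thm:Kpn}; and the Eckmann--Hilton constraint forcing commutativity of the monoid is only relevant for $n \ge 2$, since $\Kcat{M}{1}$ exists for any monoid.
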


In this statement, $\Kcat{\pi^+}{n}$ denotes the obvious strict $n$-category
having only one $i$-cell for $0 \le i < n$ and whose set of $n$-cells is
$\pi^+$. In particular, we get that $\Kcat{\N}{n}$ is~a~$\Ktop{\Z}{n}$.

\medskip

Our paper is organized as follows. The first section contains simplicial
preliminaries and in particular Rezk's simplicial Theorem B. In the
second section, we introduce the \oo-categorical notions that we need. We
recall the main properties of the Gray tensor product and of oplax
transformations.  We give a brief overview of the theory of comma
\oo-categories that we introduced with Maltsiniotis in
\cite{AraMaltsiThmAII}. We use this theory to define slice \oo-categories
and a kind of mapping space factorization for strict \oo-functors. The third
section contains the main results of the paper. We introduce the notion of
Thomason equivalences,  homotopy pullback squares in $\ooCat$ and colocally
homotopically constant strict \oo-functors. We study the homotopical
behavior of these \oo-functors under base change, from which we deduce our
main theorems, including the Theorem B for strict \oo-categories. The fourth
section contains several applications of our Theorem B. We prove the
non-relative case of the Theorem A for strict \oo-categories. We then apply
Theorem B to study loop spaces of strict \oo-categories. Using our results,
we produce new models for certain Eilenberg--Mac Lane spaces. We end the
section with an application to loop spaces of strict \oo-groupoids. Finally,
in an appendix, we produce, using Steiner's theory \cite{Steiner}, the
``contractions'' of Street's orientals \cite{StreetOrient} needed in the
proof of the main result.

\section{Simplicial preliminaries}

\begin{paragraph}
  We will denote by $\cDelta$ the \ndef{simplex category}. Recall that it is the
  full subcategory of the category of ordered sets whose objects are the
  \[ \Deltan{n} = \{0 < \cdots < n \} \]
  for $n \ge 0$. The category of \ndef{simplicial sets}, that is, of presheaves
  over $\cDelta$, will be denoted by $\pref{\cDelta}$. The Yoneda embedding
  $\cDelta \hookto \pref{\cDelta}$ will always be considered as an
  inclusion.
\end{paragraph}

\begin{paragraph}\label{paragr:simpl_we}
  By a \ndef{weak equivalence} of simplicial sets, we will always mean a
  weak homotopy equivalence, that is, a weak equivalence of the
  Kan--Quillen model category structure. Similarly, by a homotopy pullback
  square of simplicial sets, we will always mean a homotopy pullback square
  for the Kan--Quillen model category structure.
\end{paragraph}

\goodbreak

The following proposition can be considered as a kind of simplicial Theorem
B and will be crucial to our proof of the \oo-categorical Theorem B:

\begin{proposition}[Rezk]\label{prop:simpl_thmB}
  Let $p : X \to Y$ be a morphism of simplicial sets. The following
  conditions are equivalent:
  \begin{enumerate}
    \item\label{cond:pullback} every pullback square
      \[
        \xymatrix{
          X' \pullbackcorner \ar[r] \ar[d] & X \ar[d]^p \\
          Y' \ar[r] & Y
        }
      \]
      is a homotopy pullback square,
    \item\label{cond:fib} for every diagram of pullback squares
      \[
        \xymatrix{
          X'' \pullbackcorner \ar[d] \ar[r]^{u'} & X' \pullbackcorner \ar[r] \ar[d] &
            X \ar[d]^p \\
          Y'' \ar[r]_u & Y' \ar[r] & Y \pbox{,}
        }
      \]
      if $u$ is a weak equivalence, then so is $u'$,
    \item\label{cond:loc_const} for every diagram of pullback squares
      \[
        \xymatrix{
          X'' \pullbackcorner \ar[d] \ar[r]^{u'} & X' \pullbackcorner \ar[r] \ar[d] &
            X \ar[d]^p \\
          \Deltan{n} \ar[r] & \Deltan{m} \ar[r] & Y \pbox{,}
        }
      \]
      the morphism $u'$ is a weak equivalence,
    \item\label{cond:loc_const_zero} for every diagram of pullback squares
      \[
        \xymatrix{
          X'' \pullbackcorner \ar[d] \ar[r]^{u'} & X' \pullbackcorner \ar[r] \ar[d] &
            X \ar[d]^p \\
          \Deltan{0} \ar[r] & \Deltan{m} \ar[r] & Y \pbox{,}
        }
      \]
      the morphism $u'$ is a weak equivalence.
  \end{enumerate}
\end{proposition}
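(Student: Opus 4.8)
The plan is to prove the equivalence of the four conditions by establishing the cycle of implications
\[
  \eqref{cond:pullback} \Rightarrow \eqref{cond:fib} \Rightarrow \eqref{cond:loc_const} \Rightarrow \eqref{cond:loc_const_zero} \Rightarrow \eqref{cond:pullback}.
\]
Two of these are essentially formal. The implication $\eqref{cond:fib} \Rightarrow \eqref{cond:loc_const}$ is a direct specialization: the squares over $\Deltan{n} \to \Deltan{m} \to Y$ are a particular case of squares over $Y'' \to Y' \to Y$ in which $Y'' = \Deltan n$ and $Y' = \Deltan m$, and since the unique map $\Deltan n \to \Deltan m$ is automatically a weak equivalence (both are contractible), condition \eqref{cond:fib} forces $u'$ to be a weak equivalence. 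Likewise $\eqref{cond:loc_const} \Rightarrow \eqref{cond:loc_const_zero}$ is immediate since the latter is the special case $n = 0$ of the former. The implication $\eqref{cond:pullback} \Rightarrow \eqref{cond:fib}$ uses the two-out-of-three principle for homotopy pullbacks: if the right-hand and the outer rectangles are homotopy pullbacks (which \eqref{cond:pullback} guarantees since every strict pullback along $p$ is a homotopy pullback), then the left-hand square is a homotopy pullback too, and a homotopy pullback square in which the bottom map $u$ is a weak equivalence has its top map $u'$ a weak equivalence as well.

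\textbf{The substantial implication.} The real content is $\eqref{cond:loc_const_zero} \Rightarrow \eqref{cond:pullback}$, which promotes a condition tested only on the simplices of $Y$ to a statement about arbitrary pullbacks. First I would reduce \eqref{cond:loc_const_zero} to the assertion that for every pullback square over a map $\Deltan m \to Y$, the fibers $X'_x$ of $X' \to \Deltan m$ over the various vertices $x$ of $\Deltan m$ are all connected by weak equivalences; concretely, \eqref{cond:loc_const_zero} says precisely that the base change of $X \to Y$ along $\Deltan m \to Y$ is, up to weak equivalence, constant on the fiber over any chosen vertex. The strategy is then to build an arbitrary pullback $X' \to Y'$ as a colimit of such simplexwise pullbacks and to check that the constancy of fibers, together with the fact that weak equivalences of simplicial sets are stable under the relevant colimits, forces the outer square to be a homotopy pullback.

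\textbf{Carrying out the colimit argument.} Concretely, I would factor any map $Y' \to Y$ through its decomposition as a colimit of the representables mapping into $Y'$, i.e. write $Y'$ as $\limind_{\Deltan m \to Y'} \Deltan m$ over its category of simplices. Pulling $X$ back first to $Y$ along $Y' \to Y$ and then restricting along each simplex $\Deltan m \to Y' \to Y$, condition \eqref{cond:loc_const_zero} tells us each such restricted projection has fibers all weakly equivalent to the fiber over a vertex. The key technical step is to recognize this uniform fiberwise weak equivalence as saying that $X \to Y$ becomes, after base change to each simplex, a map whose homotopy type is ``locally constant'' in Rezk's sense, and then to invoke the gluing/descent properties of the Kan--Quillen structure: since homotopy pullbacks in $\EnsSimp$ are detected after pulling back to each simplex, and the comparison map from the actual pullback $X'$ to the homotopy pullback $A \timesh_{Y} Y'$ is a weak equivalence on each simplex by \eqref{cond:loc_const_zero}, it is a weak equivalence. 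I expect the main obstacle to be this gluing step: one must verify carefully that ``homotopy pullback'' is a local condition with respect to the simplices of the base — that a square is a homotopy pullback iff its restriction over every $\Deltan m \to Y$ is — which relies on the fact that the category of simplices is homotopy-cofinal and on the left-properness and cofibrant-generation of the Kan--Quillen structure. The connectedness bookkeeping (ensuring the maps induced by the edges $\Deltan 1 \to Y$ glue the vertex fibers coherently) is where the argument is most delicate, and it is precisely here that testing on $\Deltan 0 \hookrightarrow \Deltan m$ in \eqref{cond:loc_const_zero} rather than on a general $\Deltan n$ suffices, since every simplex deformation-retracts onto any of its vertices.
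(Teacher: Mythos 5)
Your three ``easy'' implications are fine: the pasting argument for \ref{cond:pullback}~$\Rightarrow$~\ref{cond:fib}, the specialization for \ref{cond:fib}~$\Rightarrow$~\ref{cond:loc_const} (every map $\Deltan{n}\to\Deltan{m}$ --- not ``the unique'' one --- is a weak equivalence between contractible objects), and the trivial \ref{cond:loc_const}~$\Rightarrow$~\ref{cond:loc_const_zero}. The genuine gap is in \ref{cond:loc_const_zero}~$\Rightarrow$~\ref{cond:pullback}, where you have correctly located all the difficulty but not resolved it. Your argument hinges on the assertion that a pullback square over $Y'$ is a homotopy pullback as soon as its restrictions over all simplices $\Deltan{m}\to Y'$ are, i.e.\ on writing $Y'=\limind\Deltan{m}$ over its category of simplices, knowing that this colimit and the corresponding colimit of the fibers $X'|_{\Deltan{m}}$ are \emph{homotopy} colimits, and then invoking universality/descent for homotopy colimits in the Kan--Quillen model structure. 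None of this is routine verification: it is precisely the content of Rezk's theorem that this proposition repackages (his Proposition~2.7 and Theorem~4.1 in \emph{Fibrations and homotopy colimits of simplicial sheaves}), and the paper does not reprove it either --- it cites it. A proof ``by hand'' would have to establish at least (i) that colimits over the category of simplices of the representables compute homotopy colimits (last-vertex / Bousfield--Kan type argument), and (ii) the descent statement that pulling back a homotopy colimit cocone along $X\to Y$ yields a homotopy colimit cocone; as written, your proposal asserts (ii) and defers it to ``gluing/descent properties,'' which is where the theorem lives.

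There is also a cleaner way to handle the reduction from condition~\ref{cond:loc_const} to condition~\ref{cond:loc_const_zero} than your appeal to deformation retractions of simplices onto vertices, and it is the only part of the argument the paper carries out in full: given a diagram over $\Deltan{n}\to\Deltan{m}\to Y$, precompose with a vertex $0:\Deltan{0}\to\Deltan{n}$ to get a three-fold diagram of pullback squares over $\Deltan{0}\to\Deltan{n}\to\Deltan{m}\to Y$; condition~\ref{cond:loc_const_zero} makes both $u''$ and $u'u''$ weak equivalences, and two-out-of-three gives that $u'$ is one. This reduces everything to proving \ref{cond:loc_const}~$\Rightarrow$~\ref{cond:pullback} (and \ref{cond:pullback}~$\Leftrightarrow$~\ref{cond:fib}), which is exactly the cited result of Rezk. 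If your intent is a self-contained proof, the descent step must be supplied; if citation is allowed, the colimit machinery in your last paragraph should be replaced by the reference, and the two-out-of-three trick above is all that remains to be proved.
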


\begin{proof}
  The equivalence between conditions~\ref{cond:pullback} and \ref{cond:fib}
  follows from \cite[Proposition 2.7]{RezkFib}. The equivalence between
  these two conditions and condition \ref{cond:loc_const} is a consequence
  of \cite[Theorem 4.1]{RezkFib} (see \cite[Remark 4.2]{RezkFib}). Clearly,
  condition~\ref{cond:loc_const} implies condition~\ref{cond:loc_const_zero}
  and it suffices to prove the converse. Consider a diagram of pullback
  squares as in condition~\ref{cond:loc_const} and form the diagram of
  pullback squares
  \[
    \xymatrix{
      X''' \pullbackcorner \ar[r]^{u''} \ar[d] &
      X'' \pullbackcorner \ar[d] \ar[r]^{u'} &
       X' \pullbackcorner \ar[r] \ar[d] & X \ar[d]^p \\
      \Deltan{0} \ar[r]_0 &
      \Deltan{n} \ar[r] & \Deltan{m} \ar[r] & Y \pbox{.}
    }
  \]
  By condition~\ref{cond:loc_const_zero}, the morphisms $u''$ and $u'u''$
  are weak equivalences. This implies that $u'$ is a weak equivalence,
  thereby proving the result.
\end{proof}

\goodbreak

We end the section with a probably well-known fact about fiber products of
strong deformation retracts.

\begin{paragraph}
  By a \ndef{strong left deformation retract}, we will mean a
  triple
  \[ (i : A \to B, r : B \to A, h : \Deltan{1} \times B \to B) \]
  of simplicial maps such that
  \begin{enumerate}
    \item $r$ is a retraction of $i$ (so that $ri = \id{A})$,
    \item $h$ is a homotopy from $ir$ to $\id{B}$,
    \item we have $h(\Deltan{1} \times i) = ip_2$, where $p_2 : \Deltan{1}
      \times A \to A$ denotes the second projection.
  \end{enumerate}
  If the homotopy $h$ goes from $\id{B}$ to $ir$, instead of going from $ir$
  to $\id{B}$, we will talk of a \ndef{strong right deformation retract}.
\end{paragraph}

\begin{proposition}\label{prop:fib_prod_retr}
  Let
  \[
   (i_k : A_k \to B_k, r_k : B_k \to A_k, h_k : \Deltan{1} \times B_k \to
   B_k),
  \]
  for $k = 0,1,2$, be three left \resp{right} strong deformation retracts.
  If $f_0, f_1, g_0, g_1$ are morphisms such that the diagrams
  \[
    \xymatrix@C=3pc{
      A_0 \ar[r]^{f_0} \ar[d]_{i_0} & A_2 \ar[d]_{i_2} & A_1 \ar[l]_{f_1}
      \ar[d]^{i_1}
      \\
      B_0 \ar[r]_{g_0} & B_2 & B_1 \ar[l]^{g_1}
    }
  \]
  and
  \[
    \xymatrix@C=3pc{
      \Deltan{1} \times B_0 \ar[d]_{h_0} \ar[r]^-{\Deltan{1} \times g_0}
      &
      \Deltan{1} \times B_2 \ar[d]_{h_2}
      &
      \Deltan{1} \times B_1 \ar[d]^{h_1} \ar[l]_-{\Deltan{1} \times g_1}
      \\
      B_0 \ar[r]_{g_0} & B_2 & B_1 \ar[l]^{g_1}
    }
  \]
  commute, then
  \[
    \begin{split}
    \big(&i_0 \times_{i_2} i_1 : A_0 \times_{A_2} A_1 \to B_0 \times_{B_2} B_1,
    \\
     & r_0 \times_{r_2} r_1 : B_0 \times_{B_2} B_1 \to  A_0 \times_{A_2} A_1,
     \\
     & h_0 \times_{h_2} h_1 : \Deltan{1} \times (B_0 \times_{B_2} B_1) \to
      B_0 \times_{B_2} B_1\big)
    \end{split}
  \]
  is a strong left \resp{right} deformation retract.
\end{proposition}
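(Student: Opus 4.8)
The plan is to handle everything through the functoriality of the fibre product, together with the fact that, $\EnsSimp$ being cartesian closed, the functor $\Deltan{1} \times \var$ has a right adjoint and hence preserves all limits. Concretely, I would first organize the data over the pullback shape $\mathcal{J} = (0 \to 2 \leftarrow 1)$: the morphisms $f_0, f_1$ (resp. $g_0, g_1$) make the $A_k$ (resp. $B_k$) into a diagram $A_\bullet$ (resp. $B_\bullet$) of shape $\mathcal{J}$, whose limit is $A_0 \times_{A_2} A_1$ (resp. $B_0 \times_{B_2} B_1$). The commutativity of the first square in the statement says exactly that the $i_k$ assemble into a natural transformation $i_\bullet : A_\bullet \to B_\bullet$, whose image under $\lim$ is $i_0 \times_{i_2} i_1$; likewise, the commutativity of the second square says that the $h_k$ assemble into a natural transformation $h_\bullet : \Deltan{1} \times B_\bullet \to B_\bullet$. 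Using that $\Deltan{1} \times \var$ commutes with $\lim$, I identify $\lim(\Deltan{1} \times B_\bullet)$ with $\Deltan{1} \times (B_0 \times_{B_2} B_1)$ and set $h_0 \times_{h_2} h_1 := \lim h_\bullet$.

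The one point that is not purely formal, and which I expect to be the main obstacle, is to produce the map $r_0 \times_{r_2} r_1$: this requires knowing that the $r_k$ too form a natural transformation $r_\bullet : B_\bullet \to A_\bullet$, i.e. that $f_0 r_0 = r_2 g_0$ and $f_1 r_1 = r_2 g_1$, which is \emph{not} among the hypotheses and must be derived. For the first identity I would evaluate the naturality square $g_0 h_0 = h_2(\Deltan{1} \times g_0)$ at the vertex of $\Deltan{1}$ where the homotopies restrict to $i_k r_k$ (the same vertex for all three, since all are left deformation retracts): on one hand this gives $g_0 i_0 r_0 = i_2 f_0 r_0$, using $g_0 i_0 = i_2 f_0$; on the other hand, by naturality of the vertex inclusion, it gives $i_2 r_2 g_0$. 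Hence $i_2 f_0 r_0 = i_2 r_2 g_0$, and composing on the left with $r_2$ (recall $r_2 i_2 = \id{A_2}$) yields $f_0 r_0 = r_2 g_0$; the identity $f_1 r_1 = r_2 g_1$ is obtained symmetrically. This gives $r_\bullet$, and hence $r_0 \times_{r_2} r_1 := \lim r_\bullet$.

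It then remains to check the three axioms, and all of them follow by applying $\lim$ (together with the identification above for the homotopy) to identities that hold componentwise. For axiom (a), since $r_k i_k = \id{A_k}$ for each $k$, the composite $r_\bullet i_\bullet$ is the identity of $A_\bullet$, so $\lim(r_\bullet i_\bullet) = (r_0 \times_{r_2} r_1)(i_0 \times_{i_2} i_1)$ is the identity. For axiom (b), the vertex inclusions assemble into natural transformations $B_\bullet \to \Deltan{1} \times B_\bullet$, and componentwise $h_\bullet$ restricts to $i_\bullet r_\bullet$ at one vertex and to $\id{B_\bullet}$ at the other; applying $\lim$ shows that $h_0 \times_{h_2} h_1$ is a homotopy from $(i_0 \times_{i_2} i_1)(r_0 \times_{r_2} r_1)$ to the identity. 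For axiom (c), componentwise $h_k(\Deltan{1} \times i_k) = i_k p_2$, so the natural transformations $h_\bullet(\Deltan{1} \times i_\bullet)$ and $i_\bullet \circ p_2$ agree; applying $\lim$ and using that $\lim$ of the projections $\Deltan{1} \times A_\bullet \to A_\bullet$ is the projection $p_2 : \Deltan{1} \times (A_0 \times_{A_2} A_1) \to A_0 \times_{A_2} A_1$ gives the desired identity. The case of right deformation retracts is identical, evaluating at the opposite vertex of $\Deltan{1}$ throughout.
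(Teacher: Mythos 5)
Your proposal is correct and follows essentially the same route as the paper: the only non-formal point is the well-definedness of $r_0 \times_{r_2} r_1$, which you obtain, exactly as in the paper, by evaluating the homotopy-compatibility squares at the appropriate vertex of $\Deltan{1}$ and then cancelling $i_2$ (your left composition with $r_2$ is just the paper's observation that $i_2$ is a split monomorphism). The remaining verifications by functoriality of the limit, including the identification $\lim(\Deltan{1} \times B_\bullet) \simeq \Deltan{1} \times (B_0 \times_{B_2} B_1)$, match what the paper leaves implicit.
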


\begin{proof}
  We only need to check that $r_0 \times_{r_2} r_1$ is well-defined; the
  fact that the triple of the statement is a strong left \resp{right}
  deformation retract will then follow by functoriality of the fiber
  product. Evaluating the second diagram of the statement at~$0$
  \resp{at~$1$} in $\Deltan{1}$, we get that the diagram
  \[
    \xymatrix@C=3pc{
      B_0 \ar[d]_{i_0r_0} \ar[r]^-{g_0}
      &
      B_2 \ar[d]_{i_2r_2}
      &
      B_1 \ar[d]^{i_1r_1} \ar[l]_-{g_1}
      \\
      B_0 \ar[r]_{g_0} & B_2 & B_1 \ar[l]^{g_1}
    }
  \]
  commutes. The fact that $i_2$ is a monomorphism and that the first
  diagram of the statement commutes implies then that the diagram
  \[
    \xymatrix@C=3pc{
      B_0 \ar[r]^{g_0} \ar[d]_{r_0} & B_2 \ar[d]_{r_2} & B_1 \ar[l]_{g_1}
      \ar[d]^{r_1}
      \\
      A_0 \ar[r]_{f_0} & A_2 & A_1 \ar[l]^{f_1}
    }
  \]
  commutes as well, thereby ending the proof.
\end{proof}

\section{Preliminaries on oplax transformations and comma \pdfoo-categories}

\begin{paragraph}\label{paragr:notation}
  We will denote by $\ooCat$ the category of strict \oo-categories and
  strict \oo-functors. All the \oo-categories and \oo-functors considered in
  this paper will be strict, and we will drop the adjective ``strict'' from
  now on.

  If $C$ is an \oo-category, we will denote by $C^\o$ the \oo-category
  obtained from $C$ by reversing all the $i$-cells for $i > 0$.

  We will denote by $\Dn{0}$ the terminal \oo-category and by $\Dn{1}$ the
  \oo-category associated to the category defined by the ordered set
  $\Deltan{1} = \{0 < 1\}$. We have two \oo-functors $\sigma, \tau : \Dn{0}
  \to \Dn{1}$ corresponding respectively to the objects $0$ and $1$ of
  $\Dn{1}$ and a unique \oo-functor $\kappa : \Dn{1} \to \Dn{0}$.
\end{paragraph}

We begin the section with some preliminaries on the Gray tensor product and
oplax transformations.

\begin{paragraph}
  The category $\ooCat$ is endowed with a biclosed monoidal category structure
  given by the so-called \ndef{Gray tensor product}, first introduced
  in~\cite{AlAglSteiner}. This tensor product is a generalization of the
  tensor product of $2$-categories introduced by Gray in~\cite{GrayFCT}. We
  will not need its precise definition and we will only recall the
  properties we will need. We refer the reader to \cite[Appendix
  A]{AraMaltsiJoint} for a comprehensive presentation in the spirit of our
  paper. If $A$ and $B$ are two \oo-categories, their (Gray) tensor product
  will be denoted by $A \otimes B$. For instance, one has
  \[
    \shorthandoff{;:}
    \Dn{1} \otimes \Dn{1} \simeq
    \raisebox{1.6pc}{
    $\xymatrix@C=2.5pc@R=2.5pc{
      \labelstyle (0,0)
      \ar[d]
      \ar[r]
      &
      \labelstyle (0, 1)
      \ar[d]
      \\
      \labelstyle (1, 0)
      \ar[r]
      &
      \labelstyle (1, 1)
      \ar@{}[u];[l]_(.30){}="x"
      \ar@{}[u];[l]_(.70){}="y"
      \ar@2"x";"y"
    }$
    }.
  \]
  The unit of this tensor product is
  the terminal \oo-category $\Dn{0}$.  The right and left internal $\Hom$
  will be denoted by $\HomOpLax$ and $\HomLax$ respectively, so that we have
  bijections
  \[
      \Hom_{\ooCat}(A \otimes B, C)
      \simeq \Hom_{\ooCat}(A, \HomOpLax(B, C))
  \]
  and
  \[
      \Hom_{\ooCat}(A \otimes B, C)
      \simeq \Hom_{\ooCat}(B, \HomLax(A, C)),
  \]
  natural in $A$, $B$ and $C$ in $\ooCat$.
\end{paragraph}

\begin{remark}
  The orientation of the non-trivial $2$-cell of $\Dn{1} \otimes \Dn{1}$
  in the diagram of the previous paragraph
  reveals that the Gray tensor product we work with is what we would call
  the \emph{oplax} Gray tensor product, as opposed to the \emph{lax} Gray
  tensor product in which this $2$-cell would be reversed.
\end{remark}

\begin{paragraph}\label{paragr:def_trans}
  Let $A$ and $B$ be two \oo-categories. The objects of the \oo-categories
  \[ \HomOpLax(A, B) \quadand \HomLax(A, B) \]
  are in canonical bijection with the \oo-functors from $A$ to $B$. If $u, v : A
  \to B$ are two such \oo-functors, a $1$-cell $\alpha$ of $\HomOpLax(A, B)$
  from $u$ to $v$ is called an \ndef{oplax transformation} from $u$ to $v$.
  We will then write $\alpha : u \tod v$. By definition, an oplax
  transformation $\alpha$ corresponds to an \oo-functor $\Dn{1} \to \HomOpLax(A, B)$
  and so, by adjunction, to an \oo-functor $\Dn{1} \otimes A \to B$. The
  fact that $\alpha$ has $u$ as source and $v$ as target translates as the
  commutativity of the diagram
  \[
    \xymatrix@C=3pc{
    A \ar[dr]^u \ar[d]_{\sigma \otimes A} \\
    \Dn{1} \otimes A \ar[r]^\alpha & B \\
    A \ar[ur]_v \ar[u]^{\tau \otimes A} & \pbox{,} \\
    }
  \]
  where we identify $A$ and $\Dn{0} \otimes A$. Alternatively, again by
  adjunction, an oplax transformation corresponds to an \oo-functor $A \to
  \HomLax(\Dn{1}, B)$. The source and the target of an oplax transformation
  given by such an \oo-functor are obtained by postcomposing by
  \[
    \begin{split}
      \HomLax(\sigma, B) & : \HomLax(\Dn{1}, B) \to \HomLax(\Dn{0}, B)
      \simeq B,
      \\
      \HomLax(\tau, B) & : \HomLax(\Dn{1}, B) \to \HomLax(\Dn{0}, B) \simeq
      B,
    \end{split}
  \]
  respectively.

  Similarly, $1$-cells of $\HomLax(A, B)$ are called \ndef{lax
  transformations}.
\end{paragraph}

\goodbreak

\begin{paragraph}
  Let $u : A \to B$ be an \oo-functor. We define the \ndef{identity oplax
  transformation} $\id{u} : u \tod u$ to be the oplax transformation
  corresponding to the composite
  \[ \xymatrix{\Dn{1} \otimes A \ar[r]^-{\kappa \otimes A} & \Dn{0} \otimes
  A \ar[r]^-{\sim} & A \ar[r]^u & B \pbox{,}} \]
  where the middle arrow is the canonical isomorphism.

  Let $v : A \to B$ be a second \oo-functor and let $\alpha : u \tod v$ be an
  oplax transformation, seen as an \oo-functor $\Dn{1} \otimes A \to B$.
  If $w : B \to C$ is an \oo-functor, we get an oplax transformation $w
  \comp \alpha : wu \tod wv$ by composing
  \[ \xymatrix{\Dn{1} \otimes A \ar[r]^-{\alpha} & B \ar[r]^w & C \pbox{.}} \]
  Similarly, if $w : C \to A$ is an \oo-functor, we get an oplax
  transformation $\alpha \comp w : uw \tod vw$ by composing
   \[
     \xymatrix@C=2.2pc{\Dn{1} \otimes C \ar[r]^{\Dn{1} \otimes w\,\,} & \Dn{1} \otimes A
     \ar[r]^-\alpha & B \pbox{.}}
   \]

   Finally, let $w : A \to B$ be a third \oo-functor and let $\beta : v \tod w$ be a
   second oplax transformation. The composition of $1$-cells in the
   \oo-category $\HomOpLax(A, B)$ gives an oplax transformation that we will
   denote by $\beta\alpha$. We have $\beta\alpha : u \tod w$.
\end{paragraph}

\begin{paragraph}
  The \oo-categories, \oo-functors and oplax transformations, with the
  operations defined in the previous paragraph, form a
  sesquicategory (see~\cite[Section 2]{StreetCatStruct} for a definition) that
  we will denote by $\ooCatOpLax$ (but they do not form a $2$-category!).
  Similarly, the \oo-categories, \oo-functors and lax transformations form a
  sesquicategory that we will denote by~$\ooCatLax$.
\end{paragraph}

\begin{paragraph}
  Let $i : A \to B$ be an \oo-functor. The \ndef{structure of a left}
  (resp.~\ndef{right}) \ndef{oplax transformation retract} on $i$ consists
  of
  \begin{enumerate}
    \item a retraction $r : B \to A$ of $i$ (so that we have $ri =
    \id{A}$),
    \item an oplax transformation $\alpha$ from $ir$ to $\id{B}$ (resp.~from
    $\id{B}$ to $ir$).
  \end{enumerate}
  We will say that the structure is \ndef{strong} if $\alpha \comp i =
  \id{i}$ and that it is \ndef{above its source} if~$r \comp \alpha =
  \id{r}$.

  We will say that $(i, r, \alpha)$, or simply $i$, is a \ndef{left}
  (resp.~\ndef{right}) \ndef{oplax transformation retract} if $(r, \alpha)$
  is a structure of left (resp.~right) oplax transformation retract on $i$.
  Such a retract will be said to be \ndef{strong} or \ndef{above its source}
  according to the properties of the structure $(r, \alpha)$.

  All the notions introduced in this paragraph admit lax variants
  obtained by replacing the oplax transformation $\alpha$ by a lax
  transformation.
\end{paragraph}

\begin{proposition}\label{prop:pullback_retr}
  Let $i : A \to B$ be a strong left \resp{right} oplax transformation retract above
  its source with retraction $r : B \to A$. Then for every diagram of pullback
  squares
  \[
    \xymatrix{
      A' \pullbackcorner \ar[d]_{i'} \ar[r]^u & A \ar[d]^i \\
      B' \pullbackcorner \ar[r] \ar[d]_{r'} & B \ar[d]^r \\
      A' \ar[r]_{u} & A \pbox{,}
    }
  \]
  the \oo-functor $i'$ is a strong left \resp{right} oplax transformation
  retract above its source with retraction~$r'$.
\end{proposition}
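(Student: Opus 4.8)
The plan is to pull back the entire retract structure $(r,\alpha)$ along $u$; the only delicate point is the construction of the oplax transformation $\alpha'$, which I would obtain from the universal property of the lower pullback square. Throughout, write $v\colon B'\to B$ for the unlabelled horizontal map, so that the lower square exhibits $B'$ as the pullback $A'\times_A B$ with projections $r'$ and $v$, and the commutativities $u r'=r v$ and $v i'=i u$ hold. For any \oo-category $X$ let $\pi_X\colon\Dn 1\otimes X\to X$ denote the projection induced by $\kappa\otimes X$ and the identification $\Dn 0\otimes X\simeq X$, so that $\id{w}=w\,\pi_X$ for every \oo-functor $w$ out of $X$. I treat the left case, the right one being obtained by replacing $\alpha\colon ir\tod\id{B}$ by $\alpha\colon\id{B}\tod ir$. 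First I would record that $r'$ is a retraction of $i'$: pasting the two pullback squares, the outer rectangle is again a pullback, its right-hand vertical map is $ri=\id{A}$, and the pullback of an identity along $u$ is an identity, so $r' i'=\id{A'}$.

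The heart of the argument is the construction of $\alpha'$. Viewing $\alpha\colon ir\tod\id{B}$ as an \oo-functor $\Dn 1\otimes B\to B$, I would define $\alpha'\colon\Dn 1\otimes B'\to B'$ through the universal property of $B'=A'\times_A B$ by prescribing $v\o\alpha'=\alpha\o(\Dn 1\otimes v)$ and $r'\o\alpha'=r'\,\pi_{B'}$, that is, the two components
\[
  \Dn 1\otimes B'\xrightarrow{\Dn 1\otimes v}\Dn 1\otimes B\xrightarrow{\ \alpha\ }B
  \quadand
  \Dn 1\otimes B'\xrightarrow{\ \pi_{B'}\ }B'\xrightarrow{\ r'\ }A'.
\]
The compatibility of these components over $A$ is exactly where the hypothesis \emph{above its source} enters: since $r\comp\alpha=\id{r}$ means $r\o\alpha=r\,\pi_B$, naturality of $\pi$ gives $r\o\alpha\o(\Dn 1\otimes v)=r v\,\pi_{B'}=u r'\,\pi_{B'}$, which is $u$ composed with the second component. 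Hence $\alpha'$ is well defined.

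It then remains to verify the four required properties, each by composing with the projections $r'$ and $v$ and invoking uniqueness for $B'$. Using $\kappa\sigma=\kappa\tau=\id{\Dn 0}$ (so that $\pi_{B'}(\sigma\otimes B')=\pi_{B'}(\tau\otimes B')=\id{B'}$), the bifunctoriality of $\otimes$, the relations $v i'=i u$ and $u r'=r v$, and the fact that $\alpha$ has source $ir$ and target $\id{B}$, I would compute the source $\alpha'\o(\sigma\otimes B')$ and target $\alpha'\o(\tau\otimes B')$ and identify them with $i' r'$ and $\id{B'}$, so that $\alpha'\colon i' r'\tod\id{B'}$. The condition \emph{above its source}, $r'\comp\alpha'=\id{r'}$, holds by construction, since $r'\o\alpha'=r'\,\pi_{B'}$. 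For strongness I would compose $\alpha'\o(\Dn 1\otimes i')$ with $r'$ and $v$: the $r'$-component collapses to $\pi_{A'}$ via $r' i'=\id{A'}$, while the $v$-component is $\alpha\o(\Dn 1\otimes(v i'))=\alpha\o(\Dn 1\otimes i)\o(\Dn 1\otimes u)$, which by strongness of $\alpha$ ($\alpha\comp i=\id{i}$, i.e. $\alpha\o(\Dn 1\otimes i)=i\,\pi_A$) and naturality of $\pi$ equals $i u\,\pi_{A'}$; uniqueness then yields $\alpha'\comp i'=\id{i'}$.

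The main obstacle is conceptual rather than computational: one cannot build $\alpha'$ by commuting $\Dn 1\otimes(\var)$ past the pullback, for this functor is a left adjoint (to $\HomLax(\Dn 1,\var)$) and need not preserve limits, so the transformation must be produced by mapping $\Dn 1\otimes B'$ \emph{into} $B'$, and the real content is recognizing that \emph{above its source} is precisely the condition making the two prescribed components compatible. Equivalently, and perhaps more cleanly, one may argue through the adjoint description: an oplax transformation into $B'$ is an \oo-functor $B'\to\HomLax(\Dn 1,B')$, and since $\HomLax(\Dn 1,\var)$ is a right adjoint it preserves the pullback, so that $\alpha'$ is assembled from $\id{r'}$ and $\alpha\comp v$; the verifications are then the same.
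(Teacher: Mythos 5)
Your argument is correct, and it is necessarily a different route from the paper's, because the paper does not prove this proposition at all: it simply cites it as a particular case of a general base-change result for oplax transformation retracts, \cite[Proposition~5.6]{AraMaltsiThmAII}. Your direct construction --- defining $\alpha'$ into the pullback $B'=A'\times_A B$ by the two components $\alpha\o(\Dn{1}\otimes v)$ and $r'\pi_{B'}$, and observing that the hypothesis ``above its source'' ($r\comp\alpha=\id{r}$) is exactly the compatibility of these components over $A$ --- is the natural hands-on proof, and your alternative formulation via the right adjoint $\HomLax(\Dn{1},\var)$ preserving the pullback is essentially the mechanism behind the cited result. What the citation buys the authors is uniformity (the companion paper's proposition packages this base change once and for all, in a sesquicategorical setting reused elsewhere); what your argument buys is transparency about precisely where each of the hypotheses ``strong'' and ``above its source'' is used, and the correct observation that one cannot simply push $\Dn{1}\otimes(\var)$ through the limit. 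The one imprecise step is the retraction identity: pasting the two pullback squares only exhibits $(r'i',u)$ as a limiting cone over the cospan $(u,\id{A})$, which yields that $r'i'$ is an \emph{isomorphism} satisfying $ur'i'=u$, not literally the identity. This is harmless, since the statement is to be read as referring to the canonical diagram obtained by pulling back the factorization $\id{A}=ri$ along $u$ (this is exactly how the proposition is applied in paragraph~\ref{paragr:fact}), for which $r'i'=\id{A'}$ holds by construction of $i'$ as the map induced by the cone $(\id{A'},iu)$; but it is worth stating that identification explicitly rather than deriving it from the pasting lemma.
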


\begin{proof}
  This is a particular case of \cite[Proposition 5.6]{AraMaltsiThmAII}.
\end{proof}

We now recall the basic definitions and some properties of the comma
construction for \oo-categories that we introduced with Maltsiniotis in
\cite{AraMaltsiThmAII}.

\begin{paragraph}\label{paragr:def_comma}
  Let
  \[
    \xymatrix{
      A \ar[r]^u & C & B \ar[l]_v
    }
  \]
  be two \oo-functors. We define the \ndef{comma \oo-category}
  $u \comma_C v$, also simply denoted by~$u \comma v$, to be the iterated
  fiber product
  \[
    u \comma v = A \times_C \HomLax(\Dn{1}, C) \times_C B,
  \]
  projective limit of the diagram
  \[
    \xymatrix{
      A \ar[r]^u & C & \HomLax(\Dn{1}, C) \ar[l]_-{\pi_0} \ar[r]^-{\pi_1}
                 & C & B \ar[l]_v \pbox{,}
    }
  \]
  where $\pi_0 = \HomLax(\sigma, C)$ and $\pi_1 = \HomLax(\tau, C)$.

  In the case where $A = C$ and $u = \id{C}$, we will denote $u \comma v$ by
  $C \comma v$. Similarly, if $B = C$ and $v = \id{C}$, we will denote $u
  \comma v$ by $u \comma C$.

  The canonical projections induce \oo-functors
  \[
    \xymatrix{A & u \comma v \ar[l]_{p_1} \ar[r]^{p_2} & B}
  \]
  and, using the description of oplax transformations in terms of $\HomLax(\Dn{1}, C)$
  given in paragraph~\ref{paragr:def_trans}, an oplax transformation
  \[
    \shorthandoff{;}
    \xymatrix@C=1.5pc@R=1.5pc{
      & u \comma v \ar[dl]_{p_1} \ar[dr]^{p_2} \\
      A \ar[dr]_u \ar@{}[rr]_(.38){}="x"_(.62){}="y"
      \ar@2"x";"y"^{\kappa\,\,}
      & & B \ar[dl]^v \\
      & C & \pbox{.}
    }
  \]
  Moreover, the data of an \oo-functor $T \to u \comma v$ corresponds to
  the data of a diagram
  \[
    \shorthandoff{;}
    \xymatrix@C=1.5pc@R=1.5pc{
      & T \ar[dl]_a \ar[dr]^b \\
      A \ar[dr]_u \ar@{}[rr]_(.35){}="x"_(.65){}="y"
      \ar@2"x";"y"^{\lambda\,\,}
      & & B \ar[dl]^v \\
      & C & \pbox{,}
    }
  \]
  where $a$ and $b$ are \oo-functors and $\lambda : ua \tod vb$ is an oplax
  transformation.
\end{paragraph}

\begin{paragraph}\label{paragr:def_comma_morph}
  Fix $v : B \to C$ an \oo-functor and consider a diagram
  \[
    \shorthandoff{;}
    \xymatrix@C=1.5pc{
      A \ar[rr]^w \ar[dr]_(0.40){\phantom{u'}u}_(.60){}="f" & & A' \ar[dl]^(0.40){u'} \\
      & C
      \ar@{}"f";[ur]_(.15){}="ff"
      \ar@{}"f";[ur]_(.55){}="oo"
      \ar@<-0.0ex>@2"oo";"ff"_\alpha
      &
    }
  \]
  in $\ooCat$, where $\alpha : u'w \tod u$ is an oplax transformation. We
  define an \oo-functor
  \[ (w, \alpha) \comma v : u \comma v \to u' \comma v \]
  in the following way. Let
  \[
    \shorthandoff{;}
    \xymatrix@C=1.5pc@R=1.5pc{
      & T \ar[dl]_a \ar[dr]^b \\
      A \ar[dr]_u \ar@{}[rr]_(.35){}="x"_(.65){}="y"
      \ar@2"x";"y"^{\lambda\,\,}
      & & B \ar[dl]^v \\
      & C &
    }
  \]
  be a diagram corresponding to an \oo-functor $T \to u \comma v$. By
  composing the diagram
  \[
    \shorthandoff{;}
    \xymatrix@R=1pc@C=3pc{
      & T \ar[dl]_a \ar[dr]^b \\
      A \ar[dd]_w \ar[dr]^u_{}="f" & & B \ar[dl]_v_{}="g"
      \ar@{}[ll];[]_(0.40){}="x"_(0.60){}="y"
      \ar@2"x";"y"^{\lambda\,\,}
       \\
        & C \\
      A' \ar[ur]_{u'}
      & &
      \ar@{}[ll];"f"_(0.35){}="sa"_(0.85){}="ta"
      \ar@2"sa";"ta"^{\alpha}
      \pbox{,}
    }
  \]
  we get a diagram
  \[
    \shorthandoff{;}
    \xymatrix@C=1.5pc@R=1.5pc{
      & T \ar[dl]_{wa} \ar[dr]^b \\
      A' \ar[dr]_{u'} \ar@{}[rr]_(.35){}="x"_(.65){}="y"
      \ar@2"x";"y"^{\lambda(\alpha \comp a)} 
      & & B \ar[dl]^{v\phantom{u'}} \\
      & C &
    }
  \]
  corresponding to an \oo-functor $T \to u' \comma v$. This correspondence
  is natural in $T$ and defines, by the Yoneda lemma, our \oo-functor $(w,
  \alpha) \comma v : u \comma v \to u' \comma v$.

  One checks (see \cite[Proposition 6.9]{AraMaltsiThmAII}) that the square
  and the triangle
  \[
    \xymatrix@C=3pc{
      u \comma v \ar[r]^{(w, \alpha) \comma v\,\,} \ar[d]_{p_1} &
      u' \comma v \ar[d]^{p_1}
      \\
      A \ar[r]_w & A'
    }
    \qquad
    \xymatrix@C=1.5pc{
      u \comma v \ar[rr]^{(w, \alpha) \comma v} \ar[dr]_{p_2} & &
      u' \comma v \ar[dl]^{p_2}
      \\
      & B
    }
  \]
  are commutative.

  Note that in the case where $\alpha$ is the identity,
  so that the original triangle is commutative, the diagram corresponding to
  the \oo-functor $T \to u' \comma v$ becomes
  \[
    \shorthandoff{;}
    \xymatrix@C=1.5pc@R=1.5pc{
      & T \ar[dl]_{wa} \ar[dr]^b \\
      A' \ar[dr]_{u'} \ar@{}[rr]_(.35){}="x"_(.65){}="y"
      \ar@2"x";"y"^{\lambda\,\,}
      & & B \ar[dl]^{v\phantom{u'}} \\
      & C & \pbox{,}
    }
  \]
  showing that
  \[ (w, \id{u}) \comma v : u \comma v \to u' \comma v \]
  is nothing but the \oo-functor
  \[
    w \times_C \HomLax(\Dn{1}, C) \times_C B :
    A \times_C \HomLax(\Dn{1}, C) \times_C B \to
    A' \times_C \HomLax(\Dn{1}, C) \times_C B.
  \]

  If now $u : A \to C$ is an \oo-functor and
  \[
    \shorthandoff{;}
    \xymatrix@C=1.5pc{
      B \ar[rr]^w \ar[dr]_(0.40){\phantom{v'}v}_(.60){}="f" & & B' \ar[dl]^(0.40){v'} \\
      & C
      \ar@{}"f";[ur]_(.15){}="ff"
      \ar@{}"f";[ur]_(.55){}="oo"
      \ar@<-0.0ex>@{<=}"oo";"ff"_\beta
      &
    }
  \]
  is a diagram in $\ooCat$, where $\beta : v \tod v'w$ is an oplax
  transformation, we define similarly an \oo-functor
  \[ u \comma (\beta, w) : u \comma v \to u \comma v' \]
  enjoying analogous properties.
\end{paragraph}

\begin{remark}
  We proved with Maltsiniotis in \cite[Appendix B]{AraMaltsiThmAII} that, if
  $C$ is an \oo-category, the comma construction actually defines a functor
  and even a sesqui\-functor
  \[ \var \comma_C \var : \tr{\ooCatOpLaxGray}{C} \times \trto{\ooCatOpLaxGray}{C}
  \to \ooCatOpLax, \]
  where $\tr{\ooCatOpLaxGray}{C}$ and $\trto{\ooCatOpLaxGray}{C}$ are some
  appropriate sesquicategories. We will only need some consequences of this
  result which we will recall in this section.
\end{remark}

\begin{proposition}\label{prop:comma_fib_prod}
  Let
  \[
    \xymatrix{
      A \ar[r]^u & C & B \ar[l]_v
    }
  \]
  be two \oo-functors. We have pullback squares
  \[
    \xymatrix@C=3.5pc{
      u \comma v \pullbackcorner \ar[d]_{p_1} \ar[r]^-{(u, \id{u}) \comma v} &
      C \comma v \ar[d]^{p_1} \\
      A \ar[r]_u & C
    }
    \qquad
    \xymatrix@C=3.5pc{
      u \comma v \pullbackcorner \ar[d]_{u \comma (\id{v}, v)} \ar[r]^-{p_2} &
      B \ar[d]^{v} \\
      u \comma C \ar[r]_-{p_2} & C \pbox{.}
    }
  \]
\end{proposition}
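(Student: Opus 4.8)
The plan is to observe that both squares are nothing but the tautological pullback squares obtained by writing the triple fiber product defining $u \comma v$ as an iterated (two-step) fiber product, grouped in two different ways, and then identifying the resulting canonical projections with the named \oo-functors. Recall from paragraph~\ref{paragr:def_comma} that $u \comma v = A \times_C \HomLax(\Dn{1}, C) \times_C B$, the fiber products being taken along $u$, $\pi_0$, $\pi_1$ and $v$.

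For the first square, I would group the last two factors: modulo the canonical identification $C \times_C X \simeq X$, one has $C \comma v = \HomLax(\Dn{1}, C) \times_C B$, with $p_1 : C \comma v \to C$ corresponding to the projection followed by $\pi_0$. Hence $u \comma v = A \times_C (C \comma v)$, the fiber product of $u : A \to C$ and $p_1 : C \comma v \to C$, and the defining pullback square of this fiber product is exactly
\[
  \xymatrix@C=3.5pc{
    u \comma v \pullbackcorner \ar[d]_{p_1} \ar[r] & C \comma v \ar[d]^{p_1} \\
    A \ar[r]_u & C \pbox{.}
  }
\]
It then remains to check that the top horizontal map is $(u, \id{u}) \comma v$: by the remark closing paragraph~\ref{paragr:def_comma_morph}, the \oo-functor $(u, \id{u}) \comma v$ is $u \times_C \HomLax(\Dn{1}, C) \times_C B$, which, after the identification $C \times_C X \simeq X$, is precisely the projection $A \times_C (C \comma v) \to C \comma v$.

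For the second square I would proceed symmetrically, grouping the first two factors: modulo $X \times_C C \simeq X$ one has $u \comma C = A \times_C \HomLax(\Dn{1}, C)$, with $p_2 : u \comma C \to C$ given by $\pi_1$, so that $u \comma v = (u \comma C) \times_C B$ is the fiber product of $p_2 : u \comma C \to C$ and $v : B \to C$. Its defining pullback square is the one in the statement, and the analogue of the same remark identifies the left vertical map with $u \comma (\id{v}, v) = A \times_C \HomLax(\Dn{1}, C) \times_C v$ and the top horizontal map with $p_2 : u \comma v \to B$.

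The pullback property itself is then formal---it is the associativity of fiber products, equivalently the statement that a limit over the defining diagram may be computed in stages. The only real work, and the one point requiring care, is the bookkeeping that matches the canonical projections of the two regroupings with the \oo-functors $p_1$, $p_2$, $(u, \id{u}) \comma v$ and $u \comma (\id{v}, v)$; this is handled entirely by the explicit descriptions recalled in paragraph~\ref{paragr:def_comma_morph}. Alternatively, one could argue through the functor-of-points description of paragraph~\ref{paragr:def_comma}, checking directly that a compatible pair consisting of $a : T \to A$ and a datum $(c, b, \lambda) : T \to C \comma v$ with $ua = c$ corresponds to the unique datum $(a, b, \lambda : ua \tod vb) : T \to u \comma v$, and symmetrically for the second square.
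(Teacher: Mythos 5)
Your proposal is correct and follows essentially the same route as the paper's own proof: rewrite $u \comma v = A \times_C \HomLax(\Dn{1}, C) \times_C B$ as $A \times_C (C \comma v)$ (respectively $(u \comma C) \times_C B$) via associativity of fiber products, and use the remark at the end of paragraph~\ref{paragr:def_comma_morph} to identify the induced projection with $(u, \id{u}) \comma v$ (respectively $u \comma (\id{v}, v)$). Nothing is missing.
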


\begin{proof}
  We will only treat the first pullback square, the proof for the second one
  being similar. We have
  \[
    \begin{split}
      u \comma v & = A \times_C \HomLax(\Dn{1}, C) \times_C B \\
       & \simeq A \times_C \big(C \times_C \HomLax(\Dn{1}, C) \times_C B\big) \\
       & = A \times_C (C \comma v),
    \end{split}
  \]
  where the isomorphism is induced by the \oo-functors $p_1$ and $u \times_C
  \HomLax(\Dn{1}, C) \times_C B$. But by
  paragraph~\ref{paragr:def_comma_morph}, the latter \oo-functor is nothing
  but $(u, \id{u}) \comma v$, thereby proving the result.
\end{proof}

\begin{proposition}\label{prop:comma_retr}
  Let
    \[
      \xymatrix{
        A \ar[r]^u & C & B \ar[l]_v
      }
    \]
  be two \oo-functors.
  \begin{enumerate}
    \item\label{item:comma_retr_a} If $i : A' \to A$ is a strong left oplax
      transformation retract, then so is
      \[ (i, \id{ui}) \comma v :  (ui) \comma v \to u \comma v. \]
      More precisely, if $(r, \alpha)$ is a structure of strong left oplax
      transformation retract on $i$, then there exists a structure of strong
      left oplax transformation retract on $(i, \id{ui}) \comma v$ of the
      form~$(r', \gamma)$
      % $((r, u \comp \alpha) \comma u, \gamma)$,
      with $\gamma$ compatible with $\alpha$ in the sense that $p_1 \comp
      \gamma = \alpha \comp p_1$, where~$p_1 : u \comma v \to A$.

    \item\label{item:comma_retr_b} If $j : B' \to B$ is a strong right oplax
      transformation retract, then so is
      \[ u \comma (\id{vj}, j) :  u \comma (vj) \to u \comma v. \]
      More precisely, if $(r, \beta)$ is a structure of strong right oplax
      transformation retract on $j$, then there exists a structure of strong
      right oplax transformation retract on $u \comma (\id{vj}, j)$ of the
      form~$(r', \gamma)$
      % $(u \comma (v \comp \beta, r), \gamma)$,
      with $\gamma$ compatible with $\beta$ in the sense that $p_2 \comp
      \gamma = \beta \comp p_2$, where~$p_2 : u \comma v \to B$.
  \end{enumerate}
\end{proposition}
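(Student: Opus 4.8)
The plan is to deduce both items from the sesquifunctoriality of the comma construction established in \cite[Appendix~B]{AraMaltsiThmAII}, which for fixed $v$ provides a sesquifunctor $\var \comma_C v : \tr{\ooCatOpLaxGray}{C} \to \ooCatOpLax$. The guiding idea is that the strong left oplax transformation retract structure $(r, \alpha)$ on $i$, once composed with $u$, is precisely a strong left oplax transformation retract structure in the slice sesquicategory $\tr{\ooCatOpLaxGray}{C}$ carried by the $1$\nbd-cell $(i, \id{ui})$. Since any sesquifunctor preserves composition and identities of $1$\nbd-cells, vertical composition and identities of $2$\nbd-cells, and whiskering, it in particular preserves strong oplax transformation retracts; applying $\var \comma_C v$ then yields the desired structure on $(i, \id{ui}) \comma v$. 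I would treat item~\ref{item:comma_retr_a}, item~\ref{item:comma_retr_b} following by the evident dual argument using the functoriality $u \comma (\beta, w)$ in place of $(w, \alpha) \comma v$.

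Concretely, in $\tr{\ooCatOpLaxGray}{C}$ the object $u : A \to C$ receives the $1$\nbd-cell $(i, \id{ui})$ from $ui : A' \to C$, and I would take as candidate retraction $1$\nbd-cell the pair $(r, u \comp \alpha)$, where $u \comp \alpha : uir \tod u$, with the whiskered transformation $\alpha$ itself as candidate $2$\nbd-cell. Setting $r' = (r, u \comp \alpha) \comma v$ and letting $\gamma$ be the image of $\alpha$ under $\var \comma_C v$, I would then verify, using the explicit description of $(w, \alpha) \comma v$ on $T$\nbd-points from paragraph~\ref{paragr:def_comma_morph}, that $(r', \gamma)$ is a strong left oplax transformation retract structure on $(i, \id{ui}) \comma v$. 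The compatibility $p_1 \comp \gamma = \alpha \comp p_1$ then comes for free: the projections $p_1$ assemble into a sesquinatural transformation from $\var \comma_C v$ to the sesquifunctor sending $u : A \to C$ to $A$, and naturality evaluated at the $2$\nbd-cell $\alpha$ gives exactly the stated identity, since the composite endofunctor of $u \comma v$ acts as $ir$ on the first factor.

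The heart of the matter, and the step I expect to require the most care, is checking that $(r, u \comp \alpha)$ together with $\alpha$ genuinely constitutes a strong left oplax transformation retract in the slice sesquicategory. The retraction identity $(r, u \comp \alpha) \circ (i, \id{ui}) = \id{ui}$ holds on underlying functors because $ri = \id{A'}$, but its oplax-transformation component collapses to the identity only thanks to the strong hypothesis $\alpha \comp i = \id{i}$: tracing through the composition law for the pairs $(w, \alpha)$, the relevant transformation is $(u \comp \alpha) \comp i = u \comp (\alpha \comp i) = u \comp \id{i} = \id{ui}$. The same strong hypothesis is what makes the whiskering of $\gamma$ by $(i, \id{ui}) \comma v$ reduce to the identity, so that the resulting retract is again strong. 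Once this slice-level verification is in place, everything is transported automatically by the sesquifunctor $\var \comma_C v$, and the compatibility with $p_1$ is obtained as above.
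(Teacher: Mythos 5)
Your argument is correct and is essentially the paper's: the proof in the text simply cites \cite[Corollary~B.2.8.(a) and Proposition~B.2.9]{AraMaltsiThmAII}, and those results are obtained exactly as you describe, by exhibiting $(i,\id{ui})$ as a strong left oplax transformation retract in the slice sesquicategory with retraction $(r, u\comp\alpha)$, transporting this structure through the sesquifunctor $\var \comma_C v$, and using the sesquinaturality of $p_1$ for the compatibility $p_1 \comp \gamma = \alpha \comp p_1$. The only point to keep precise is that a $2$\nbd-cell of $\tr{\ooCatOpLaxGray}{C}$ is a pair consisting of an oplax transformation together with a higher coherence datum, so your candidate $2$\nbd-cell is really $(\alpha, \mathrm{id})$ rather than $\alpha$ alone; with that understood, your verification (including the use of strongness $\alpha \comp i = \id{i}$ in both the retraction identity and the strongness of $\gamma$) goes through.
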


\begin{proof}
  The first assertion of \ref{item:comma_retr_a} is exactly~\cite[Corollary
  B.2.8.(a)]{AraMaltsiThmAII}. The proof of this corollary actually produces
  a structure $(r', \gamma)$ and the fact that this
  $\gamma$ is compatible with $\alpha$ follows from \cite[Proposition
  B.2.9]{AraMaltsiThmAII}. The situation is similar for~\ref{item:comma_retr_b}.
\end{proof}

We now introduce slice \oo-categories in terms of comma \oo-categories. For
a more concrete description, we refer the reader to
\cite[paragraph~4.1]{AraMaltsiThmAI} (see
\cite[Proposition~7.1]{AraMaltsiThmAII} for the comparison of the two
definitions).

\begin{paragraph}
  If $A$ is an \oo-category and $a$ is an object of $A$, we will denote by
  $\cotr{A}{a}$ the \oo-category
  \[ \cotr{A}{a} = a \comma A, \]
  where $a$ is seen as an \oo-functor $\Dn{0} \to A$. The \oo-functor
  $p_2 : \cotr{A}{a} \to A$ will be called the \ndef{forgetful \oo-functor}.

  More generally, if $u : A \to B$ is an \oo-functor and $b$ is an object
  of $B$, we set
  \[ \cotr{A}{b} = b \comma u, \]
  where $b$ is also seen as an \oo-functor $\Dn{0} \to B$. It
  follows from Proposition~\ref{prop:comma_fib_prod} that we have
  \[ \cotr{A}{b} = \cotr{B}{b} \times_B A, \]
  where the fiber product involves the forgetful \oo-functor $\cotr{B}{b}
  \to B$ and $u$. In this setting, we also have a \ndef{forgetful
  \oo-functor} $\cotr{A}{b} \to A$.

  If $f : b \to b'$ is a $1$-cell of $B$, we define the \oo-functor
  \[ \cotr{A}{f} : \cotr{A}{b'} \to \cotr{A}{b} \]
  to be
  \[ (\id{\Dn{0}}, f) \comma u : b' \comma u \to b \comma u, \]
  where $f$ is seen as an oplax transformation
  \[
    \shorthandoff{;}
    \xymatrix@C=1.5pc{
      \Dn{0} \ar[rr]^{\id{\Dn{0}}} \ar[dr]_(0.40){b'}_(.60){}="f" & & \Dn{0}
      \ar[dl]^(0.40){b\phantom{b'}} \\
      & C
      \ar@{}"f";[ur]_(.15){}="ff"
      \ar@{}"f";[ur]_(.55){}="oo"
      \ar@<-0.0ex>@2"oo";"ff"_f
      & \pbox{.}
    }
  \]

\end{paragraph}

\begin{proposition}\label{prop:comma_triangle}
  Let
  \[
    \xymatrix{
      A \ar[r]^u & C & B \ar[l]_v
    }
  \]
  be two \oo-functors and let $f : a \to a'$ be a $1$-cell of $A$. Then
  there exists an oplax transformation
  \[
    \shorthandoff{;}
    \xymatrix@C=1.5pc{
      \cotr{B}{u(a')} \ar[rr]^{\cotr{B}{u(f)}}
      \ar[dr]_(0.40){(a', \id{u(a')}) \comma v}_(.60){}="f" & &
    \cotr{B}{u(a)} \ar[dl]^(0.40){(a, \id{u(a)}) \comma v} \\
      & u \comma v
      \ar@{}"f";[ur]_(.15){}="ff"
      \ar@{}"f";[ur]_(.55){}="oo"
      \ar@<-0.0ex>@2"oo";"ff"
      & \pbox{.}
    }
  \]
\end{proposition}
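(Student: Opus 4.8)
The plan is to realize the desired oplax transformation as the image, under the comma sesquifunctor $\var \comma_C v$, of an explicit $2$-cell of the slice sesquicategory $\tr{\ooCatOpLaxGray}{C}$ induced by the $1$-cell $f$. Recall from paragraph~\ref{paragr:def_comma_morph} that the three sides of the triangle are all instances of the construction $(w, \alpha) \comma v$: the left leg $(a', \id{u(a')}) \comma v$ is the image of the $1$-cell $(a', \id{u(a')})$ from $(\Dn{0}, u(a'))$ to $(A, u)$, the right leg $(a, \id{u(a)}) \comma v$ the image of $(a, \id{u(a)})$ from $(\Dn{0}, u(a))$ to $(A, u)$, and the top arrow $\cotr{B}{u(f)}$ is by definition $(\id{\Dn{0}}, u(f)) \comma v$, the image of $(\id{\Dn{0}}, u(f))$ from $(\Dn{0}, u(a'))$ to $(\Dn{0}, u(a))$, where $u(f)$ is viewed as an oplax transformation $u(a) \tod u(a')$ between the two \oo-functors $\Dn{0} \to C$. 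It thus suffices to produce an oplax transformation between the two \oo-functors $u(a') \comma v \to u \comma v$ given by the two composites around the triangle.

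First I would identify the composite $\big((a, \id{u(a)}) \comma v\big) \circ \cotr{B}{u(f)}$. Composing the $1$-cells $(\id{\Dn{0}}, u(f))$ and $(a, \id{u(a)})$ in $\tr{\ooCatOpLaxGray}{C}$ yields the $1$-cell $(a, u(f))$ from $(\Dn{0}, u(a'))$ to $(A, u)$, whose underlying \oo-functor is $a : \Dn{0} \to A$ and whose oplax transformation is $u(f) : u(a) \tod u(a')$. By functoriality of the comma construction in its first variable (paragraph~\ref{paragr:def_comma_morph} and \cite[Appendix~B]{AraMaltsiThmAII}) this gives
\[
  \big((a, \id{u(a)}) \comma v\big) \circ \cotr{B}{u(f)} = (a, u(f)) \comma v .
\]
One can also check this by hand using the description of \oo-functors into a comma \oo-category from paragraph~\ref{paragr:def_comma}: applying both sides to a generic \oo-functor $T \to u(a') \comma v$, encoded by an \oo-functor $T \to B$ and an oplax transformation $\lambda : u(a') \tod v b$, produces in each case the datum whose $A$-component is the constant \oo-functor at $a$, whose $B$-component is unchanged, and whose oplax transformation is $\lambda$ whiskered with $u(f)$.

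It then remains to produce an oplax transformation between the two \oo-functors $(a, u(f)) \comma v$ and $(a', \id{u(a')}) \comma v$. Both are the value of $\var \comma_C v$ on a $1$-cell of $\tr{\ooCatOpLaxGray}{C}$ with the same source $(\Dn{0}, u(a'))$ and target $(A, u)$, so I would exhibit a $2$-cell of $\tr{\ooCatOpLaxGray}{C}$ between these two $1$-cells and then apply the sesquifunctor. The $1$-cell $f : a \to a'$ of $A$, regarded as an oplax transformation $f : a \tod a'$ between the \oo-functors $a, a' : \Dn{0} \to A$, is precisely such a $2$-cell: the compatibility it must satisfy is the equality $\id{u(a')}\,(u \comp f) = u(f)$, which holds because $u \comp f = u(f)$ and composition with an identity oplax transformation changes nothing. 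Applying $\var \comma_C v$ to this $2$-cell (see \cite[Appendix~B]{AraMaltsiThmAII}) yields an oplax transformation from $(a, u(f)) \comma v$ to $(a', \id{u(a')}) \comma v$, that is, the oplax transformation filling the triangle.

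The main obstacle is bookkeeping rather than conceptual. One must recall from \cite[Appendix~B]{AraMaltsiThmAII} the precise $2$-cell structure of the slice sesquicategory $\tr{\ooCatOpLaxGray}{C}$, together with the conventions governing whiskering orders, the $\kappa$\nbd-twist hidden in the various identity oplax transformations, and the source/target assignment of the comma sesquifunctor on $2$-cells, and then check carefully both the identification of the composite in the second step and the compatibility condition defining the $2$-cell $f$ in the third. Each of these is a routine verification once the conventions are fixed; the entire conceptual content is the sesquifunctoriality of the comma construction.
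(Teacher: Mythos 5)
Your proposal is correct and follows essentially the same route as the paper: both proofs invoke the sesquifunctoriality of $\var \comma_C v$ and apply it to the $2$-cell $(f, \id{u(f)}) : (a, \id{u(a)}) \comp_0 (\id{\Dn{0}}, u(f)) \Rightarrow (a', \id{u(a')})$ of $\tr{\ooCatOpLaxGray}{C}$. Your version merely makes explicit the intermediate computation of the composite $1$-cell as $(a, u(f))$ and the compatibility equality $\id{u(a')}(u \comp f) = u(f)$, which the paper leaves implicit in its notation.
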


\begin{proof}
  We will use the fact that the comma construction $\var \comma v$ extends
  to a sesquifunctor from a certain sesquicategory $\tr{\ooCatOpLaxGray}{C}$
  to the sesquicategory $\ooCatOpLax$ (see
  \cite[Theorem~B.2.6]{AraMaltsiThmAII}). All we need to know about the
  source sesquicategory of this extension (see \cite[paragraph
  B.1.18]{AraMaltsiThmAII} for a complete description) is that the
  diagram
   \[
      \shorthandoff{;:}
      \xymatrix@C=1.5pc@R=3.4pc{
        \Dn{0} \ar@/^3ex/[rr]^{a}_{}="1" \ar@/_2ex/[rr]_{a'}_{}="0"
        \ar[dr]_(.60){}="2"_{u(a')}
        \ar@{}"1";"0"_(.10){}="1'"_(.90){}="0'"
        \ar@2"1'";"0'"^{\,f}
        & & A \ar[dl]^{u\phantom{u(a')}}_(.60){}="3" \\
        & C
        \ar@{}"2";"3"^{\textstyle =}
        &
        }
        \qquad
        \raisebox{-1.3pc}{\xymatrix@C=1pc{\ar@3[r]^{=\,\,\,} &}}
        \qquad
      \raisebox{1.2pc}{
      $\xymatrix@C=1.5pc@R=0.05pc{
        & \Dn{0} \ar@/^1ex/[dr]^a
        \ar[dddddddd]^(.20){u(a)}_(0.35){}="1"_(.47){}="2" \\
        \Dn{0} \ar@{=}@/^1ex/[ur]
        \ar[dddddddr]_(.40){}="0"_{u(a')}
        & & A \ar[dddddddl]^{u\phantom{u(a')}}_(.47){}="3"
        \\ \\ \\ \\ \\ \\ \\
        & C
        \ar@{}"1";"0"_(.10){}="1'"_(.90){}="0'"
        \ar@2"1'";"0'"_{u(f)}
        \ar@{}"2";"3"^{\textstyle =}
        & \pbox{,}
        }$}
  \]
  where the $2$-arrows denote oplax transformations and the $3$-arrow is
  formally an identity ``oplax $2$-transformation'' but can be interpreted
  simply as an equality of oplax transformations,
  defines a $2$-cell from the composite of $1$-cells associated to the
  $2$\nbd-triangles
  \[
      \shorthandoff{;}
      \xymatrix@C=1.5pc@R=0.05pc{
        & \Dn{0}
        \ar[dddddddd]^{u(a)}_(0.35){}="1"_(.47){}="2" \\
        \Dn{0} \ar@{=}@/^1ex/[ur]
        \ar[dddddddr]_(.40){}="0"_{u(a')}
        & &
        \\ \\ \\ \\ \\ \\ \\
        & C
        \ar@{}"1";"0"_(.10){}="1'"_(.90){}="0'"
        \ar@2"1'";"0'"_{u(f)}
        &
        }
      \qquad
      \xymatrix@C=1.5pc@R=0.05pc{
        & \Dn{0} \ar@/^1ex/[dr]^a
        \ar[dddddddd]_{u(a)}_(0.35){}="1"_(.47){}="2" \\
        & & A \ar[dddddddl]^{u\phantom{u(a')}}_(.47){}="3"
        \\ \\ \\ \\ \\ \\ \\
        & C
        \ar@{}"2";"3"^{\textstyle =}
        &
        }
  \]
  to a $1$-cell associated to the $2$-triangle
   \[
      \shorthandoff{;:}
      \xymatrix@C=1.5pc@R=3.4pc{
        \Dn{0} \ar@/_2ex/[rr]^{a'}_{}="0"
        \ar[dr]_(.60){}="2"_{u(a')}
        & & A \ar[dl]^{u\phantom{u(a')}}_(.60){}="3" \\
        & C
        \ar@{}"2";"3"^{\textstyle =}
        & \pbox{.}
        }
    \]
  (In the notation of \cite[paragraph B.1.18]{AraMaltsiThmAII}, this
  $2$-cell is denoted by $(f, \id{u(f)})$ and we have
    \[
     (f, \id{u(f)}) :
      (a, \id{u(a)}) \comp_0 (\id{\Dn{0}}, u(f)) \Rightarrow (a', \id{u(a')})
    \]
  in $\tr{\ooCatOpLaxGray}{C}$.)
  By applying the sesquifunctoriality of $\var \comma v$ to this $2$-cell, we get
  a $2$\nbd-cell in $\ooCatOpLax$, that is, an oplax transformation, from the
  composite of the \oo-functors $(a, \id{u(a)}) \comma v$ and $(\id{\Dn{0}},
  u(f)) \comma v = \cotr{B}{u(f)}$ to the \oo-functor $(a', \id{u(a')})
  \comma v$, thereby proving the result.
\end{proof}

\goodbreak

We end the section with a kind of mapping space factorization for
\oo-functors, involving comma \oo-categories, that will be needed in our
proof of the \oo-categorical Theorem B.

\begin{paragraph}\label{paragr:fact}
  Let $u : A \to B$ be an \oo-functor. We will see that $u$ factors as
  \[
    \xymatrix{
      A \ar[r]^-j & B \comma u \ar[r]^-{p_1} & B \pbox{,}
    }
  \]
  for some \oo-functor $j$. As the composition
  \[ \xymatrix{\Dn{0} \ar[r]^\tau & \Dn{1} \ar[r]^\kappa & \Dn{0}} \]
  is the identity, by applying the functor $\HomLax(\var, B)$, we get a
  factorization
  \[ \xymatrix{B \ar[r]^-\iota & \HomLax(\Dn{1}, B) \ar[r]^-{\pi_1} & B} \]
  of the identity of $B$. By pulling back this factorization along $u$, we
  get a diagram of pullback squares
  \[
    \xymatrix{
      A \pullbackcorner \ar[d]_j \ar[r]^u & B \ar[d]^\iota \\
      B \comma u \pullbackcorner \ar[r] \ar[d]_{p_2} & \HomLax(\Dn{1}, B)
      \ar[d]^{\pi_1} \\
      A \ar[r]_{u} & B \pbox{,}
    }
  \]
  defining our \oo-functor $j$. The equality $\pi_0\iota = \id{B}$ easily
  implies that we have $u = p_1j$, as announced.
\end{paragraph}

\begin{proposition}\label{prop:j_retr}
  The \oo-functor $j : A \to B \comma u$ defined in the previous paragraph
  is a strong right oplax transformation retract above its source with
  retraction~$p_2 : B \comma u \to A$.
\end{proposition}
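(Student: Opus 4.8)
The plan is to realize the diagram of paragraph~\ref{paragr:fact} as an instance of Proposition~\ref{prop:pullback_retr}. That diagram is a stack of two pullback squares whose right-hand column is the factorization $B \xrightarrow{\iota} \HomLax(\Dn{1}, B) \xrightarrow{\pi_1} B$ of $\id{B}$, pulled back along $u : A \to B$, and whose left-hand column is $A \xrightarrow{j} B \comma u \xrightarrow{p_2} A$. So, in the notation of Proposition~\ref{prop:pullback_retr}, it suffices to show that $\iota$ is a strong right oplax transformation retract above its source with retraction $\pi_1$: the asserted structure on $j$, including the identity $p_2 j = \id{A}$, will then follow formally. That $\pi_1$ is a retraction of $\iota$ is immediate, since $\pi_1 \iota = \HomLax(\tau, B) \circ \HomLax(\kappa, B) = \HomLax(\kappa\tau, B) = \id{B}$ because $\kappa\tau = \id{\Dn{0}}$.

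It remains to produce an oplax transformation $\theta : \id{\HomLax(\Dn{1}, B)} \tod \iota\pi_1$ which is strong ($\theta \comp \iota = \id{\iota}$) and above its source ($\pi_1 \comp \theta = \id{\pi_1}$). I would build $\theta$ from a single auxiliary \oo-functor $m : \Dn{1} \otimes \Dn{1} \to \Dn{1}$, the one sending an object $(i,j)$ to $\max(i,j)$. This is a well-defined strict \oo-functor: the two $1$-cells bounding the non-trivial $2$-cell of $\Dn{1} \otimes \Dn{1}$ both map to the unique non-identity $1$-cell of $\Dn{1}$, so the $2$-cell is necessarily sent to an identity. Writing $\varepsilon : \Dn{1} \otimes \HomLax(\Dn{1}, B) \to B$ for the counit of the adjunction $\Dn{1} \otimes \var \dashv \HomLax(\Dn{1}, \var)$, I define $\theta$ to be the oplax transformation whose classifying \oo-functor $\Dn{1} \otimes \HomLax(\Dn{1}, B) \to \HomLax(\Dn{1}, B)$ is adjoint to the composite
\[
  (\Dn{1} \otimes \Dn{1}) \otimes \HomLax(\Dn{1}, B)
    \xrightarrow{m \otimes \HomLax(\Dn{1}, B)}
    \Dn{1} \otimes \HomLax(\Dn{1}, B)
    \xrightarrow{\ \varepsilon\ } B.
\]

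The endpoint and compatibility conditions then reduce, through the dinaturality of $\varepsilon$, to elementary identities for $m$. The two slices of $\theta$, obtained by restricting the ``time'' variable of $m$ along $\sigma$ and $\tau$, compute to $\HomLax(m(\var, 0), B) = \id{\HomLax(\Dn{1}, B)}$, since $m(\var, 0) = \id{\Dn{1}}$, and to $\HomLax(m(\var, 1), B) = \HomLax(\tau\kappa, B) = \iota\pi_1$, since $m(\var, 1) = \tau\kappa$; this identifies the source and target of $\theta$ as required. Being above the source reduces to $m(1, \var) = \tau\kappa$, again clear from the definition of $m$, while strongness holds because $\iota = \HomLax(\kappa, B)$ forces $\varepsilon \circ (\Dn{1} \otimes \iota)$ to factor through the collapse $\kappa$, so that $\theta \comp \iota$ is forced to coincide with the identity oplax transformation of $\iota$. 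The reduction to $\iota$ via Proposition~\ref{prop:pullback_retr} is what makes the statement tractable; I expect the main obstacle to be the construction and the three verifications for $\theta$ on the arrow \oo-category $\HomLax(\Dn{1}, B)$, where one must turn the combinatorics of $m$ and of the counit $\varepsilon$ into honest equalities of oplax transformations, using the descriptions of whiskering and of the identity oplax transformation recalled in paragraph~\ref{paragr:def_trans}.
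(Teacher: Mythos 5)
Your proposal is correct, and it follows the paper's decomposition exactly: both arguments reduce, via Proposition~\ref{prop:pullback_retr} applied to the stack of pullback squares in paragraph~\ref{paragr:fact}, to showing that $\iota = \HomLax(\kappa, B)$ is a strong right oplax transformation retract above its source with retraction $\pi_1 = \HomLax(\tau, B)$. Where you genuinely diverge is in how that last fact is obtained. The paper observes that, since $1$ is a terminal object of $\Dn{1}$, the \oo-functor $\tau : \Dn{0} \to \Dn{1}$ is a strong right \emph{lax} transformation retract above its source with retraction $\kappa$, and then transports this structure through the sesquifunctor $\HomLax(\var, B) : (\ooCatLax)^\op \to \ooCatOpLax$ of \cite[Example~C.23.(f)]{AraMaltsiJoint}, which reverses $1$-cells and exchanges lax for oplax transformations. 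You instead construct the required oplax transformation from the identity of $\HomLax(\Dn{1},B)$ to $\iota\pi_1$ by hand, out of $m = \max : \Dn{1} \otimes \Dn{1} \to \Dn{1}$ and the counit of $\Dn{1} \otimes \var \dashv \HomLax(\Dn{1}, \var)$; your three verifications (the endpoints via $m(\var,0)=\id{\Dn{1}}$ and $m(\var,1)=\tau\kappa$, above-the-source via $m(1,\var)=\tau\kappa$, and strongness via the factorization of $\varepsilon \circ (\Dn{1} \otimes \iota)$ through the collapse to $\Dn{0} \otimes B$) are all sound, the extranaturality of $\varepsilon$ being exactly the identity $\varepsilon \circ (f \otimes \id{}) = \varepsilon \circ (\Dn{1} \otimes \HomLax(f,B))$ that you need. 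It is worth noting that your $m$, read with its second factor as the time direction, is precisely the lax transformation $\id{\Dn{1}} \tod \tau\kappa$ witnessing the terminality of $1$, and that your counit construction is exactly what the sesquifunctor $\HomLax(\var,B)$ does to it: your proof is a self-contained unwinding of the paper's citation. What it buys is independence from the external reference; what it costs is the explicit whiskering bookkeeping, which you have carried out correctly.
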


\begin{proof}
  Consider the diagram of pullback squares of the previous paragraph.
  As $1$ is a terminal object of the $1$-category $\Dn{1}$, the \oo-functor
  $\tau$ is a strong right lax transformation retract above its source with
  retraction $\kappa$. For formal reasons
  (see~\cite[Example~C.23.(f)]{AraMaltsiJoint}), the functor
  $\HomLax({-}, B)$ extends to a sesquifunctor $(\ooCatLax)^\op \to
  \ooCatOpLax$, where $\C^\op$, for $\C$ a sesquicategory, denotes the
  sesquicategory obtained from $\C$ by reversing the $1$-cells.
  This implies that $\iota = \HomLax(\kappa, B)$ is a strong right
  oplax transformation retract above its source with retraction~$\pi_1 =
  \HomLax(\tau, B)$. The result then follows from
  Proposition~\ref{prop:pullback_retr}.
\end{proof}

\begin{paragraph}\label{paragr:fact_dual}
  Similarly, any \oo-functor $u : A \to B$ factors as
  \[
    \xymatrix{
      A \ar[r]^-{j'} & u \comma B \ar[r]^-{p_2} & B \pbox{,}
    }
  \]
  where $j'$ is a strong \emph{left} oplax transformation retract above its source
  with retraction~$p_1 : u \comma B \to A$.
  This can be proven either by adapting the previous proof or by a duality
  argument involving the automorphism $C \mapsto C^\o$ of $\ooCat$ (see
  paragraph~\ref{paragr:notation}).
\end{paragraph}

\section{A Quillen Theorem B for \pdfoo-categories}

\begin{paragraph}
  We will denote by $N : \ooCat \to \pref{\cDelta}$ the so-called
  \ndef{Street nerve}, introduced by Street in~\cite{StreetOrient}. We will briefly
  recall in Appendix~\ref{app:contr_orient} (see
  paragraph~\ref{paragr:def_orientals}) one of its definition using Steiner's
  theory~\cite{Steiner}. This definition is not needed in this section and
  we will recall all the properties we will use.

  This nerve functor is induced by a cosimplicial object $\cO : \cDelta \to
  \ooCat$ sending $\Deltan{n}$ to the so-called \ndef{$n$-th oriental}
  $\On{n}$. Here are pictures of orientals in low dimension:
  \[
    \shorthandoff{;}
    \On{0} = \Dn{0} = \xymatrix{\{0\}}, \qquad
    \On{1} = \Dn{1} = \xymatrix{0 \ar[r] & 1},
    \qquad
    \On{2} =
    \raisebox{1.5pc}{
    $\xymatrix{
      & 2
      \\
      0 \ar[r] \ar[ur]_{}="s" & 1 \ar[u]
      \ar@{}"s";[]_(0.05){}="ss"_(0.85){}="tt"
      \ar@2"ss";"tt"
    }$
    }
    \text{,}
  \]
  \[
    \shorthandoff{;}
    \On{3} =
    \raisebox{1.5pc}{
    $\xymatrix{
      0 \ar[r]_(0.60){}="03" \ar[d] \ar[dr]_{}="02"_(0.60){}="02'" &
      3
      &
      0 \ar[r]_(0.40){}="03'" \ar[d]_{}="t3"
        &
      3
      \\
      1 \ar[r] & 2 \ar[u]_{}="s3"
      &
      1 \ar[r] \ar[ur]_{}="13"_(0.40){}="13'" & 2 \ar[u]
      \ar@{}"s3";"t3"_(0.20){}="ss3"_(0.80){}="tt3"
      \ar@3"ss3";"tt3"
      \ar@{}"13";[]_(0.05){}="s123"_(0.85){}="t123"
      \ar@2"s123";"t123"
      \ar@{}"02";[lll]_(0.05){}="s012"_(0.85){}="t012"
      \ar@2"s012";"t012"
      \ar@{}"03'";"13'"_(0.05){}="s013"_(0.85){}="t013"
      \ar@2"s013";"t013"
      \ar@{}"03";"02'"_(0.05){}="s023"_(0.85){}="t023"
      \ar@2"s023";"t023"
      \\
    }$
    }
    \text{.}
  \]
  By definition, if $C$ is an \oo-category, we have $(NC)_p =
  \Hom_{\ooCat}(\On{p}, C)$. When $C$ is a $1$-category, then $NC$ coincides
  with the classical nerve functor. As Street's nerve is induced by a
  cosimplicial object, it admits as a left adjoint the Kan extension of this
  cosimplicial object along the Yoneda embedding. In particular, it
  preserves limits.
\end{paragraph}

\begin{paragraph}
  We will say that an \oo-functor $u : A \to B$ is a \ndef{Thomason
  equivalence} if its Street's nerve $Nu : NA \to NB$ is a simplicial weak
  equivalence.
\end{paragraph}

\begin{paragraph}\label{paragr:nerve_trans}
  Let $u, v : A \to B$ be two \oo-functors and let $\alpha : u \tod v$ be an
  oplax transformation. We constructed in
  \cite[Appendix~A]{AraMaltsiThmAII}, with Maltsiniotis, a simplicial
  homotopy~$N\alpha$ from $Nu$ to $Nv$. We will briefly recall the
  definition of $N\alpha$ in Appendix~\ref{app:contr_orient} (see
  paragraph~\ref{paragr:desc_Nalpha}) but all we will need about $N \alpha$ in
  this section is the following proposition.
\end{paragraph}

\begin{proposition}\label{prop:nerve_sesqui}
  Let $u, v : A \to B$ be two \oo-functors and let $\alpha : u \tod v$ be an
  oplax transformation.
  \begin{enumerate}
    \item If $w : B \to C$ is an \oo-functor, then we have
     \[ N(w \comp \alpha) = N(w)N(\alpha). \]
    \item If $w : C \to A$ is an \oo-functor, then we have
      \[ N(\alpha \comp w) = N(\alpha)(\Delta_1 \times N(w)). \]
  \end{enumerate}
\end{proposition}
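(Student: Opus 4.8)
The plan is to unwind the construction of the homotopy $N\alpha$ recalled in paragraph~\ref{paragr:desc_Nalpha} and to reduce both identities to the functoriality of $N$ and the naturality of the ingredients of that construction. Write $\widehat\alpha : \Dn{1} \otimes A \to B$ for the \oo-functor corresponding to the oplax transformation $\alpha$ by the adjunction of paragraph~\ref{paragr:def_trans}. The point is that $N\alpha$ is the composite
\[
  \Deltan{1} \times NA \xrightarrow{\ c_A\ } N(\Dn{1} \otimes A)
  \xrightarrow{\ N\widehat\alpha\ } NB,
\]
where, using the identification $N\Dn{1} \simeq \Deltan{1}$, the morphism $c_A$ is the comparison map expressing the lax monoidality of the Street nerve with respect to the Gray tensor product and the cartesian product of simplicial sets, and where $N\widehat\alpha$ is the image under $N$ of the \oo-functor $\widehat\alpha$. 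The single fact I shall exploit is that $c_A$ is natural in $A$.

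For assertion~(a), recall that $w \comp \alpha$ is, as an \oo-functor $\Dn{1} \otimes A \to C$, nothing but $w \circ \widehat\alpha$. Hence, by functoriality of $N$,
\[
  N(w \comp \alpha) = N(w \circ \widehat\alpha) \circ c_A
    = N(w) \circ N(\widehat\alpha) \circ c_A
    = N(w) \circ N\alpha = N(w)N(\alpha).
\]
For assertion~(b), recall that $\alpha \comp w$ is, as an \oo-functor $\Dn{1} \otimes C \to B$, the composite $\widehat\alpha \circ (\Dn{1} \otimes w)$, whence
\[
  N(\alpha \comp w) = N\widehat\alpha \circ N(\Dn{1} \otimes w) \circ c_C.
\]
Naturality of $c$ along $w : C \to A$ gives the identity $N(\Dn{1} \otimes w) \circ c_C = c_A \circ (\Deltan{1} \times N(w))$, and substituting it yields
\[
  N(\alpha \comp w) = N\widehat\alpha \circ c_A \circ (\Deltan{1} \times N(w))
    = N\alpha \circ (\Deltan{1} \times N(w)) = N(\alpha)(\Deltan{1} \times N(w)),
\]
as desired.

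The whole argument rests on the precise shape of the construction of $N\alpha$, so the main point to secure is that the construction of paragraph~\ref{paragr:desc_Nalpha} does factor as $N\widehat\alpha \circ c_A$ through a comparison morphism $c_A$ that is natural in $A$. If that construction is instead given by an explicit simplex\nobreakdash-level formula built from Steiner's theory and the combinatorics of the orientals, the task becomes to read off this factorization from the formula and to check the naturality of $c_A$. Among the two halves, left whiskering (assertion~(a)) is purely formal, being literally postcomposition and hence handled by the functoriality of $N$ alone; the only nonformal verification is the naturality of $c_A$ with respect to an arbitrary \oo-functor $w$, which is exactly what makes assertion~(b) the substantive part of the statement.
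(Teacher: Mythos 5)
Your argument is correct, but note that the paper itself does not prove this proposition: it simply cites \cite[Proposition A.14]{AraMaltsiThmAII}, so there is no in-paper proof to compare against and your reconstruction stands on its own. The one point you flag as needing to be secured does check out against the explicit formula recalled in paragraph~\ref{paragr:desc_Nalpha}: there, $N\alpha$ sends a $p$-simplex $(\phi,x)$ to the composite $\widehat\alpha\circ(\Dn{1}\otimes x)\circ\nu(g_\phi)$, so one may take $c_A(\phi,x)=(\Dn{1}\otimes x)\circ\nu(g_\phi)$; since $g_\phi$ depends only on $\phi$ and the simplicial degree, and $A$ enters only through the functor $\Dn{1}\otimes({-})$ applied to $x:\On{p}\to A$, the naturality $N(\Dn{1}\otimes w)\circ c_C=c_A\circ(\Deltan{1}\times N(w))$ is immediate, and assertions (a) and (b) follow exactly as you say by unwinding the definitions of $w\comp\alpha=w\circ\widehat\alpha$ and $\alpha\comp w=\widehat\alpha\circ(\Dn{1}\otimes w)$. (Whether $c_A$ is precisely the lax monoidality constraint of $N$ is immaterial here; only the factorization and its naturality are used, and both are visible from the formula. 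One could also simply verify the two identities simplexwise without naming $c_A$ at all, which amounts to the same computation.)
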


\begin{proof}
  This is \cite[Proposition A.14]{AraMaltsiThmAII}.
\end{proof}

\begin{proposition}\label{prop:retr_Thomason}
  If $(i : A \to B, r, \alpha)$ is a strong left \resp{right} oplax transformation
  retract, then $(Ni : NA \to NB, Nr, N\alpha)$ is a strong left
  \resp{right} deformation retract and in particular $i$ and $r$ are
  Thomason equivalences.
\end{proposition}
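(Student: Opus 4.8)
The plan is simply to apply the Street nerve to the three pieces of structure and to verify the three axioms of a strong left \resp{right} deformation retract directly, setting $h = N\alpha$. The only non\nbd-formal inputs are the compatibility of $N$ with whiskering of oplax transformations recorded in Proposition~\ref{prop:nerve_sesqui}, and the fact, recalled in paragraph~\ref{paragr:nerve_trans}, that for an oplax transformation $\alpha \colon u \tod v$ the map $N\alpha$ is a simplicial homotopy from $Nu$ to $Nv$. I would first treat the left case; the right case is strictly symmetric.

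For the retraction axiom, functoriality of $N$ applied to $ri = \id{A}$ gives $(Nr)(Ni) = \id{NA}$, so $Nr$ is a retraction of $Ni$. For the homotopy axiom, since $\alpha$ is an oplax transformation from $ir$ to $\id{B}$, paragraph~\ref{paragr:nerve_trans} tells us that $h = N\alpha \colon \Deltan{1} \times NB \to NB$ is a simplicial homotopy from $N(ir) = (Ni)(Nr)$ to $N(\id{B}) = \id{NB}$, which is precisely what is required.

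The last axiom, $h(\Deltan{1} \times Ni) = (Ni)\,p_2$, is where the \emph{strongness} hypothesis is used; by definition it says $\alpha \comp i = \id{i}$. Applying $N$ and invoking the second formula of Proposition~\ref{prop:nerve_sesqui} (with $w = i$), I obtain
\[
  N(\alpha)(\Deltan{1} \times Ni) = N(\alpha \comp i) = N(\id{i}).
\]
It thus remains to identify $N(\id{i})$ with the constant homotopy $(Ni)\,p_2$, where $p_2 \colon \Deltan{1} \times NA \to NA$ is the projection. This is the one point that is not purely formal and is, I expect, the main obstacle: it should follow directly from the explicit description of $N\alpha$ recalled in the appendix (paragraph~\ref{paragr:desc_Nalpha}), together with the fact that the identity oplax transformation $\id{i}$ is, by definition, the one associated with an \oo-functor factoring through the collapse $\kappa \otimes A \colon \Deltan{1} \otimes A \to \Deltan{0} \otimes A \cong A$, so that its nerve factors through the corresponding simplicial projection.

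Finally, a strong deformation retract exhibits $Ni$ and $Nr$ as mutually inverse up to homotopy, hence both are simplicial homotopy equivalences and in particular weak equivalences; by definition $i$ and $r$ are then Thomason equivalences. For the right case one argues identically, the only difference being that $N\alpha$ is now a homotopy from $\id{NB}$ to $(Ni)(Nr)$, as demanded by the notion of a strong right deformation retract.
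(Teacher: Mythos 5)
Your proof is correct and uses exactly the two ingredients the paper cites in its one-line proof, namely that $N\alpha$ is a simplicial homotopy from $Nu$ to $Nv$ (paragraph~\ref{paragr:nerve_trans}) and the whiskering compatibilities of Proposition~\ref{prop:nerve_sesqui}; you simply spell out the verification of the three axioms that the paper leaves implicit. The one point you flag, the identification $N(\id{i}) = (Ni)\,p_2$, does indeed check out from the definition of the identity oplax transformation via $\kappa \otimes A$ together with the explicit formula for $g_\phi$ in paragraph~\ref{paragr:desc_Nalpha}.
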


\begin{proof}
  This follows from paragraph~\ref{paragr:nerve_trans} and the previous
  proposition.
\end{proof}

\begin{paragraph}
  We will say that a commutative square
  \[
    \xymatrix{
      A \ar[d]_{u} \ar[r]^v & A' \ar[d]^{u'} \\
      B \ar[r]_w & B'
    }
  \]
  in $\ooCat$ is \ndef{a homotopy pullback square} if the commutative square
  \[
    \xymatrix{
      NA \ar[d]_{Nu} \ar[r]^{Nv} & NA' \ar[d]^{Nu'} \\
      NB \ar[r]_{Nw} & NB'
    }
  \]
  of simplicial sets is a homotopy pullback square (as in
  paragraph~\ref{paragr:simpl_we}). Homotopy pullback squares in $\ooCat$
  inherit many properties of homotopy pullback squares in simplicial sets:
  for instance they compose, and a square as above in which $u$ and $u'$ are
  both Thomason equivalences is a homotopy pullback square.
\end{paragraph}

\begin{remark}
  One can show that a commutative square in $\ooCat$ is a homotopy pullback
  square in the sense of the previous paragraph if and only if it induces a
  pullback square in the weak $(\infty, 1)$-category obtained from $\ooCat$
  by weakly inverting Thomason equivalences. This follows from (a mild
  generalization) of~\hbox{\cite[Theorem~5.6]{Gagna}}.
\end{remark}

\goodbreak

We now introduce the notion corresponding to the hypothesis of Theorem B.

\begin{paragraph}
  Let $u : A \to B$ be an \oo-functor. We will say that $u$ is
  \ndef{colocally homotopically constant} if, for every $1$-cell $f : b \to
  b'$ of $B$, the \oo-functor $\cotr{A}{f} : \cotr{A}{b'} \to \cotr{A}{b}$
  is a Thomason equivalence.
\end{paragraph}

\goodbreak

The following proposition is the crucial step in our proof of the
\oo-categorical Theorem~B.

\begin{proposition}\label{prop:lemma}
  If $u : A \to B$ is a colocally homotopically constant \oo-functor, then
  any pullback square
  \[
    \xymatrix{
      C \pullbackcorner \ar[d] \ar[r] & B \comma u \ar[d]^{p_1} \\
      D \ar[r] & B
    }
  \]
  is a homotopy pullback square.
\end{proposition}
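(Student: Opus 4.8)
The plan is to reduce the statement to Rezk's simplicial criterion (Proposition~\ref{prop:simpl_thmB}) applied to the morphism $Np_1 : N(B \comma u) \to NB$. Since the Street nerve preserves limits, it sends the pullback square of the statement to a pullback square of simplicial sets along $Np_1$, and homotopy pullback squares in $\ooCat$ are by definition detected by $N$. It therefore suffices to prove that $Np_1$ satisfies condition~\ref{cond:pullback} of Proposition~\ref{prop:simpl_thmB}, and by that proposition it is enough to verify the a priori weaker condition~\ref{cond:loc_const_zero}.

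So I fix $m \ge 0$, a simplicial map $\Deltan{m} \to NB$ and a vertex $i$ of $\Deltan{m}$, and form the two nested pullbacks of $Np_1$ over $\Deltan{0} \to \Deltan{m} \to NB$. Using that $N$ preserves pullbacks, together with the identifications $\Deltan{m} = N\On{m}$ and $\Deltan{0} = N\Dn{0}$ and the adjunction between $N$ and its left adjoint, the map $\Deltan{m} \to NB$ is of the form $Ng$ for an \oo-functor $g : \On{m} \to B$; the right-hand pullback is then $N(g \comma u)$ and the left-hand pullback is $N(\cotr{A}{g(i)})$, where $g(i)$ is the image of the object $i$ and $\cotr{A}{g(i)} = g(i) \comma u$. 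Under these identifications the comparison morphism $u'$ is $N$ applied to the \oo-functor $(i, \id{g(i)}) \comma u : \cotr{A}{g(i)} \to g \comma u$ of paragraph~\ref{paragr:def_comma_morph} (this uses the description of $(w, \id{}) \comma v$ as a fiber product given there). The task is thus to show that $(i, \id{g(i)}) \comma u$ is a Thomason equivalence for every object $i$ of $\On{m}$.

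The key input is the contraction of the oriental $\On{m}$ furnished by Steiner's theory and established in the appendix: the inclusion of the initial object $0 : \Dn{0} \to \On{m}$ is a strong left oplax transformation retract. Applying part~\ref{item:comma_retr_a} of Proposition~\ref{prop:comma_retr} to this retract (with the right-hand functor taken to be $u$) shows that $(0, \id{g(0)}) \comma u$ is itself a strong left oplax transformation retract, hence a Thomason equivalence by Proposition~\ref{prop:retr_Thomason}; this settles the case $i = 0$. For an arbitrary object $i$, I choose a $1$-cell $f : 0 \to i$ of $\On{m}$ and apply Proposition~\ref{prop:comma_triangle} (with source \oo-category $\On{m}$ and functors $g$ and $u$): it provides an oplax transformation filling the triangle whose edges are $(i, \id{g(i)}) \comma u$, $(0, \id{g(0)}) \comma u$ and $\cotr{A}{g(f)} : \cotr{A}{g(i)} \to \cotr{A}{g(0)}$. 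Applying $N$ turns this oplax transformation into a simplicial homotopy, so that $N((i, \id{g(i)}) \comma u)$ is homotopic to the composite $N((0, \id{g(0)}) \comma u) \circ N(\cotr{A}{g(f)})$. The first factor is a weak equivalence by the case $i = 0$, and the second is a weak equivalence because $u$ is colocally homotopically constant, applied to the $1$-cell $g(f) : g(0) \to g(i)$ of $B$. A morphism homotopic to a weak equivalence being a weak equivalence, I conclude that $u' = N((i, \id{g(i)}) \comma u)$ is a weak equivalence, which is exactly condition~\ref{cond:loc_const_zero}.

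The main obstacle is the production of the contraction of $\On{m}$: the orientals are merely freely generated \oo-categories, and exhibiting an explicit strong left oplax transformation retract onto the initial object is what requires Steiner's theory of augmented directed complexes, which is why it is deferred to the appendix. Everything else is the formal interplay between the sesquifunctoriality of the comma construction (Propositions~\ref{prop:comma_retr} and~\ref{prop:comma_triangle}) and the good homotopical behavior of the Street nerve on oplax transformations (Proposition~\ref{prop:retr_Thomason}), packaged by Rezk's criterion.
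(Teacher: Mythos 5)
Your overall strategy (reduction via Rezk's criterion to condition~\ref{cond:loc_const_zero}, the case of the initial vertex via a contraction of the oriental and Proposition~\ref{prop:comma_retr}, the general vertex via Proposition~\ref{prop:comma_triangle} and the colocal constancy hypothesis) is the same as the paper's, but there is a genuine gap at the point where you write ``the identifications $\Deltan{m} = N\On{m}$ and $\Deltan{0} = N\Dn{0}$''. The second identification is correct, but the first is false: the Street nerve of the $m$-th oriental is strictly larger than the standard $m$-simplex (already $N(\On{2})_1$ contains the freely added composite $1$-cell of $\On{2}$, which is not in the image of $\Deltan{2}$). Consequently the simplicial pullback $P = \Deltan{m} \times_{NB} N(B \comma u)$ appearing in Rezk's condition is \emph{not} $N(x \comma u) = N(\On{m}) \times_{NB} N(B\comma u)$; there is only a comparison map $g : P \to N(x \comma u)$ induced by the unit $\eta : \Deltan{m} \to N(\On{m})$, and the morphism $u'$ you must show to be a weak equivalence satisfies $N(i_\ast) = g \circ u'$ rather than $u' = N(i_\ast)$. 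Knowing that $N(i_\ast)$ is a weak equivalence therefore does not yet give the conclusion; you also need $g$ to be a weak equivalence, and you cannot get that from $\eta$ being a weak equivalence without already knowing that $N(p_1)$ turns pullbacks into homotopy pullbacks, which is circular.

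The paper closes this gap by a three-step argument: it shows that $f_0 : N(x(0) \comma u) \to P$ is a strong left deformation retract, by exhibiting it as a fiber product of strong deformation retracts (Proposition~\ref{prop:fib_prod_retr}); this requires not just the contraction $(r,\alpha)$ of $\On{m}$ onto its initial object but its \emph{compatibility} with the unique simplicial contraction $k$ of $\Deltan{m}$ through $\eta$, which is the content of Proposition~\ref{prop:trans_compat} and the reason the explicit chain-level formulas of the appendix are needed. From $N(0_\ast) = g f_0$ one then deduces that $g$ is a weak equivalence, and only then does the homotopy of Proposition~\ref{prop:comma_triangle} yield that $f_i$ is a weak equivalence for general $i$. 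Your proposal correctly identifies the contraction of the oriental as the key nontrivial input, but misses that the delicate point is not merely its existence: it is its compatibility with the simplicial contraction of $\Deltan{m}$, without which the passage from $\Deltan{m}$ to $N(\On{m})$ cannot be controlled.
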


\begin{proof}
  Since the nerve functor preserves fiber products, by
  Proposition~\ref{prop:simpl_thmB}, it suffices to show that $N(p_1)$
  satisfies condition~\ref{cond:loc_const_zero} of this proposition. So consider a
  diagram of pullback squares of the form
  \[
    \xymatrix{
      Q_i \pullbackcorner \ar[r]^{f_i} \ar[d] & P \pullbackcorner \ar[d] \ar[r] &
        N(B \comma u) \ar[d]^{N(p_1)} \\
        \Deltan{0} \ar[r]_i & \Deltan{m} \ar[r]_x & NB \pbox{.}
    }
  \]
  By Proposition~\ref{prop:comma_fib_prod}, we also have a diagram of
  pullback squares
  \[
    \xymatrix{
      x(i) \comma u \pullbackcorner \ar[r]^{i_\ast} \ar[d]_{p_1} & x \comma u
      \pullbackcorner \ar[d]_{p_1} \ar[r] &
        B \comma u \ar[d]^{p_1} \\
        \On{0} \ar[r]_i & \On{m} \ar[r]_x & B \pbox{,}
    }
  \]
  where $i_\ast = (i, \id{x(i)}) \comma u$. Using again the fact that the
  nerve functor preserves fiber products, we get a canonical isomorphism
  between $Q_i$ and $N(x(i) \comma u)$. We thus have to
  show that
 \[ f_i : N(x(i) \comma u) \to P \] 
  is a weak equivalence.

  Denote by $\eta : \Deltan{m} \to N(\On{m})$ the adjunction
  morphism. By one of the triangular identities, the composite
  \[
    \xymatrix{\Deltan{m} \ar[r]^-\eta & N(\On{m}) \ar[r]^-{Nx} & NB}
  \]
  is $x : \Deltan{m} \to NB$ and we get a diagram of pullback squares
  \[
    \xymatrix{
      N(x(i) \comma u) \pullbackcorner \ar[r]^-{f_i} \ar[d]_{N(p_1)} &
      P \pullbackcorner \ar[r]^-g \ar[d] & N(x \comma u)
      \pullbackcorner \ar[d]_{N(p_1)} \ar[r] &
      N(B \comma u) \ar[d]^{N(p_1)} \\
      \Deltan{0} \ar[r]_-i &
      \Deltan{m} \ar[r]_-\eta & N(\On{m}) \ar[r]_-{Nx} & NB \pbox{.}
    }
  \]
  Note that we have $gf_i = N(i_\ast)$.

  To prove that $f_i$ is a weak equivalence, we proceed in three steps:
  \begin{enumerate}[wide,label=({\arabic*})]
    \item We show that $f_0$ is a weak equivalence. To do so,
  we will use Proposition~\ref{prop:fib_prod_retr} to prove that
  \[ f_0 : N(x(0) \comma u) \to P \]
  is a strong left deformation retract. To begin with, note that $f_0$ can
  be identified with the fiber product of the vertical maps of the
  commutative diagram
  \[
    \xymatrix@C=2.5pc{
      N(x(0) \comma u) \ar[r]^-{N(p_1)}
      \ar[d]_{N(0_\ast)}
      &
      N(\On{0})
      \ar[d]_{N(0)}
      &
      \Deltan{0} \ar[l]_-\eta \ar[d]^0
      \\
      N(x \comma u) \ar[r]_{N(p_1)}
      &
      N(\On{m})
      &
      \Deltan{m} \ar[l]^-\eta \pbox{,}
    }
  \]
  the left square being commutative by
  paragraph~\ref{paragr:def_comma_morph} and $N(\On{0})$ being isomorphic
  to~$\Deltan{0}$. The morphism $0 : \Deltan{0} \to \Deltan{m}$ is of course
  a strong left deformation retract, with a unique retraction and a unique
  simplicial homotopy $k$. We will prove in Appendix~\ref{app:contr_orient}
  (see in particular Propositions~\ref{prop:contr_orient}
  and~\ref{prop:trans_compat}) that there exists a structure of strong left
  oplax transformation retract $(r, \alpha)$ on $0 : \On{0} \to \On{m}$
  making the square
  \[
    \xymatrix@C=3pc{
      \Deltan{1} \times N(\On{m}) \ar[d]_{N\alpha}
      &
      \Deltan{1} \times \Deltan{m} \ar[l]_-{\Deltan{1} \times \eta} \ar[d]^{k}
      \\
      N(\On{m})
      &
      \Deltan{m} \ar[l]^-\eta
    }
  \]
  commute. In particular, by Proposition~\ref{prop:retr_Thomason},
  the morphism $N(0) : N(\On{0}) \to N(\On{m})$ is a strong left deformation
  retract with retraction $Nr$ and homotopy $N\alpha$.
  Proposition~\ref{prop:comma_retr} then implies that
  there exists a structure of strong left oplax transformation
  retract~$(r', \gamma)$ on $0_\ast : x(0) \comma u \to x \comma u$
  satisfying $p_1 \comp \gamma = \alpha \comp p_1$. In particular, again by
  Proposition~\ref{prop:retr_Thomason}, the morphism $N(0_\ast) : N(x(0)
  \comma u) \to N(x \comma u)$ is a strong left deformation retract with
  retraction~$Nr'$ and homotopy~$N\gamma$. By
  Proposition~\ref{prop:nerve_sesqui}, applying~$N$ to the equality $p_1
  \comp \gamma = \alpha \comp p_1$ gives the commutativity of the diagram
  \[
    \xymatrix@C=4pc{
      \Deltan{1} \times N(x \comma u) \ar[d]_{N\gamma}
      \ar[r]^-{\Deltan{1} \times N(p_1)}
      &
      \Deltan{1} \times N(\On{m}) \ar[d]^{N\alpha}
      \\
      N(x \comma u) \ar[r]_{N(p_1)}
      &
      N(\On{m}) \pbox{.}
    }
  \]
  We are thus in position to apply Proposition~\ref{prop:fib_prod_retr} and
  we get that $f_0$ is a strong left deformation retract and hence a weak
  equivalence.

  \item We show that $g$ is a weak equivalence. We proved in the
  previous step that $0_\ast$ is a strong left oplax transformation retract
  and $N(0_\ast)$ is thus a weak equivalence. As~$N(0_\ast) = gf_0$, this
  implies that $g$ is a weak equivalence.

  \item We show that $f_i$ is a weak equivalence. Let $l$ be any
  $1$-cell from $0$ to $i$ in $\On{m}$. Using
  Proposition~\ref{prop:comma_triangle}, we get an oplax transformation
  \[
    \shorthandoff{;}
    \xymatrix@C=1.5pc{
      \cotr{A}{x(i)} \ar[rr]^{\cotr{A}{x(l)}}
      \ar[dr]_(0.40){i_\ast}_(.60){}="f" & &
      \cotr{A}{x(0)} \ar[dl]^(0.40){0_\ast} \\
      & x \comma u
      \ar@{}"f";[ur]_(.15){}="ff"
      \ar@{}"f";[ur]_(.55){}="oo"
      \ar@<-0.0ex>@2"oo";"ff"
      & \pbox{.}
    }
  \]
  We already proved that $0_\ast$ is a Thomason equivalence and
  $\cotr{A}{x(l)}$ is a Thomason equivalence by hypothesis. Using
  paragraph~\ref{paragr:nerve_trans}, we get that $N(i_\ast)$ is homotopic
  to a weak equivalence and is thus a weak equivalence. The equality
  $N(i_\ast) = gf_i$ and the fact proven above that $g$ is a weak
  equivalence then implies that $f_i$ is a weak equivalence, thereby ending
  the proof. \qedhere
  \end{enumerate}
\end{proof}

\begin{theorem}\label{thm:comma_homot}
  Let
  \[
    \xymatrix{
      A \ar[r]^u & C & B \ar[l]_v
    }
  \]
  be two \oo-functors. If $v$ is colocally homotopically constant,
  then the pullback square
  \[
    \xymatrix{
      u \comma v \pullbackcorner \ar[d] \ar[r] & B \ar[d]^v \\
      u \comma C \ar[r]_{p_2} & C
    }
  \]
  is a homotopy pullback square.
\end{theorem}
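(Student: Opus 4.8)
The plan is to reduce the statement about the comma construction $u \comma v$ to the special case already established in Proposition~\ref{prop:lemma}, exploiting the fact that $u \comma v$ can be obtained from $C \comma v$ (equivalently from $B \comma v$ via slices) by a base change along $u$. Concretely, I would first recall from Proposition~\ref{prop:comma_fib_prod} that there is a pullback square
\[
  \xymatrix@C=3.5pc{
    u \comma v \pullbackcorner \ar[d]_{u \comma (\id{v}, v)} \ar[r]^-{p_2} &
    B \ar[d]^{v} \\
    u \comma C \ar[r]_-{p_2} & C \pbox{,}
  }
\]
which is exactly the square whose homotopy-pullback property we must prove. So the content is to show that this pullback is sent by $N$ to a homotopy pullback square.

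The key idea is to factor $v : B \to C$ through the mapping-space factorization of paragraph~\ref{paragr:fact}, namely $v = p_1 \o j$ with $j : B \to C \comma v$ a strong right oplax transformation retract (above its source) and $p_1 : C \comma v \to C$. By Proposition~\ref{prop:retr_Thomason} the retract $j$ is a Thomason equivalence, and one should check that the colocal homotopical constancy of $v$ transfers to the projection $p_1 : C \comma v \to C$; this is precisely the map to which Proposition~\ref{prop:lemma} applies (with the roles of $B$ and the target matched appropriately, taking the comma $C \comma v$ in place of $B \comma u$). The strategy is then to build the pullback of $u \comma v \to u \comma C \to C$ in two stages: base-change $p_1 : C \comma v \to C$ along the forgetful map $u \comma C \to C$ to land inside the hypothesis of Proposition~\ref{prop:lemma}, conclude that the resulting square is a homotopy pullback, and then use the retract $j$ together with the two-out-of-three / composition properties of homotopy pullback squares recorded in the paragraph defining homotopy pullbacks in $\ooCat$ to replace $C \comma v$ by $B$ without disturbing the homotopy-pullback property.

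More precisely, I would assemble a diagram of pullback squares
\[
  \xymatrix@C=3pc{
    u \comma v \ar[r] \ar[d] & u \comma (C \comma v) \ar[r] \ar[d] & C \comma v \ar[d]^{p_1} \\
    u \comma C \ar@{=}[r] & u \comma C \ar[r] & C \pbox{,}
  }
\]
in which the right-hand square is a homotopy pullback by Proposition~\ref{prop:lemma} (applied to $p_1 : C \comma v \to C$, which is colocally homotopically constant because $v$ is), and the left-hand square is induced by the Thomason equivalence $j$ and hence has both horizontal maps Thomason equivalences, making it a homotopy pullback by the final remark of the paragraph on homotopy pullbacks. Since homotopy pullback squares compose, the outer rectangle is a homotopy pullback, and identifying the outer rectangle with the square of the statement via Proposition~\ref{prop:comma_fib_prod} finishes the argument.

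The main obstacle I anticipate is the bookkeeping needed to verify that $p_1 : C \comma v \to C$ is colocally homotopically constant, i.e.\ that the fibers $\cotr{(C \comma v)}{c}$ behave correctly under the slice maps $\cotr{(C \comma v)}{f}$; this requires unwinding the definition of $\cotr{A}{f}$ in terms of $(\id{\Dn{0}}, f) \comma p_1$ and matching it against the hypothesis on $v$, presumably again via Proposition~\ref{prop:comma_triangle}. The second delicate point is checking that the left square above is genuinely a pullback of $\ooCat$ and that the induced top map is the Thomason equivalence coming from $j$, so that $N$ applied to it yields a homotopy pullback; this rests on the sesquifunctoriality of the comma construction recalled in the excerpt and on Proposition~\ref{prop:pullback_retr} to propagate the retract structure through the base change.
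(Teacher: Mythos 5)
Your route is essentially the paper's: factor $v$ as $p_1 \o j$ through $C \comma v$, treat the $p_1$\nbd-part with Proposition~\ref{prop:lemma} and the $j$\nbd-part via the retract propositions, then compose. Two corrections are needed, though. First, the ``main obstacle'' you anticipate is illusory: Proposition~\ref{prop:lemma} does \emph{not} require $p_1 : C \comma v \to C$ to be colocally homotopically constant. Its hypothesis bears on $v$ itself, and its conclusion is exactly that every pullback of $p_1 : C \comma v \to C$ is a homotopy pullback; so it applies verbatim to your right-hand square with no transfer argument. Second, the outer rectangle of your horizontal diagram is not the square of the statement: its right-hand edge is $p_1 : C \comma v \to C$ rather than $v : B \to C$, its top composite is $j \o p_2$ rather than $p_2$, and Proposition~\ref{prop:comma_fib_prod} does not identify the two (also, your left-hand square is not a strict pullback, though that is harmless since the Thomason-equivalence criterion applies to any commutative square). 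To finish you must either invoke the cancellation property of homotopy pullback squares --- the statement's square pasted with the square having horizontal arrows $j$ and $\id{C}$ yields your outer rectangle, and two of the three squares being homotopy pullbacks forces the third to be one --- or, as the paper does, cut the statement's square \emph{vertically} through $u \comma (\id{v}, j) : u \comma v \to u \comma p_1$ and $j : B \to C \comma v$: the top square then has Thomason equivalences as its vertical arrows, the bottom square is a genuine pullback of $p_1$ covered by Proposition~\ref{prop:lemma}, and their vertical composite is literally the square of the statement, so only the composition property is needed.
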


\begin{proof}
  Consider the factorization
  \[
    \xymatrix{
      B \ar[r]^-j & C \comma v \ar[r]^-{p_1} & C
    }
  \]
  of $v$ introduced in paragraph~\ref{paragr:fact}. The pullback square of
  the statement factors as a composite of two pullback squares
  \[
    \xymatrix{
      u \comma v \pullbackcorner \ar[d]_{j'} \ar[r] & B \ar[d]^j \\
      \cdot \pullbackcorner \ar[d] \ar[r] & C \comma v \ar[d]^{p_1} \\
      u \comma C \ar[r]_{p_2} & C
    }
  \]
  and it suffices to show that these two squares are
  homotopy pullback squares. By Proposition~\ref{prop:j_retr}, the
  \oo-functor $j$ is a strong right oplax transformation retract.
  On the other hand, by Proposition~\ref{prop:comma_fib_prod}, the
  \oo-functor $j'$ can be identified with the \oo-functor
   \[ u \comma (\id{v}, j) : u \comma v \to u \comma p_1, \]
  which, by Proposition~\ref{prop:comma_retr}, is a strong right oplax
  transformation retract as well. It follows from
  Proposition~\ref{prop:retr_Thomason} that both $j$ and $j'$ are Thomason
  equivalences, showing that the top square is a homotopy pullback square.
  As for the bottom square, this follows from the previous proposition.
\end{proof}

\begin{corollary}[Theorem B]
  If $u : A \to B$ is a colocally homotopically constant \oo-functor and
  $b$ is an object of $B$, then the pullback square
  \[
    \xymatrix{
      \cotr{A}{b} \pullbackcorner \ar[d]_{\cotr{u}{b}} \ar[r] & A \ar[d]^u \\
      \cotr{B}{b} \ar[r] & B \pbox{,}
    }
  \]
  where the horizontal arrows are the forgetful \oo-functors,
  is a homotopy pullback square.
\end{corollary}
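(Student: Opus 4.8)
The plan is to deduce the statement as the special case of Theorem~\ref{thm:comma_homot} in which the apex of the left-hand functor is reduced to a point. Concretely, recall from the definition of slice \oo-categories that $\cotr{A}{b} = b \comma u$ and $\cotr{B}{b} = b \comma B$, where $b$ is viewed as an \oo-functor $\Dn{0} \to B$. Thus the cospan $\Dn{0} \xrightarrow{b} B \xleftarrow{u} A$ is of exactly the shape to which Theorem~\ref{thm:comma_homot} applies: taking the theorem's $u$ to be $b$, its $v$ to be our $u$, and its $C$ to be $B$, the colocal homotopical constancy required of the right-hand functor is precisely our hypothesis on $u$.

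With these substitutions, Theorem~\ref{thm:comma_homot} asserts that the pullback square
\[
\xymatrix{
b \comma u \pullbackcorner \ar[d]_{b \comma (\id{u}, u)} \ar[r]^-{p_2} & A \ar[d]^{u} \\
b \comma B \ar[r]_-{p_2} & B
}
\]
is a homotopy pullback square. It remains only to recognize this as the square in the statement. Under the identifications $b \comma u = \cotr{A}{b}$ and $b \comma B = \cotr{B}{b}$, the two occurrences of $p_2$ become the forgetful \oo-functors $\cotr{A}{b} \to A$ and $\cotr{B}{b} \to B$, while the right-hand vertical map is $u$. Finally, the left-hand vertical map $b \comma (\id{u}, u)$ is, by the second pullback square of Proposition~\ref{prop:comma_fib_prod}, nothing but the projection exhibiting $\cotr{A}{b}$ as the fiber product $\cotr{B}{b} \times_B A$, which is by construction the induced \oo-functor $\cotr{u}{b}$.

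Hence the displayed square coincides with the one in the statement, and the homotopy-pullback conclusion is immediate. There is no genuine obstacle here: all of the homotopical content is already contained in Theorem~\ref{thm:comma_homot} (and ultimately in Proposition~\ref{prop:lemma}), and the only point requiring care is the bookkeeping needed to match the variables of that theorem to the degenerate cospan with apex $\Dn{0}$ and to identify the comparison \oo-functor $\cotr{u}{b}$ via Proposition~\ref{prop:comma_fib_prod}.
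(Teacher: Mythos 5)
Your proposal is correct and is exactly the paper's argument: the paper proves this corollary in one line as ``a particular case of the previous theorem,'' i.e.\ Theorem~\ref{thm:comma_homot} applied to the cospan $\Dn{0} \xrightarrow{b} B \xleftarrow{u} A$. Your additional bookkeeping identifying the left vertical arrow $b \comma (\id{u}, u)$ with $\cotr{u}{b}$ via Proposition~\ref{prop:comma_fib_prod} is accurate and just makes explicit what the paper leaves implicit.
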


\begin{proof}
  This is a particular case of the previous theorem.
\end{proof}

\begin{corollary}\label{coro:comma_homot}
  Let
  \[
    \xymatrix{
      A \ar[r]^u & C & B \ar[l]_v
    }
  \]
  be two \oo-functors. If $v$ is colocally homotopically constant,
  then the comma construction $u \comma v$ is canonically the homotopy
  pullback $A \times_C^h B$.
\end{corollary}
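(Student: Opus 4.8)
The substance of the result is already contained in Theorem~\ref{thm:comma_homot}; the plan is to deduce the corollary from it by a formal argument, replacing the corner $u \comma C$ of the homotopy pullback square by the weakly equivalent \oo-category~$A$. Throughout I would pass to Street's nerve, where by definition the homotopy pullback $A \timesh_C B$ is computed as the simplicial homotopy pullback of $NA \xto{Nu} NC \xot{Nv} NB$, and I would exhibit the canonical comparison map as a weak equivalence.

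First I would recall from paragraph~\ref{paragr:fact_dual} the factorization $u = p_2 \circ j'$, where $j' : A \to u \comma C$ is a strong left oplax transformation retract above its source with retraction $p_1 : u \comma C \to A$; by Proposition~\ref{prop:retr_Thomason}, $j'$ is a Thomason equivalence, so that $Nj' : NA \to N(u \comma C)$ is a simplicial weak equivalence. Applying the nerve to the homotopy pullback square of Theorem~\ref{thm:comma_homot} and using that $N$ preserves fiber products, $N(u \comma v)$ is identified with the simplicial homotopy pullback of $N(u \comma C) \xto{Np_2} NC \xot{Nv} NB$. Since $Np_2 \circ Nj' = Nu$, the triple $(Nj', \id{NC}, \id{NB})$ is a levelwise weak equivalence of cospans from $NA \xto{Nu} NC \xot{Nv} NB$ to $N(u \comma C) \xto{Np_2} NC \xot{Nv} NB$, and hence, by homotopy invariance of simplicial homotopy pullbacks, induces a weak equivalence identifying $NA \timesh_{NC} NB$ with $N(u \comma C) \timesh_{NC} NB$, itself equivalent to $N(u \comma v)$.

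It then remains to check that this equivalence is the one realized by the canonical projections $p_1 : u \comma v \to A$ and $p_2 : u \comma v \to B$, together with the simplicial homotopy obtained, via Proposition~\ref{prop:nerve_sesqui}, from the structural oplax transformation $u p_1 \tod v p_2$ of paragraph~\ref{paragr:def_comma}; this is what makes the identification with $A \timesh_C B$ \emph{canonical}. Here I would use the commutativity relations of paragraph~\ref{paragr:def_comma_morph} — in particular that the canonical projection $p_1 : u \comma v \to A$ factors as $u \comma v \xto{u \comma (\id{v}, v)} u \comma C \xto{p_1} A$ and that $p_1 \circ j' = \id{A}$ — to match the canonical comparison map with the equivalence produced above. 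Since all the genuine homotopical content is supplied by Theorem~\ref{thm:comma_homot}, the only real work is this bookkeeping verifying the compatibility of the canonical maps; I expect no substantial obstacle beyond it.
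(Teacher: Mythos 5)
Your proposal is correct and follows essentially the same route as the paper: factor $u$ as $p_2 \circ j'$ through $u \comma C$ via paragraph~\ref{paragr:fact_dual}, note that $j'$ is a Thomason equivalence by Proposition~\ref{prop:retr_Thomason}, and conclude by comparing the cospan $A \to C \ot B$ with $u \comma C \to C \ot B$ and invoking Theorem~\ref{thm:comma_homot}. The only difference is that you spell out the bookkeeping identifying the comparison map with the canonical projections, which the paper leaves implicit.
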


\begin{proof}
  By paragraph~\ref{paragr:fact_dual}, the \oo-functor $u$ factors as
  \[
    \xymatrix{
      A \ar[r]^-{j'} & u \comma C \ar[r]^-{p_2} & C \pbox{,}
    }
  \]
  where $j'$ is a strong left oplax transformation retract and hence a
  Thomason equivalence by Proposition~\ref{prop:retr_Thomason}. We thus get
  a commutative diagram
  \[
    \xymatrix{
      A \ar[r]^u \ar[d]_{j'} & C \ar@{=}[d] & B \ar[l]_v \ar@{=}[d] \\
      u \comma C \ar[r]_{p_2} & C & B \ar[l]^v \pbox{,}
    }
  \]
  where the vertical arrows are Thomason equivalences, and the result follows
  from the previous theorem.
\end{proof}

\begin{remark}
  More precisely, one can show that, under the same hypothesis as in the
  previous corollary, the ``$2$-square''
  \[
    \shorthandoff{;:}
    \xymatrix{
      u \comma v
      \ar[d]_{p_1}
      \ar[r]^{p_2}
      &
      B
      \ar[d]^v
      \\
      A
      \ar[r]_u
      &
      C
      \ar@{}[u];[l]_(.30){}="x"
      \ar@{}[u];[l]_(.70){}="y"
      \ar@{<=}"x";"y"_{\kappa}
    }
  \]
  introduced in paragraph~\ref{paragr:def_comma} is a ``homotopy pullback
  $2$-square'' in some appropriate sense (for instance, its topological
  realization is a homotopy pullback in the sense of Mather \cite{Mather}).
\end{remark}

\begin{corollary}\label{coro:thmB}
  If $u : A \to B$ is a colocally homotopically constant \oo-functor and
  $b$ is an object of $B$, then the \oo-category $\cotr{A}{b}$ is
  canonically the homotopy fiber of~$u$ at $b$.
\end{corollary}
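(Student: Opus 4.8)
The plan is to obtain the statement as the special case of Corollary~\ref{coro:comma_homot} in which the left-hand leg is the inclusion of the point $b$. First I would recall, from the definition of slice \oo-categories, that $\cotr{A}{b} = b \comma u$, where $b$ is regarded as the \oo-functor $\Dn{0} \to B$ with value $b$. This exhibits $\cotr{A}{b}$ as the comma construction associated with the diagram
\[
  \xymatrix{\Dn{0} \ar[r]^b & B & A \ar[l]_u \pbox{.}}
\]

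Next I would apply Corollary~\ref{coro:comma_homot} to this very diagram. In the notation of that corollary, this amounts to taking the domain ``$A$'' of its left leg to be $\Dn{0}$, its left \oo-functor ``$u$'' to be $b$, its base ``$C$'' to be $B$, the domain ``$B$'' of its right leg to be $A$, and its right \oo-functor ``$v$'' to be $u$. The hypothesis of the corollary requires the \emph{right} leg to be colocally homotopically constant; here that right leg is our \oo-functor $u$, so the hypothesis is precisely the one we are assuming. The corollary then yields that $b \comma u$ is canonically the homotopy pullback $\Dn{0} \times_B^h A$.

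It remains only to identify this homotopy pullback with the homotopy fiber. By definition, the homotopy fiber of $u$ at $b$ is the homotopy pullback of $u$ along the point $b : \Dn{0} \to B$, namely $\Dn{0} \times_B^h A$. Combining this with the identification $\cotr{A}{b} = b \comma u$ from the first step gives the result, together with the canonical comparison data furnished by Corollary~\ref{coro:comma_homot}.

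Since every step is either a specialization or an unwinding of a definition, I do not expect any genuine obstacle. The one point calling for care is purely bookkeeping: when specializing Corollary~\ref{coro:comma_homot}, one must match the two legs correctly, so that it is the colocally homotopically constant \oo-functor $u$ that plays the role of the right leg ``$v$'', and the point $b$ that plays the role of the left leg ``$u$'', notwithstanding the clash of names.
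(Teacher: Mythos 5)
Your proposal is correct and coincides with the paper's own proof, which simply observes that the statement is the particular case of Corollary~\ref{coro:comma_homot} obtained by taking the left leg to be $b : \Dn{0} \to B$ and the right leg to be $u$. Your careful matching of the two legs (so that $u$ plays the role of the colocally homotopically constant \oo-functor $v$) is exactly the specialization intended there.
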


\begin{proof}
  This is a particular case of the previous corollary.
\end{proof}

\begin{remark}
  As the comma construction of two $n$-functors is an $n$-category, the four
  previous statements all restrict to $n$-categories. In particular, we
  recover the original Quillen Theorem B and its generalization to
  $2$-categories proven by \hbox{Cegarra}~\cite{CegThmB}. To get direct
  proofs of these results for $n$-categories, all one has to do is to
  change in our proofs all the ``$\infty$'' to ``$n$'' and to
  replace the $m$-th oriental $\On{m}$ appearing in the proof of
  Proposition~\ref{prop:lemma} by its
  $n$-th truncation $\Ontr{m}{n}$, obtained from $\On{m}$
  by keeping only $i$-cells for $i \le n$ and modding out by $(n+1)$-cells.
  Of course, some parts of these proofs get simpler for small~$n$. Most
  notably, for $n = 1$, the map $f_i$ of the proof of
  Proposition~\ref{prop:lemma} can be identified with the nerve of the
  functor $i_\ast$, so that all one has to prove is that $0_\ast : x(0)
  \comma u \to x \comma u$ is a Thomason equivalence, which can be done by
  describing an explicit structure of transformation retract on this functor
  (note that an oplax transformation between $1$-functors is nothing but a
  natural transformation). More generally, for $n = 1$ and $n = 2$, all the
  intermediate constructions and oplax transformations involved in these
  proofs can be defined by using explicit formulas.
\end{remark}

\begin{remark}
  The four previous results were proven for ``under-\oo-categories''.
  They remain valid for ``over-\oo-categories'' defined as
 $\tr{A}{b} = u \comma b$, for $u : A \to B$ an \oo-functor and $b$ an
 object of $B$. This will follow from the equality $\tr{A}{b} =
 (\cotr{A^\o}{b})^\o$ (see paragraph~\ref{paragr:notation} for the notation
 $C^\o$) and the fact, that we will prove with \hbox{Maltsiniotis}
 in~\cite{AraMaltsiNerfs}, that the duality $C \mapsto C^\o$ sends Thomason
 equivalences to Thomason equivalences.

 If one tries to adapt our proofs to ``over-\oo-categories'', one has to
 replace the \oo-functor $0 : \On{0} \to \On{m}$ appearing in the proof of
 Proposition~\ref{prop:lemma} by the \oo-functor $m : \On{0} \to \On{m}$.
 This \oo-functor is both a right oplax transformation
 retract and a right lax transformation retract, but only the lax structure
 is compatible with the structure of right deformation retract of the
 simplicial map $m : \Deltan{0} \to \Deltan{m}$. Therefore, one has to
 replace the use of our ``oplax'' comma construction $u \comma v
 = A \times_C \HomLax(\Dn{1}, C) \times_C B$, for $u :
 A \to C$ and $v : B \to C$ two \oo-functors, by its ``lax'' variant~$u
 \comma' v = A \times_C \HomOpLax(\Dn{1}, C) \times_C B$, which has
 sesquifunctoriality properties with respect to lax transformations instead
 of oplax transformations. This leads to a proof of our results for
 ``over-\oo-categories'' defined as
 % TOCHECK
 \smash{$\trm{A}{b} = u \comma' b$} (see \cite[Remark~6.37]{AraMaltsiJoint}
 for an explanation of this notation). As \smash{$\trm{A}{b} =
 (\cotr{A^\op}{b})^\op$}, where $C \mapsto C^\op$ denotes the duality of
 $\ooCat$ consisting in reversing cells in odd dimension, the results for
 these ``over-\oo-categories'' also follow formally from our results and the
 fact that the duality $C \mapsto C^\op$ sends Thomason equivalences to
 Thomason equivalences, which is a consequence of the existence of a natural
 isomorphism between $N(C^\op)$ and $N(C)^\op$ (see \cite[Proposition
 5.2]{AraMaltsiThmAII}), where $X \mapsto X^\op$ denotes the usual duality
 of simplicial sets.

 Finally, the results for ``under-\oo-categories'' defined as $\cotrm{A}{b} =
 b \comma' u$ will also follow from the fact that $C \mapsto C^\o$ sends
 Thomason equivalences to Thomason equivalences, as $\cotrm{A}{b} =
 ({\trm{A^\o}{b}})^\o$.
\end{remark}

\section{A few applications}

A first consequence of the \oo-categorical Theorem B is the (non-relative)
\oo-categorical Theorem A, which is a special case of the main result of
\cite{AraMaltsiThmAI} and \cite{AraMaltsiThmAII}.

\begin{paragraph}
  We will say that an \oo-category $A$ is \ndef{aspherical} if the unique
  \oo-functor from~$A$ to the terminal \oo-category is a Thomason
  equivalence or, in other words, if its nerve $NA$ is weakly contractible.
\end{paragraph}

\begin{theorem}\label{thm:thmA}
  Let $u : A \to B$ be an \oo-functor. If for every object $b$ of $B$, the
  \oo-category $\cotr{A}{b}$ is aspherical, then $u$ is a Thomason
  equivalence.
\end{theorem}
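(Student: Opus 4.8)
The plan is to deduce this non-relative Theorem~A from the \oo-categorical Theorem~B established above, the crucial observation being that the asphericity hypothesis on the slices is precisely what forces $u$ to be colocally homotopically constant. Indeed, for every $1$\nbd-cell $f : b \to b'$ of $B$, the \oo-functor $\cotr{A}{f} : \cotr{A}{b'} \to \cotr{A}{b}$ is a morphism between two \oo-categories whose nerves are weakly contractible by hypothesis; since any morphism between weakly contractible simplicial sets is a weak equivalence, $\cotr{A}{f}$ is a Thomason equivalence and $u$ is colocally homotopically constant. All the results of this section are therefore available for $u$.

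Next I would reduce the statement to a projection. By paragraph~\ref{paragr:fact}, the \oo-functor $u$ factors as
\[
  \xymatrix{ A \ar[r]^-j & B \comma u \ar[r]^-{p_1} & B \pbox{,} }
\]
and by Proposition~\ref{prop:j_retr} the \oo-functor $j$ is a strong right oplax transformation retract, hence a Thomason equivalence by Proposition~\ref{prop:retr_Thomason}. Consequently $u$ is a Thomason equivalence if and only if $p_1 : B \comma u \to B$ is one, and it suffices to prove the latter. The advantage of working with $p_1$ is twofold. On the one hand, since $u$ is colocally homotopically constant, Proposition~\ref{prop:lemma} applies and shows that $N(p_1)$ satisfies the equivalent conditions of Rezk's Proposition~\ref{prop:simpl_thmB}, in particular condition~\ref{cond:fib}. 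On the other hand, by Proposition~\ref{prop:comma_fib_prod} applied to $\Dn{0} \xto{b} B \xot{u} A$, and since $N$ preserves pullbacks, the fiber of $N(p_1)$ over any object $b$ of $B$ is canonically $N(b \comma u) = N(\cotr{A}{b})$, which is weakly contractible by the asphericity hypothesis.

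It then remains to conclude that a morphism of simplicial sets satisfying condition~\ref{cond:fib} of Proposition~\ref{prop:simpl_thmB} with weakly contractible fibers over every vertex is a weak equivalence. I would argue this by base change over the simplices of $NB$. Given a simplex $x : \Deltan{n} \to NB$, write $E_x$ for the pullback of $N(p_1)$ along $x$ and $E_0$ for its pullback along the vertex $x(0)$. Applying condition~\ref{cond:fib} to the composite $\Deltan{0} \xto{0} \Deltan{n} \xto{x} NB$, the weak equivalence $0 : \Deltan{0} \to \Deltan{n}$ forces $E_0 \to E_x$ to be a weak equivalence; as $E_0 = N(\cotr{A}{x(0)})$ is weakly contractible, so is $E_x$, and therefore $E_x \to \Deltan{n}$ is a weak equivalence between weakly contractible simplicial sets. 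Since a morphism of simplicial sets that becomes a weak equivalence after base change to every simplex of its target is itself a weak equivalence, $N(p_1)$ is a weak equivalence, and hence so is $Nu$.

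The heart of the argument, and its only genuinely new ingredient, is the opening remark that asphericity of all the slices $\cotr{A}{b}$ upgrades the Theorem~A hypothesis into the Theorem~B hypothesis; once $u$ is known to be colocally homotopically constant, the statement reduces to the elementary principle that a map which is well-behaved under base change (here via Proposition~\ref{prop:lemma} and Rezk's Proposition~\ref{prop:simpl_thmB}) and has contractible fibers is a weak equivalence. The point requiring the most care is the final base-change step: one must check that contractibility of the fibers over \emph{all} vertices, together with condition~\ref{cond:fib}, also yields the needed control on connected components, so that the simplex-wise weak equivalences genuinely assemble into a weak equivalence $N(p_1)$.
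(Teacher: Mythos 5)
Your proof is correct and follows essentially the same route as the paper's: the asphericity hypothesis forces $u$ to be colocally homotopically constant, after which the Theorem~B machinery identifies the contractible slices $\cotr{A}{b}$ with the (homotopy) fibers of $u$ and the conclusion follows. The paper simply invokes Corollary~\ref{coro:thmB} at this point rather than re-running the factorization through $B \comma u$ and Proposition~\ref{prop:lemma} by hand, and it concludes from the standard fact that a map with weakly contractible homotopy fibers over every vertex is a weak equivalence, which lets it bypass the (true, but unreferenced) simplex-locality principle you appeal to in your final step.
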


\begin{proof}
  The hypothesis implies that $u$ is colocally homotopically constant. We
  can thus apply Theorem B and more precisely Corollary~\ref{coro:thmB}.
  We get that, for every object~$b$ of~$B$, the \oo-category
  $\cotr{A}{b}$ is the homotopy fiber of $u$ at $b$. As by hypothesis
  $\cotr{A}{b}$ is aspherical, this implies that all the homotopy fibers of
  $Nu$ are weakly contractible, showing that $Nu$ is a weak equivalence.
\end{proof}

\goodbreak

We will now use the \oo-categorical Theorem B to produce models of
Eilenberg--Mac Lane spaces. We will need the following lemma:

\begin{lemma}
  Let $A$ be an \oo-category and let $a$ and $a'$ be two objects of $A$.
  There exists a canonical isomorphism
  \[ a \comma a' \simeq \Homi_A(a, a')^\o, \]
  natural in $a$ and $a'$, where $a$ and $a'$ are seen as \oo-functors
  $\Dn{0} \to A$ and $C \mapsto C^\o$ denotes the duality introduced in
  paragraph~\ref{paragr:notation}.
\end{lemma}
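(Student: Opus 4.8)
The plan is to prove the isomorphism by Yoneda, producing a bijection between the representable presheaves $\Hom_{\ooCat}(T, a\comma a')$ and $\Hom_{\ooCat}(T, \Homi_A(a,a')^\o)$ that is natural in the \oo-category $T$ (and then checking naturality in $a$ and $a'$). First I would unfold the left-hand side. By the definition of the comma construction (paragraph~\ref{paragr:def_comma}), $a\comma a' = \Dn0 \times_A \HomLax(\Dn1, A)\times_A\Dn0$, so an \oo-functor $T\to a\comma a'$ is an \oo-functor $\phi: T\to\HomLax(\Dn1,A)$ for which $\pi_0\phi$ and $\pi_1\phi$ are the constant \oo-functors at $a$ and at $a'$. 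Through the adjunction $\Hom_{\ooCat}(\Dn1\otimes T, A)\simeq \Hom_{\ooCat}(T,\HomLax(\Dn1,A))$, and the fact that $\pi_0$ and $\pi_1$ correspond under it to precomposition with $\sigma\otimes T$ and $\tau\otimes T$, this identifies $\Hom_{\ooCat}(T, a\comma a')$ with the set of \oo-functors $G:\Dn1\otimes T\to A$ whose restrictions along $\{0\}\otimes T\simeq T$ and $\{1\}\otimes T\simeq T$ are constant at $a$ and at $a'$. Equivalently, writing $\Theta(T)$ for the pushout $\Dn0\sqcup_T(\Dn1\otimes T)\sqcup_T\Dn0$ obtained by collapsing each of the two ends of $\Dn1\otimes T$ to a point, this is the set of \oo-functors $\Theta(T)\to A$ sending the two resulting cone points to $a$ and to $a'$.

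For the right-hand side I would use that $C\mapsto C^\o$ is an involutive automorphism of $\ooCat$, so that $\Hom_{\ooCat}(T,\Homi_A(a,a')^\o)\simeq\Hom_{\ooCat}(T^\o,\Homi_A(a,a'))$. Combining this with the universal property of the hom \oo-category (the standard suspension--hom adjunction, stating that $\Hom_{\ooCat}(S,\Homi_A(a,a'))$ is naturally the set of \oo-functors from the suspension $\Sigma S$ to $A$ carrying the two objects of $\Sigma S$ to $a$ and to $a'$), the right-hand side becomes the set of \oo-functors $\Sigma(T^\o)\to A$ sending the two objects of $\Sigma(T^\o)$ to $a$ and to $a'$.

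The two descriptions then coincide as soon as we have a natural isomorphism of bipointed \oo-categories $\Sigma(T^\o)\simeq \Theta(T)$, and this is the crux of the proof. The appearance of the $\o$-duality is forced by the orientation of the \emph{oplax} Gray tensor product: already for $T=\Dn1$, the nontrivial $2$-cell of $\Dn1\otimes\Dn1$, after the two ends are collapsed, becomes a $2$-cell $d\Rightarrow c$ between the two induced arrows, so that the hom \oo-category of $\Theta(\Dn1)$ between its cone points is $\Dn1^\o$ rather than $\Dn1$ — in agreement with the Remark following the introduction of the Gray tensor product. To establish the isomorphism in general I would note that both $T\mapsto\Sigma(T^\o)$ and $T\mapsto\Theta(T)$ preserve colimits as functors from $\ooCat$ to bipointed \oo-categories (the first because $\Sigma$ is a left adjoint and $(\var)^\o$ is an automorphism, the second because $\Dn1\otimes\var$ is a left adjoint and the collapse is itself a colimit), so that it suffices to produce the comparison \oo-functor on a dense generator such as the globular cells and to verify there that it is an isomorphism.

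I expect the genuine obstacle to be precisely this last verification, namely the bookkeeping of cell orientations that shows the collapse of the oplax cylinder reproduces $\Sigma(T^\o)$ rather than $\Sigma(T)$; I would make this transparent using Steiner's theory of augmented directed complexes, in which both sides admit explicit presentations and the collapse together with the reversal of higher cells can be read off directly. Once the isomorphism $\Sigma(T^\o)\simeq\Theta(T)$ is in hand, naturality in $a$ and $a'$ is immediate, since the comma construction, the suspension--hom adjunction and the duality are all functorial in the endpoints, and the Yoneda lemma yields the claimed natural isomorphism $a\comma a'\simeq\Homi_A(a,a')^\o$.
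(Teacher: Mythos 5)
The paper offers no internal argument for this lemma: it simply cites \cite[Proposition B.6.2]{AraMaltsiJoint}, so there is nothing to compare your proof against line by line. On its own terms, your architecture is sound and you have correctly located the crux: unwinding $\Hom_{\ooCat}(T, a\comma a')$ via the adjunction between $\Dn{1}\otimes{-}$ and $\HomLax(\Dn{1},{-})$ as maps out of the collapsed oplax cylinder $\Theta(T)$ is exactly right, the suspension--hom description of the right-hand side is correct, and your orientation check on $T=\Dn{1}$ (the $2$-cell of $\Dn{1}\otimes\Dn{1}$ collapsing to $d\Rightarrow c$, forcing the $(-)^\o$) is accurate and is precisely why the duality appears.

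There is, however, one genuine gap in the reduction step: the globes do \emph{not} form a dense subcategory of $\ooCat$. Density of the globes would mean that the underlying globular-set functor $\ooCat\to\pref{\mathbb{G}}$ is fully faithful, which is false (a map of underlying globular sets between $\oo$-categories need not respect composition). So ``produce the comparison on the globular cells and extend by density'' does not go through as stated. Moreover, even granting a colimit-generation statement (every $\oo$-category is an iterated colimit of globes), two colimit-preserving functors that agree on globes need not be isomorphic; what you actually need is a comparison morphism $\Theta(T)\to\Sigma(T^\o)$ (or its inverse) defined naturally for \emph{all} $T$, after which ``the class of $T$ on which it is invertible contains the globes and is closed under colimits'' does finish the job. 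Constructing that global natural map is not free --- by the universal property of the pushout defining $\Theta(T)$ it amounts to exhibiting a canonical oplax transformation between the two constant functors $T\to\Sigma(T^\o)$, which is essentially the computation you are trying to avoid. The two standard ways to close this gap are either to replace the globes by a genuinely dense subcategory on which Steiner's theory applies (e.g.\ Berger's cell category, whose objects are strongly loop-free Steiner complexes, so that both $\Theta$ and $\Sigma(({-})^\o)$ can be computed at the level of augmented directed complexes), or to argue directly from the explicit description of the cells of $\HomLax(\Dn{1},A)$ lying over $(a,a')$, identifying them with the cells of $\Homi_A(a,a')$ up to the reversal $(-)^\o$; the latter is in effect what the cited Proposition~B.6.2 of \cite{AraMaltsiJoint} does. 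With either repair your proof is complete; the naturality in $a$ and $a'$ at the end is indeed immediate.
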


\begin{proof}
  See \cite[Proposition B.6.2]{AraMaltsiJoint}.
\end{proof}

\begin{theorem}\label{thm:loop}
  Let $A$ be an \oo-category endowed with an object $a$. Suppose that for
  every $1$\nbd-cell $f : a' \to a''$ of $A$ the induced \oo-functor $\Homi_A(a'',
  a)^\o \to \Homi_A(a', a)^\o$ is a Thomason equivalence. Then
  $\Homi_A(a,a)^\o$ is a model for the loop space of $(A, a)$ in the sense
  that $N(\Homi_A(a,a)^\o)$ has the homotopy type of the loop space of $(NA,
  a)$.
\end{theorem}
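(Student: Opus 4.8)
The plan is to recognize the hypothesis as exactly the assertion that the \oo-functor $a : \Dn{0} \to A$ is colocally homotopically constant, and then to obtain the conclusion from Corollary~\ref{coro:comma_homot} applied to this \oo-functor on both sides of a span.

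First I would invoke the preceding lemma, which gives, for every object $b$ of $A$ seen as an \oo-functor $\Dn{0} \to A$, a natural isomorphism $b \comma a \simeq \Homi_A(b, a)^\o$. Unwinding the definition of colocal homotopical constancy for $u = a : \Dn{0} \to A$, the relevant slices are $\cotr{\Dn{0}}{b} = b \comma a \simeq \Homi_A(b, a)^\o$, and for a $1$-cell $f : a' \to a''$ of $A$ the comparison \oo-functor $\cotr{\Dn{0}}{f} : \cotr{\Dn{0}}{a''} \to \cotr{\Dn{0}}{a'}$ corresponds, under these isomorphisms, to the induced \oo-functor $\Homi_A(a'', a)^\o \to \Homi_A(a', a)^\o$. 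Hence the hypothesis says precisely that $a : \Dn{0} \to A$ is colocally homotopically constant.

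Next I would apply Corollary~\ref{coro:comma_homot} to the diagram $\Dn{0} \xto{a} A \xot{a} \Dn{0}$, whose right-hand \oo-functor is now known to be colocally homotopically constant. This gives that the comma construction $a \comma a$ is canonically the homotopy pullback $\Dn{0} \timesh_A \Dn{0}$. Using the lemma once more, $\Homi_A(a, a)^\o \simeq a \comma a$, so it only remains to identify $\Dn{0} \timesh_A \Dn{0}$ with the loop space. Since $\Dn{0}$ is the terminal \oo-category, its nerve is the point $\Deltan{0}$, and by the definition of homotopy pullback squares in $\ooCat$ the simplicial set $N(a \comma a)$ is the homotopy pullback of $\Deltan{0} \xto{a} NA \xot{a} \Deltan{0}$ in $\pref{\cDelta}$, which is by definition the loop space $\Omega(NA, a)$. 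Thus $N(\Homi_A(a, a)^\o)$ has the homotopy type of the loop space of $(NA, a)$.

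I expect the only step needing genuine care to be the identification, in the second paragraph, of $\cotr{\Dn{0}}{f}$ with the hom-induced \oo-functor $\Homi_A(a'', a)^\o \to \Homi_A(a', a)^\o$: this amounts to unwinding the definition of $\cotr{\Dn{0}}{f}$ in terms of $f$ regarded as an oplax transformation, together with the naturality statement of the lemma, and carries no real difficulty. Everything else is a direct application of the corollary and of the standard description of a loop space as the homotopy pullback of a based point against itself.
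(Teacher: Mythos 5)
Your proposal is correct and follows exactly the paper's own argument: the lemma identifies $b \comma a$ with $\Homi_A(b,a)^\o$ so that the hypothesis says $a : \Dn{0} \to A$ is colocally homotopically constant, and Corollary~\ref{coro:comma_homot} applied to the span $\Dn{0} \xto{a} A \xot{a} \Dn{0}$ then exhibits $N(a \comma a) \simeq N(\Homi_A(a,a)^\o)$ as the homotopy pullback of $\Deltan{0} \to NA \ot \Deltan{0}$, i.e.\ the loop space. The only difference is that you spell out the identification of $\cotr{\Dn{0}}{f}$ with the hom-induced \oo-functor, which the paper leaves implicit in the word ``precisely''.
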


\begin{proof}
  By the previous lemma, the hypothesis precisely means that the \oo-functor
  $a : \Dn{0} \to A$ is colocally homotopically constant. By
  Proposition~\ref{coro:comma_homot}, we thus get that $a \comma a$ is the
  homotopy pullback of
  \[
    \xymatrix{
      \Dn{0} \ar[r]^a & A & \Dn{0} \ar[l]_a \pbox{,}
    }
  \]
  that is, that $N(a \comma a)$ is the homotopy pullback of
  \[
    \xymatrix{
      \Deltan{0} \ar[r]^a & NA & \Deltan{0} \ar[l]_a \pbox{,}
    }
  \]
  thereby proving the result.
\end{proof}

\begin{remark}
  As mentioned before, we will prove with Maltsiniotis in
  \cite{AraMaltsiNerfs} that the duality $C \mapsto C^\o$ sends Thomason
  equivalences to Thomason equivalences. Therefore the previous theorem
  remains valid if all the dualities appearing in its statement are removed.
\end{remark}

\begin{paragraph}
  Let $(M, +, e)$ be a commutative monoid. For any $n \ge 1$, we define an
  $n$\nbd-category $\Kcat{M}{n}$ in the following way. Its cells are
  \[
  (\Kcat{M}{n})_k =
  \begin{cases}
    \{\ast\} & \text{if $0 \le k < n$,} \\
    M & \text{if $k = n$;}
  \end{cases}
  \]
  the unit of the unique $(n-1)$-cell is the unit $e$ of the monoid; and if
  $x$ and $y$ are $n$-cells, then for any $0 \le j < n$, we set $x \comp_j
  y = x + y$.
\end{paragraph}

\begin{theorem}\label{thm:Kpn}
  For any abelian group $\pi$ and any $n \ge 1$, the \oo-category
  $\Kcat{\pi}{n}$ is a~$\Ktop{\pi}{n}$ in the sense that $N(\Kcat{\pi}{n})$
  is a~$\Ktop{\pi}{n}$.
\end{theorem}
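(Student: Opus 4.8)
The plan is to argue by induction on $n \ge 1$, using Theorem~\ref{thm:loop} to exhibit $N(\Kcat{\pi}{n})$ as a delooping of $N(\Kcat{\pi}{n-1})$. For the base case $n = 1$, the $1$\nbd-category $\Kcat{\pi}{1}$ is the one-object groupoid whose group of endomorphisms is $\pi$, and it is classical that its nerve is a $\Ktop{\pi}{1}$. So from now on I would assume $n \ge 2$ and that $N(\Kcat{\pi}{n-1})$ is a $\Ktop{\pi}{n-1}$.

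The first key step is to identify the relevant hom-\oo-category. Since $\Kcat{\pi}{n}$ has a single object $\ast$, the \oo-category $\Homi_{\Kcat{\pi}{n}}(\ast, \ast)$ has exactly one $k$\nbd-cell for $0 \le k < n-1$, its set of $(n-1)$\nbd-cells is $\pi$, and all the compositions $\comp_j$ for $0 \le j < n-1$ are given by the addition of $\pi$; in other words $\Homi_{\Kcat{\pi}{n}}(\ast, \ast) \simeq \Kcat{\pi}{n-1}$. Moreover, since $\pi$ is commutative, all these compositions are symmetric and hence unaffected by the reversal of positive-dimensional cells, so that $\Homi_{\Kcat{\pi}{n}}(\ast, \ast)^\o \simeq \Kcat{\pi}{n-1}$ as well. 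In particular $N(\Homi_{\Kcat{\pi}{n}}(\ast, \ast)^\o) \simeq N(\Kcat{\pi}{n-1})$ is a $\Ktop{\pi}{n-1}$ by the induction hypothesis.

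Next I would apply Theorem~\ref{thm:loop} to $A = \Kcat{\pi}{n}$ with the object $a = \ast$. Here the hypothesis is essentially free: as $n \ge 2$, the \oo-category $\Kcat{\pi}{n}$ has a single $1$\nbd-cell, namely $\id{\ast}$, so the only \oo-functor $\Homi_A(\ast, \ast)^\o \to \Homi_A(\ast, \ast)^\o$ to check is the one induced by $\id{\ast}$, which is the identity and thus trivially a Thomason equivalence. Theorem~\ref{thm:loop} then gives that $N(\Homi_A(\ast, \ast)^\o)$, which we just identified with a $\Ktop{\pi}{n-1}$, has the homotopy type of the loop space of $(N(\Kcat{\pi}{n}), \ast)$. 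To conclude, I would use the isomorphisms $\pi_k(N(\Kcat{\pi}{n})) \simeq \pi_{k-1}(\Omega N(\Kcat{\pi}{n}))$ for $k \ge 1$: the space $N(\Kcat{\pi}{n})$ is connected because $\Kcat{\pi}{n}$ has a single object, and since its loop space is a $\Ktop{\pi}{n-1}$ with $n-1 \ge 1$, all its homotopy groups vanish except $\pi_n \simeq \pi$. A connected space with these homotopy groups is a $\Ktop{\pi}{n}$, completing the induction.

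The step demanding the most care is the identification $\Homi_{\Kcat{\pi}{n}}(\ast, \ast)^\o \simeq \Kcat{\pi}{n-1}$, and in particular the observation that the commutativity of $\pi$ renders the duality $(\var)^\o$ harmless at the top dimension; once this is in place, the verification of the hypothesis of Theorem~\ref{thm:loop} is immediate and the remainder is a routine bookkeeping of homotopy groups for a delooping.
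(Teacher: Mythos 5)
Your proposal is correct and follows essentially the same route as the paper: the $n=1$ case is classical, and for $n \ge 2$ one notes that all $1$\nbd-cells of $\Kcat{\pi}{n}$ are identities so that Theorem~\ref{thm:loop} applies, identifies $\Homi_{\Kcat{\pi}{n}}(\ast,\ast)^\o$ with $\Kcat{\pi}{n-1}$, and concludes by induction using connectedness. Your extra remarks — that commutativity of $\pi$ makes the duality $(\var)^\o$ harmless and the explicit homotopy-group bookkeeping — only spell out details the paper leaves implicit.
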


\begin{proof}
  The result is well known for $n = 1$. If $n \ge 2$, then all the $1$-cells
  of $\Kcat{\pi}{n}$ are identities so that the hypothesis of
  Theorem~\ref{thm:loop} is satisfied. We thus get that the loop space of
  $\Kcat{\pi}{n}$ is $\Homi_{\Kcat{\pi}{n}}(\ast, \ast)^\o$, which is
  isomorphic to $\Kcat{\pi}{n-1}$. The result thus follows by induction
  using the fact that (the nerve of) $\Kcat{\pi}{n}$ is connected.
\end{proof}

\begin{remark}
  In \cite{BergerWreath}, Berger proves that the topological realization of
  the so-called cellular nerve of $\Kcat{\pi}{n}$ is a $\Ktop{\pi}{n}$,
  showing that $\Kcat{\pi}{n}$ is a $\Ktop{\pi}{n}$ in a, \forlang{a priori},
  different sense from the previous theorem (see his Corollary 4.3 and his
  Section~4.10).
  % More precisely, Berger shows (see his Corollary 4.3) that the topological
  % realization of a certain $n$-cellular set obtained from the $\Gamma$-set
  % representing the Eilenberg--Mac Lane spectrum associated to $\pi$ is
  % a~$\Ktop{\pi}{n}$, and the concrete description of this cellular set (see
  % the beginning of his Section~4.10) implies that this $n$-cellular set is
  % the cellular nerve of $\Kcat{\pi}{n}$.
  It will follow from the comparison of Street's nerve and the cellular
  nerve, that we will study with Maltsiniotis in \cite{AraMaltsiNerfs}, that
  these two meanings of ``being a $\Ktop{\pi}{n}$'' coincide.
\end{remark}

\begin{theorem}
  Let $\pi$ be a commutative ordered group whose underlying poset is
  aspherical (as a category). Denote by $\pi^+$ its monoid of positive
  elements. Then, for any $n \ge 1$, the \oo-category $\Kcat{\pi^+}{n}$ is a
  $\Ktop{\pi}{n}$.
\end{theorem}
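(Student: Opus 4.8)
The plan is to run the same downward induction on $n$ as in the proof of Theorem~\ref{thm:Kpn}, the only new ingredient being the base case $n=1$. For $n \ge 2$ every $1$\nbd-cell of $\Kcat{\pi^+}{n}$ is an identity, so the hypothesis of Theorem~\ref{thm:loop} holds trivially and the loop space of $\Kcat{\pi^+}{n}$ is $\Homi_{\Kcat{\pi^+}{n}}(\ast,\ast)^\o$. This \oo-category is isomorphic to $\Kcat{\pi^+}{n-1}$, the duality being harmless since $\pi^+$ is commutative, exactly as for $\Kcat{\pi}{n}$ in the proof of Theorem~\ref{thm:Kpn}. By the inductive hypothesis it is a $\Ktop{\pi}{n-1}$, and since $\Kcat{\pi^+}{n}$ has a single $0$\nbd-cell its nerve is connected; a connected space whose loop space is a $\Ktop{\pi}{n-1}$ is a $\Ktop{\pi}{n}$, which closes the induction.

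It remains to show that $\Kcat{\pi^+}{1}$, the one-object category associated with the monoid $\pi^+$, is a $\Ktop{\pi}{1}$. Here I would use the surrounding group. The inclusion of commutative monoids $\pi^+ \hookto \pi$ induces an \oo-functor $u : \Kcat{\pi^+}{1} \to \Kcat{\pi}{1}$, and by Theorem~\ref{thm:Kpn} the target is a $\Ktop{\pi}{1}$; it therefore suffices to prove that $u$ is a Thomason equivalence. For this I would invoke the \oo-categorical Theorem~A (Theorem~\ref{thm:thmA}). As $\Kcat{\pi}{1}$ has a single object $\ast$, the sole slice to inspect is $\cotr{\Kcat{\pi^+}{1}}{\ast} = \ast \comma u$.

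The heart of the argument is the identification of this slice. Using the pullback description $\cotr{A}{b} = \cotr{B}{b} \times_B A$ that follows from Proposition~\ref{prop:comma_fib_prod}, together with the fact that for $1$\nbd-functors the comma \oo-category reduces to the ordinary comma category, I would compute that $\cotr{\Kcat{\pi^+}{1}}{\ast}$ has the elements of $\pi$ as objects and a unique morphism from $g$ to $g'$ exactly when $g' - g \in \pi^+$, that is, when $g \le g'$. Thus this slice is canonically the underlying poset of the ordered group $\pi$, which is aspherical by hypothesis. Every slice of $u$ being aspherical, Theorem~\ref{thm:thmA} yields that $u$ is a Thomason equivalence, completing the base case. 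The main obstacle is precisely this identification: matching the paper's oplax comma construction, evaluated on the $1$\nbd-functor $u$ and the object $\ast$, with the honest coslice and recognizing the outcome as the poset $\pi$ with its given order (note that for a \emph{directed} order this poset is filtered, hence aspherical, recovering the hypothesis stated in the introduction). Once this is in place, everything else is the formal interplay of Theorems~A, \ref{thm:loop} and~\ref{thm:Kpn}.
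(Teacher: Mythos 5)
Your argument is correct, but it is organized differently from the paper's. The paper fixes $n$ and applies Theorem A (Theorem~\ref{thm:thmA}) to the \oo-functor $\Kcat{\pi^+}{n} \to \Kcat{\pi}{n}$ for \emph{every} $n$, reducing everything to the asphericity of the slice $\cotr{(\Kcat{\pi^+}{n})}{\ast}$; it then proves that asphericity by a separate induction, which requires a concrete description of this slice in all dimensions (an $n$-category built on $\Kcat{\pi}{n-1}$ with top cells given by the order), an application of Theorem~\ref{thm:loop} \emph{to the slice}, and the introduction of the opposite-order group $\pi^\o$ to make the inductive hypothesis match. You instead induct on the statement ``$\Kcat{\pi^+}{n}$ is a $\Ktop{\pi}{n}$'' itself: for $n \ge 2$ you apply Theorem~\ref{thm:loop} directly to $\Kcat{\pi^+}{n}$, where the hypothesis is trivially satisfied because the only $1$\nbd-cell is an identity, and identify the loop space with $\Kcat{\pi^+}{n-1}$ exactly as in the proof of Theorem~\ref{thm:Kpn} (commutativity of $\pi^+$ disposing of the duality); Theorem A is then invoked only once, in the base case $n=1$, where the comma construction reduces to an ordinary coslice and is visibly the poset $\pi$. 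Each step checks out: connectivity of $N(\Kcat{\pi^+}{n})$ plus a loop space which is a $\Ktop{\pi}{n-1}$ with $n-1 \ge 1$ does pin down all homotopy groups, and your slice computation agrees with the paper's $n=1$ case (up to replacing the poset by its opposite, which is harmless for asphericity). Your route buys a genuine simplification --- the slice description is needed only in the $1$\nbd-categorical case and the $\pi^\o$ trick disappears --- while the paper's route yields, as a by-product, the explicit description of $\cotr{(\Kcat{\pi^+}{n})}{\ast}$ in every dimension.
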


\begin{proof}
  The inclusion $\pi^+ \subset \pi$ induces an \oo-functor $\Kcat{\pi^+}{n}
  \to \Kcat{\pi}{n}$. By the previous theorem, it suffices to prove that
  this \oo-functor is a Thomason equivalence. We will apply Theorem A
  (Theorem~\ref{thm:thmA}). We have to prove that the
  \oo-category~$\cotr{(\Kcat{\pi^+}{n})}{\ast}$ is aspherical. The concrete
  description of the slice \oo-categories given
  in~\cite[paragraph~4.1]{AraMaltsiThmAI}
  shows that this \oo-category can be described in the following way: it is
  an $n$-category whose underlying $(n-1)$-category is $\Kcat{\pi}{n-1}$
  (where $\Kcat{\pi}{0}$ means $\pi$, as a set) and whose $n$-cells are given
  by the order on $\pi$. In particular, for $n = 1$, we get the poset~$\pi$
  seen as a $1$-category. This poset being aspherical by hypothesis, this
  ends the proof of the case $n = 1$. If $n > 1$, then we have isomorphisms
  \[
  \Homi_{\cotr{(\Kcat{\pi^+}{n})}{\ast}}(\ast, \ast)^\o
  \simeq
  \big(\cotr{(\Kcat{\pi^+}{n-1})}{\ast}\big)^\o
  \simeq
  \cotr{(\Kcat{((\pi^\o)^+)}{n-1})}{\ast},
  \]
  where $\pi^\o$ denotes the group $\pi$ equipped with the opposite order.
  As $\pi^\o$ is aspherical as a poset (since $\pi$ is), we can assume by
  induction that the \oo-category
  $\Homi_{\cotr{(\Kcat{\pi^+}{n})}{\ast}}(\ast, \ast)^\o$ is aspherical. We
  can thus apply Theorem~\ref{thm:loop} and we get that the loop space
  of~$\cotr{(\Kcat{\pi^+}{n})}{\ast}$ is aspherical.  This shows that
  $\cotr{(\Kcat{\pi^+}{n})}{\ast}$ is aspherical, as it is obviously
  connected, thereby ending the proof.
\end{proof}

\begin{example}
  The previous theorem applies to commutative ordered groups whose underlying
  poset is directed. In particular, the \oo-category $\Kcat{\N}{n}$ is a
  $\Ktop{\Z}{n}$.
\end{example}

\goodbreak

We end the section with an application to loop spaces of \oo-groupoids.

\begin{paragraph}
  Recall that a (strict) \ndef{\oo-groupoid} is an \oo-category in which
  every $i$-cell for $i > 0$ is strictly invertible (for the composition in
  codimension 1), and that an \oo-functor $f : G \to H$ between
  \oo-groupoids is an \ndef{equivalence of \oo-groupoids} if
  \begin{itemize}
    \item for every object $y$ of $H$, there exists an object $x$ of $G$ and
    a $1$-cell $u(x) \to y$ in~$H$,
    \item for every $i \ge 0$, every pair of parallel $i$-cells $u, v$ in
    $G$ (two $0$-cells being always considered as parallel)
    and every $(i+1)$-cell $\beta : f(u) \to f(v)$ in $H$, there exists
    an $(i+1)$-cell $\alpha : u \to v$ in $G$ and an $(i+2)$-cell $f(\alpha)
    \to \beta$ in $H$.
  \end{itemize}
\end{paragraph}

\begin{proposition}
  An equivalence of \oo-groupoids is a Thomason equivalence.
\end{proposition}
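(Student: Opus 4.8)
The plan is to prove directly that $Nf : NG \to NH$ is a simplicial weak equivalence by checking that it is a bijection on $\pi_0$ and an isomorphism on all higher homotopy groups at every basepoint. The higher-dimensional part I would organise as an induction powered by the loop space theorem (Theorem~\ref{thm:loop}), which lets me descend from the homotopy groups of $NG$ to those of the nerves of the local \oo-categories $\Homi_G(x,x)$.

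First I would treat $\pi_0$. Since $\pi_0(NG)$ is the set of objects of $G$ modulo the equivalence relation generated by $1$\nbd-cells, the essential surjectivity condition in the definition says exactly that $\pi_0(Nf)$ is surjective, while the lifting condition for $i = 0$ (a pair of parallel $0$\nbd-cells $x, x'$ together with a $1$\nbd-cell $f(x) \to f(x')$ lifting to a $1$\nbd-cell $x \to x'$) says exactly that it is injective. Hence $\pi_0(Nf)$ is a bijection, and moreover every path component of $NH$ contains a vertex of the form $f(x)$.

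For the higher homotopy groups I would argue by induction on $n \ge 1$ over the statement: for every equivalence of \oo-groupoids $f$ and every object $x$ of its source, $\pi_n(Nf)$ is an isomorphism at $x$. The key points are that, for an \oo-groupoid $G$ and an object $x$, the local \oo-category $\Homi_G(x,x)$ is again an \oo-groupoid, and that the hypothesis of Theorem~\ref{thm:loop} holds automatically: for every $1$\nbd-cell $f : a' \to a''$ of an \oo-groupoid, composition with $f$ and with its inverse exhibits $\Homi_A(a'', a) \to \Homi_A(a', a)$ as an isomorphism, hence a Thomason equivalence, and the same holds after applying the duality $(\var)^\o$. Theorem~\ref{thm:loop} then gives, naturally in the pointed \oo-groupoid, that $N(\Homi_G(x,x)^\o)$ is the loop space of $(NG, x)$, so that $\pi_{n-1}\big(N(\Homi_G(x,x)^\o)\big) \cong \pi_n(NG, x)$, and likewise for $H$. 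Since the cospan $\Dn{0} \to G \ot \Dn{0}$ defining the loop space maps to the corresponding cospan for $H$, the induced map of homotopy pullbacks is, under this identification, $\pi_n(Nf)$. Applying the inductive hypothesis to the equivalence $\Homi_G(x,x)^\o \to \Homi_H(f(x), f(x))^\o$ at the basepoint $\id{x}$, this map is an isomorphism, closing the induction; combined with the $\pi_0$ computation this shows that $Nf$ is a weak equivalence, i.e.\ that $f$ is a Thomason equivalence.

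The main obstacle I anticipate is the verification that $f$ restricts to an equivalence of \oo-groupoids $\Homi_G(x,x) \to \Homi_H(f(x), f(x))$: one must unwind the definition and check that the essential surjectivity of the restricted \oo-functor comes from the lifting condition of $f$ at $i = 0$, while its lifting conditions at level $i$ come from those of $f$ at level $i+1$, keeping track of the sources and targets of the cells involved and of the compatibility with $(\var)^\o$. Granting this bookkeeping, together with the (standard) facts that $\Homi_G(x,x)$ is an \oo-groupoid and that $(\var)^\o$ preserves \oo-groupoids and their equivalences, the induction goes through and the proposition follows.
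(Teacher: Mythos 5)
Your proof is correct in outline but takes a genuinely different route from the paper's. The paper never touches homotopy groups: it identifies equivalences of \oo-groupoids with the folk weak equivalences between \oo-groupoids, invokes Ken Brown's lemma to reduce to folk trivial fibrations, and shows by adjunction that the nerve of such a fibration is a trivial Kan fibration, the key computation being that the left adjoint $c$ of $N$ sends $\partial\Deltan{n} \hookto \Deltan{n}$ to the folk cofibration freely adding the top cell of $\On{n}$. Your route --- a $\pi_0$ computation followed by an induction on the degree of the homotopy group powered by Theorem~\ref{thm:loop} --- has the virtue of staying entirely inside results already proved in this paper and of avoiding the folk model structure altogether; but it shifts the burden onto bookkeeping the paper's proof never has to do: (i) the check that $f$ restricts to an equivalence $\Homi_G(x,x)^\o \to \Homi_H(f(x),f(x))^\o$ (true, with the index shift you describe, and using invertibility to absorb the orientation reversal caused by $(\var)^\o$); (ii) the naturality in the pointed \oo-groupoid of the identification of $N(\Homi_A(a,a)^\o)$ with the loop space, which requires tracing the zigzag in the proof of Corollary~\ref{coro:comma_homot} and verifying that the object $\id{x}$ lands in the component of the constant loop, so that the boundary isomorphisms $\pi_n(NG,x)\simeq\pi_{n-1}$ match up on both sides of the square; (iii) for injectivity on $\pi_0$, a zigzag of $1$-cells joining $f(x)$ to $f(x')$ must first be composed into a single $1$-cell before the lifting condition applies --- harmless here since all $1$-cells are invertible, but worth saying. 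None of these is a gap, though each deserves to be written out; the paper's proof trades them for external input (the folk model structure of \cite{LMW} and the description of $c$ on simplicial complexes from \cite{AraMaltsiCondE}) and obtains along the way the stronger statement that nerves of folk trivial fibrations are trivial Kan fibrations.
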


\begin{proof}
  The equivalences of \oo-groupoids are precisely the weak equivalences
  between \oo-groupoids of the so-called folk model category structure on
  $\ooCat$ \cite{LMW} (see also~\cite{AraMetWGrp}). To prove the result, it
  thus suffices, using Ken Brown's lemma, to show that the trivial
  fibrations of the folk model category structure are Thomason equivalences.
  We will see that the nerve of such a trivial fibration is actually a
  trivial Kan fibration. By adjunction, to prove this, it suffices to show
  that the left adjoint $c : \pref{\cDelta} \to \ooCat$ of the nerve functor
  sends the inclusions $\partial\Deltan{n} \hookto \Deltan{n}$, where $n \ge
  0$ and $\partial\Deltan{n}$ denotes the boundary of $\Deltan{n}$, to a
  cofibration of the folk model category structure. The explicit description
  of $c(K)$, where $K$ is a simplicial complex, given
  % TOCHECK
  in~\cite[Section 9]{AraMaltsiCondE}, shows that
  $c(\partial\Deltan{n})$ is the underlying $(n-1)$-category of the
  $n$-category $c(\Deltan{n}) = \On{n}$. In other words, the \oo-functor
  $c(\partial\Deltan{n}) \to c(\Deltan{n})$ corresponds to the free addition
  of the unique non-trivial $n$-cell of $\On{n}$, and is hence, by
  definition, a folk cofibration, thereby proving the result.
\end{proof}

\begin{remark}
  One can actually show that an \oo-functor between \oo-groupoids is an
  equivalence of \oo-groupoids if and only if it is a Thomason equivalence.
\end{remark}

\begin{theorem}
  Let $G$ be a strict \oo-groupoid endowed with an object $x$. Then
  the loop space of~$(G, x)$ is a product of Eilenberg--Mac Lane spaces
  (including the discrete space~$K(E, 0)$, for $E$ a set, as an
  Eilenberg--Mac Lane space).
\end{theorem}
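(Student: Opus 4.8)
The plan is to deduce the statement from Theorem~\ref{thm:loop} together with the special, ``linear'', nature of strict \oo-groupoids. First I would apply Theorem~\ref{thm:loop} to the \oo-category $G$ and its object $x$. Its hypothesis is automatically satisfied here: for a $1$\nbd-cell $f : a' \to a''$ of $G$, the underlying \oo-functor $\Homi_G(a'', x) \to \Homi_G(a', x)$ is precomposition by $f$, and since $f$ is strictly invertible in the \oo-groupoid $G$, precomposition by $f^{-1}$ provides a strict inverse; this functor is thus an isomorphism, \emph{a fortiori} a Thomason equivalence, and this is preserved by the duality $(\var)^\o$. Theorem~\ref{thm:loop} therefore gives that $L := \Homi_G(x, x)^\o$, which is again a strict \oo-groupoid, is a model for the loop space of $(G, x)$, in the sense that $N(L)$ has the homotopy type of $\Omega(NG, x)$.

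Next I would exploit the fact that $N(L) \simeq \Omega(NG,x)$ is a loop space, hence a grouplike topological monoid up to homotopy. Its path components are then pairwise homotopy equivalent via translation, so that $N(L)$ splits, up to homotopy, as a product of the discrete space $\pi_0 N(L) = \pi_1(NG, x)$ with its unit component. This accounts for the factor $\Ktop{E}{0}$, with $E = \pi_1(NG, x)$, and reduces the statement to showing that the unit component is a product of Eilenberg--Mac Lane spaces. This unit component is canonically $\Omega$ of the universal cover $\widetilde{NG}$, which is simply connected.

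It then remains to prove that the universal cover of the classifying space of a strict \oo-groupoid is a product of Eilenberg--Mac Lane spaces; looping this product yields again such a product and finishes the proof. Concretely, one shows that the simply connected homotopy type $\widetilde{NG}$ is modelled by a strict \oo-groupoid that is Thomason equivalent to a product $\prod_{n \ge 2} \Kcat{A_n}{n}$ for suitable abelian groups $A_n \simeq \pi_n(NG, x)$. Granting such a decomposition, one concludes using that the Street nerve preserves products: $N\bigl(\prod_{n} \Kcat{A_n}{n}\bigr) = \prod_{n} N(\Kcat{A_n}{n}) = \prod_{n} \Ktop{A_n}{n}$ by Theorem~\ref{thm:Kpn}.

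The main obstacle is exactly this last decomposition, that is, the fact that a simply connected (more generally, grouplike and ``abelian'') strict \oo-groupoid has no nontrivial Postnikov invariants. This is the one genuinely nonformal point, and the place where strictness is essential: it is governed by the Eckmann--Hilton phenomenon, which forces the iterated compositions coming from the $0$\nbd-composition of $G$ and from the internal structure of $L$ to agree and to be commutative, thereby trivialising the $k$\nbd-invariants. Everything else --- the application of Theorem~B through Theorem~\ref{thm:loop}, the grouplike splitting off of $\pi_0$, and the final identification of the factors via Theorem~\ref{thm:Kpn} and the preservation of products by $N$ --- is formal. I expect this structural input to be imported from the homotopy theory of strict \oo-groupoids (equivalently, of crossed complexes) rather than from the comma-category machinery developed in this paper.
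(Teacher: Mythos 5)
Your overall strategy matches the paper's: apply Theorem~\ref{thm:loop} (whose hypothesis holds because the $1$-cells of $G$ are strictly invertible, so the induced \oo-functors are isomorphisms), reduce to a single connected component using that all components of a loop space are weakly equivalent, invoke a splitting theorem for suitably reduced strict \oo-groupoids, and conclude with Theorem~\ref{thm:Kpn} and the fact that $N$ preserves products. Where you diverge is the middle step. You pass to the universal cover $\widetilde{NG}$ and assert that this simply connected homotopy type ``is modelled by a strict \oo-groupoid'' that splits as $\prod_{n \ge 2} \Kcat{A_n}{n}$; this is true (via covering theory for crossed complexes), but it is an extra construction you do not supply, and nothing in the paper's toolkit produces a strict \oo-groupoid model of a covering space. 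The paper avoids universal covers altogether: it observes that the connected component of $\id{x}$ in $\Homi_G(x,x)$ is equivalent to the full sub-\oo-groupoid on the object $\id{x}$, which is the ``shift down'' of the sub-\oo-groupoid $G' \subseteq G$ having the single object $x$ and the single $1$-cell $\id{x}$, and then applies directly the result that such a doubly reduced strict \oo-groupoid is equivalent to $\prod_{n \ge 2} \Kcat{\pi_n}{n}$ (\cite[Theorem~4.17]{AraTypHomStr}); shifting down gives $\prod_{n \ge 1} \Kcat{\pi_{n+1}}{n}$ for the component. Both routes rest on the same nonformal input --- the absence of nontrivial Postnikov invariants for strict \oo-groupoids that are trivial in degrees $\le 1$ --- which you correctly identify as the essential point and which is indeed imported from the homotopy theory of strict \oo-groupoids rather than from the comma machinery. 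To make your version complete, either supply the \oo-groupoid model of the universal cover, or replace that step by the paper's shift-down argument, which needs no covers.
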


\begin{proof}
  As every $1$-cell of $G$ is invertible, the hypothesis of
  Theorem~\ref{thm:loop} is satisfied and we get that $\Homi_G(x, x)^\o$,
  which is isomorphic to $\Homi_G(x, x)$ as $G$ is an \oo-groupoid, is the
  loop space of $(G, x)$. As all the connected components of a loop space
  are weakly equivalent, to prove the result, it suffices to show that the
  connected component of any object of $\Homi_G(x, x)$ is a product of
  Eilenberg--Mac Lane spaces.
  Consider the object $\id{x} : x \to x$. Its connected
  component is equivalent to the full sub-\oo-groupoid of~$\Homi_G(x, x)$
  whose only object is $\id{x}$.
  This \oo-groupoid is obtained by ``shifting down'' a sub-\oo-groupoid
  $G'$ of $G$ having only one object (namely $x$) and one $1$-cell (namely
  $\id{x}$). Such an \oo-groupoid $G'$ is known to be equivalent to a
  product of the form $\prod_{n \ge 2} \Kcat{\pi_n}{n}$ (see for
  instance \cite[Theorem~4.17]{AraTypHomStr}) and the connected component of
  $\id{x}$ is thus equivalent to $\prod_{n \ge 1} \Kcat{\pi_{n+1}}{n}$. The
  result thus follows from~Theorem~\ref{thm:Kpn}.
\end{proof}

\appendix

\section{A contraction of the oriental}
\label{app:contr_orient}

The purpose of this appendix is to construct the oplax transformation
retract needed in the proof of Proposition~\ref{prop:lemma}.

\begin{paragraph}
  The appendix relies on Steiner's theory of augmented directed complexes
  as developed in \cite{Steiner}. We will recall the minimal
  amount of information needed to follow our arguments and we refer the
  reader to \cite[Section 2]{AraMaltsiJoint} for a comprehensive
  introduction to this theory in the spirit of our paper.

  We will denote by $\Cad$ the category of augmented directed complexes.
  Recall that an \ndef{augmented directed complex} is an augmented
  complex $K$ (of abelian groups in nonnegative degrees) endowed, for every $p
  \ge 0$, with a submonoid $K_p^\ast$ of $K^{}_p$ of \ndef{positive
  $p$-chains}, and that a \ndef{morphism of augmented directed complexes} is a
  morphism of augmented complexes sending positive $p$-chains to positive
  $p$-chains. Similarly, a \ndef{homotopy} between two such morphisms is a
  homotopy, in the classical sense, sending positive $p$-chains to
  positive $(p+1)$-chains.

  To any augmented directed complex, Steiner associates an \oo-category thus
  defining a functor $\nu : \Cad \to \ooCat$. We will not need the precise
  definition of this functor and we will recall all the properties of $\nu$
  we will use.
\end{paragraph}

\begin{paragraph}
  We will denote by $\cn : \pref{\cDelta} \to \Cad$ the \ndef{normalized complex
  functor}. If $X$ is a simplicial set, the underlying augmented complex
  of $\cn X$ is the classical normalized complex (its $p$-chains are freely
  generated by nondegenerate $p$-simplices of $X$) and~$(\cn X)^\ast_p$,
  for $p \ge 0$, consists of $p$-chains with nonnegative coefficients. In
  particular, if $m \ge 0$ and $p \ge 0$, we have $(\cn\Deltan{m})_p \simeq
  \Z^{(B_{m,p})}$ where
  \[
    B_{m, p} = \{(i_0, \dots, i_p) \mid 0 \le i_0 < \cdots < i_p \le m\}.
  \]
  We will call the graded set $\coprod_p B_{m, p}$ the \ndef{base} of
  $\cn\Deltan{m}$. (It is the unique base in some precise sense that we
  will not need.)
\end{paragraph}

\begin{paragraph}\label{paragr:def_orientals}
  By composing the functors
  \[
    \xymatrix{
      \cDelta \ar[r]^-y &
      \pref{\cDelta} \ar[r]^-\cn &
      \Cad \ar[r]^-\nu &
      \ooCat \pbox{,}
    }
  \]
  where $y$ denotes the Yoneda embedding, we get a cosimplicial object
  \[ \cO : \cDelta \to \ooCat. \]
  This is Steiner's definition of Street's \ndef{cosimplicial object of
  orientals}. For $n \ge 0$, the \oo-category $\On{n}$ is the \ndef{$n$-th
  oriental}. The cosimplicial object $\cO$ induces the so-called
  \ndef{Street nerve}
  \[ N : \ooCat \to \pref{\cDelta}, \]
  sending an \oo-category $C$ to the simplicial set $NC : \Deltan{p} \mapsto
  \Hom_{\ooCat}(\On{p}, C)$. This nerve functor admits as a left adjoint
  the left Kan extension of $\cO : \cDelta \to \ooCat$ along the Yoneda
  embedding.
\end{paragraph}

\goodbreak

\emph{From now on, we fix $m \ge 0$.}

\medskip

We will start by showing that $\cn\Deltan{m}$ retracts by deformation on
$\cn\Deltan{0}$ in some appropriate sense.

\begin{paragraph}\label{paragr:def_homotopy_Cda}
  Consider the morphism $\cn(0) : \cn\Deltan{0} \to \cn\Deltan{m}$
  induced by the simplicial morphism $0 : \Deltan{0} \to \Deltan{m}$
  corresponding to the $0$-simplex $0$ of $\Deltan{m}$, and the morphism
  $\cn(r) : \cn\Deltan{m} \to \cn\Deltan{0}$ induced by the
  unique morphism $r : \Deltan{m} \to \Deltan{0}$. By functoriality, we have
  $\cn(r)\cn(0) = \id{\cn\Deltan{0}}$. We will see that $\cn(0)\cn(r)$ is
  homotopic to $\id{\cn\Deltan{m}}$. For~$p \ge 0$, we define
  \[ h_p : (\cn\Deltan{m})_p \to (\cn\Deltan{m})_{p+1} \]
  by
  \[
    h_p(i_0, \dots, i_p) =
    \begin{cases}
      (0, i_0, \dots, i_p) & \text{if $i_0 > 0$,} \\
       0 & \text{if $i_0 = 0$.}
    \end{cases}
  \]
  Adopting the convention that, for $0 \le j_0 \le \cdots \le j_q \le m$, if
  the sequence of the $j_k$ is not strictly increasing then
  $(j_0, \dots, j_q) = 0$ in $(\cn\Deltan{m})_q$, we can simply write
  \[ h_p(i_0, \dots, i_p) = (0, i_0, \dots, i_p). \]
\end{paragraph}

\begin{proposition}\label{prop:h}
  The morphisms $h_p$ introduced in the previous paragraph define a homotopy
  $h$ from $\cn(0)\cn(r)$ to $\id{\cn\Deltan{m}}$.
\end{proposition}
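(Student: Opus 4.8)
The plan is to verify directly that the family $(h_p)_{p \ge 0}$, completed by $h_{-1} = 0$, satisfies the defining relation
\[ \partial h + h \partial = \id{\cn\Deltan{m}} - \cn(0)\cn(r) \]
of a homotopy from $\cn(0)\cn(r)$ to $\id{\cn\Deltan{m}}$, and then that it respects positivity. First I would make $\cn(0)\cn(r)$ explicit. Since $\Deltan{0}$ has a single nondegenerate simplex, the complex $\cn\Deltan{0}$ is concentrated in degree $0$; hence $\cn(r)$ vanishes in degrees $p \ge 1$ and sends every vertex $(i)$ of $\cn\Deltan{m}$ to the unique vertex of $\cn\Deltan{0}$, so that $\cn(0)\cn(r)$ is zero in degrees $p \ge 1$ and sends $(i)$ to $(0)$ in degree $0$.

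In degree $0$, with $h_{-1} = 0$, the relation reduces to $\partial h_0(i) = (i) - (0)$. For $i > 0$ one has $h_0(i) = (0, i)$ and $\partial(0, i) = (i) - (0)$; for $i = 0$ both sides vanish, the left-hand side because $h_0(0) = (0, 0) = 0$ by the convention of paragraph~\ref{paragr:def_homotopy_Cda}. In degree $p \ge 1$ I would expand, using the alternating-sum formula for $\partial$ on the normalized complex,
\[ \partial h_p(i_0, \dots, i_p) = (i_0, \dots, i_p) - \sum_{k=0}^{p} (-1)^{k} (0, i_0, \dots, \widehat{i_k}, \dots, i_p), \]
the leading term arising from deleting the inserted $0$. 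Applying $h_{p-1}$ termwise to $\partial(i_0, \dots, i_p) = \sum_{k=0}^{p} (-1)^k (i_0, \dots, \widehat{i_k}, \dots, i_p)$ gives exactly $\sum_{k=0}^{p} (-1)^k (0, i_0, \dots, \widehat{i_k}, \dots, i_p)$, so the two sums cancel and we are left with $(i_0, \dots, i_p)$, the correct value since $\cn(0)\cn(r)$ vanishes in this degree. The computation is uniform in $i_0$: when $i_0 = 0$ the tuples starting with two zeros are annihilated by the convention, and the surviving terms still reassemble to the right answer.

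Finally I would check that $h$ is a homotopy in the sense appropriate to $\Cad$: by construction each $h_p$ carries a basis element $(i_0, \dots, i_p)$ either to the basis element $(0, i_0, \dots, i_p)$ (when $i_0 > 0$) or to $0$ (when $i_0 = 0$), hence sends positive $p$-chains, which are the nonnegative combinations of basis elements, to positive $(p+1)$-chains. I do not expect a genuine obstacle: this $h$ is the standard contracting homotopy exhibiting $\Deltan{m}$ as a \emph{cone} on its vertex $0$, and the only delicate point is the bookkeeping imposed by the normalization convention, which is exactly what makes the inserted-zero terms cancel cleanly instead of leaving spurious degenerate contributions.
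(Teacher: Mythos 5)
Your verification is correct and follows essentially the same route as the paper: a direct check of the identity $\partial h + h\partial = \id{} - \cn(0)\cn(r)$, split into degree $0$ and degree $p\ge 1$, with the convention that non-strictly-increasing tuples vanish making the two alternating sums cancel uniformly. The explicit remark that $h$ preserves positive chains (needed for it to be a homotopy in $\Cad$) is left implicit in the paper but is immediate, as you note.
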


\begin{proof}
  Let $(i_0, \dots, i_p)$ be an element of the base of $\cn\Deltan{m}$.
  Note first that we have
  \[
    \cn(0)\cn(r)(i_0, \dots, i_p) =
    \begin{cases}
      (0) & \text{if $p = 0$,} \\
      0 & \text{otherwise.}
    \end{cases}
  \]
  To prove that $h$ is a homotopy, we distinguish two cases:
  \begin{itemize}
    \item If $p = 0$, then we have
      \[ dh(i_0) = d(0, i_0) = (i_0) - (0) = (i_0) - \cn(0)\cn(r)(i_0). \]
    \item If $p \ge 1$, then we have
      \[
        \begin{split}
          \MoveEqLeft
          dh(i_0, \dots, i_p) + hd(i_0, \dots, i_p)  \\
          & = d(0, i_0, \dots, i_p) + \sum_{k = 0}^p (-1)^k h(i_0, \dots, \hat i_k, \dots, i_p) \\
          & = (i_0, \dots, i_p) + \sum_{k = 0}^{p} (-1)^{k+1} (0, i_0, \dots, \hat
          i_k, \dots, i_p) \\*
          & \phantom{=1} \qquad + \sum_{k = 0}^p (-1)^k (0, i_0, \dots, \hat i_k, \dots, i_p) \\
          & = (i_0, \dots, i_p) - 0 \\
          & = (i_0, \dots, i_p) - \cn(0)\cn(r)(i_0, \dots, i_p).
          \qedhere
        \end{split}
      \]
  \end{itemize}
\end{proof}

\goodbreak

We now recall how such a homotopy induces an oplax transformation.

\begin{paragraph}
  If $K$ and $L$ are two augmented directed complexes, we define their
  tensor product $K \otimes L$ in the following way: the underlying
  augmented complex is the classical tensor product of the underlying
  augmented complexes, and the positive chains are generated by tensor
  products of positive chains. By \cite[Proposition A.19]{AraMaltsiJoint},
  there exists a canonical \oo-functor
  \[ \nu(K) \otimes \nu(L) \to \nu(K \otimes L), \]
  where the tensor product on the left is the Gray tensor product.

  In particular, if $K$ is an augmented directed complex, we get an augmented
  directed complex $\cn\Deltan{1} \otimes K$. Moreover, the \oo-category
  $\nu(\cn\Deltan{1})$ is canonically isomorphic to~$\Dn{1}$ and we thus get
  an \oo-functor $\Dn{1} \otimes \nu(K) \to \nu(\cn\Deltan{1} \otimes L)$.
  One checks that if $L$ is a second augmented directed complex, then
  morphisms $\cn\Deltan{1} \otimes K \to L$ correspond to homotopies between
  morphisms from $K$ to $L$. This implies that if $h$ is a homotopy between
  morphisms from $K$ to $L$, we get an oplax transformation $\nu(h)$ by
  composing
  \[
    \xymatrix{\Dn{1} \otimes \nu(K) \ar[r] &
      \nu(\cn\Deltan{1} \otimes K) \ar[r]^-{\nu(h)} &
      \nu L \pbox{.}
     }
  \]
  Moreover, if $h$ is a homotopy from $f$ to $g$, then $\nu(h)$ is an oplax
  transformation from~$\nu(f)$ to $\nu(g)$.

  In our case of interest, that is, the case where $K = \cn\Deltan{m}$,
  the canonical morphism $\Dn{1} \otimes \On{m} \to \nu(\cn\Deltan{1}
  \otimes \cn\Deltan{m})$ is an isomorphism (see for instance
  \cite[Proposition 7.5 and Theorem A.15]{AraMaltsiJoint}),
  which we will consider as an equality.
\end{paragraph}

\goodbreak

We finally produce the announced structure of oplax transformation retract.

\begin{paragraph}
  We will denote by
  \[ 0 : \On{0} \to \On{m} \quadand r : \On{m} \to \On{0} \]
  the \oo-functors induced by the simplicial maps $0 : \Deltan{0} \to
  \Deltan{m}$ and $r : \Deltan{m} \to \Deltan{0}$. Recall that $\On{0}$ is
  the terminal \oo-category $\Dn{0}$ and that the \oo-functor $0$ corresponds to
  the object $0$ of $\On{m}$. The \oo-functor $r$ is obviously a retraction
  of $0$.

  By applying the considerations of the previous paragraph to the
  homotopy~$h$ of Proposition~\ref{prop:h}, we obtain an oplax transformation
  $\alpha$ from the composite $\On{m} \xto{r} \On{0} \xto{0} \On{m}$ to the
  identity of $\On{m}$. By definition, this oplax transformation is obtained
  by applying the functor $\nu$ to the morphism $\cn\Deltan{1} \otimes
  \cn\Deltan{m} \to \cn\Deltan{m}$, that we will still denote by $h$,
  given by
  \[
    \begin{split}
      h((0) \otimes (i_0, \dots, i_p)) & =
        \begin{cases}
          (0) & \text{if $p = 0$,} \\
          0 & \text{if $p > 0$,}
        \end{cases} \\
      h((1) \otimes (i_0, \dots, i_p)) & = (i_0, \dots, i_p), \\
      h((0,1) \otimes (i_0, \dots, i_p)) & = (0, i_0, \dots, i_p), \\
    \end{split}
  \]
  where $(i_0, \dots, i_p)$ is in the base of $\cn\Deltan{m}$.
\end{paragraph}

\begin{proposition}\label{prop:contr_orient}
  The \oo-functor $0 : \On{0} \to \On{m}$ is a strong left oplax
  transformation retract. More precisely, the pair $(r, \alpha)$, introduced
  in the previous paragraph, is a strong left oplax transformation retract
  structure on $0 : \On{0} \to \On{m}$.
\end{proposition}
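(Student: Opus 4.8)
The plan is to check directly the two conditions defining a strong left oplax transformation retract structure on $0 : \On{0} \to \On{m}$, observing that most of the work is already contained in the preceding paragraph. That $r$ is a retraction of $0$ is immediate, since $\On{0} = \Dn{0}$ is terminal, so that $r\,0 = \id{\On{0}}$. That $\alpha$ is an oplax transformation from $0\,r$ to $\id{\On{m}}$ also follows from the construction: by Proposition~\ref{prop:h}, $h$ is a homotopy from $\cn(0)\cn(r)$ to $\id{\cn\Deltan{m}}$, and since applying $\nu$ to a homotopy from $f$ to $g$ yields an oplax transformation from $\nu(f)$ to $\nu(g)$, the transformation $\alpha = \nu(h)$ goes from $\nu(\cn(0)\cn(r)) = 0\,r$ to $\id{\On{m}}$. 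Hence $(r, \alpha)$ is already a structure of left oplax transformation retract on $0$, and the only new point to establish is that this structure is strong, that is, that $\alpha \comp 0 = \id{0}$.

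To verify the strong condition, I would compute $\alpha \comp 0$ at the level of augmented directed complexes. By functoriality of $\nu$ and $\cn$, precomposing the oplax transformation $\alpha = \nu(h)$ with the \oo-functor $0 = \nu(\cn(0))$ corresponds, through the canonical isomorphisms of the previous paragraph, to precomposing the morphism $h : \cn\Deltan{1} \otimes \cn\Deltan{m} \to \cn\Deltan{m}$ with $\id{\cn\Deltan{1}} \otimes \cn(0) : \cn\Deltan{1} \otimes \cn\Deltan{0} \to \cn\Deltan{1} \otimes \cn\Deltan{m}$. Since $\cn(0)$ sends the unique basis element $(0)$ of $\cn\Deltan{0}$ to the vertex $(0)$ of $\cn\Deltan{m}$, evaluating the resulting morphism on the three-element base of $\cn\Deltan{1} \otimes \cn\Deltan{0}$ and using the explicit formulas for $h$ gives
\[
  (0) \otimes (0) \mapsto (0), \qquad
  (1) \otimes (0) \mapsto (0), \qquad
  (0,1) \otimes (0) \mapsto (0,0) = 0,
\]
where the last equality uses the convention of paragraph~\ref{paragr:def_homotopy_Cda} that a tuple which is not strictly increasing is zero.

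Finally, I would identify this morphism with the one defining $\id{0}$. The identity oplax transformation of $0$ is obtained by collapsing the $\Dn{1}$-direction via $\kappa$, so at the level of augmented directed complexes it corresponds to the composite
\[
  \cn\Deltan{1} \otimes \cn\Deltan{0}
  \xto{\cn(\kappa) \otimes \cn\Deltan{0}}
  \cn\Deltan{0} \otimes \cn\Deltan{0}
  = \cn\Deltan{0}
  \xto{\cn(0)} \cn\Deltan{m} \pbox{;}
\]
since $\cn(\kappa)$ sends $(0)$ and $(1)$ to $(0)$ and the top basis element $(0,1)$ to $0$, this composite sends $(0) \otimes (0)$ and $(1) \otimes (0)$ to $(0)$ and $(0,1) \otimes (0)$ to $0$, exactly as computed above. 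As a morphism out of $\cn\Deltan{1} \otimes \cn\Deltan{0}$ is determined by its values on this base, the two morphisms coincide, whence $\alpha \comp 0 = \id{0}$ after applying $\nu$. The main, and essentially only, subtlety is this last bookkeeping: correctly matching the precomposed homotopy with the collapsing homotopy defining the identity oplax transformation, the decisive point being that the top basis element $(0,1) \otimes (0)$ is sent to the degenerate tuple $(0,0) = 0$.
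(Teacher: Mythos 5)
Your proof is correct, but for the only nontrivial point --- the strongness condition $\alpha \comp 0 = \id{0}$ --- it takes a genuinely different route from the paper. You verify strongness by an explicit chain-level computation: transporting $\alpha \comp 0$ through the canonical isomorphism $\Dn{1} \otimes \On{0} \simeq \nu(\cn\Deltan{1} \otimes \cn\Deltan{0})$, evaluating $h \circ (\cn\Deltan{1} \otimes \cn(0))$ on the three basis elements, and matching the result with the chain map underlying $\id{0}$; the computation is right, including the decisive degeneracy $(0,0) = 0$. The paper instead disposes of strongness with a one-line structural observation: $\alpha \comp 0$ is an oplax transformation from the \oo-functor $0 : \On{0} \to \On{m}$ to itself, hence amounts to a $1$-cell of $\On{m}$ from the object $0$ to itself, and the only such $1$-cell is the identity (the generating $1$-cells of the oriental strictly increase). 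Your computation is more self-contained --- it uses nothing about the cell structure of $\On{m}$ beyond its chain-level description, and it would survive in situations where the target had nontrivial endomorphism $1$-cells --- at the price of some bookkeeping with the naturality of the comparison maps between $\nu$ and the two tensor products, which you do handle correctly. Both arguments treat the remaining points (that $r$ is a retraction of $0$ and that $\alpha$ goes from $0r$ to $\id{\On{m}}$) identically, as immediate consequences of the preceding paragraph and Proposition~\ref{prop:h}.
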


\begin{proof}
  This follows from the previous paragraph. (The condition of strongness
  is automatic as the identity of $0$ is the only $1$-cell from $0$ to $0$
  in $\On{m}$.)
\end{proof}

\goodbreak

We end the appendix with a compatibility result, needed in the proof of
Proposition~\ref{prop:lemma}, between the oplax transformation $\alpha$ and a
classical simplicial homotopy.

\begin{paragraph}\label{paragr:desc_h}
  We will denote by
  \[ k : \Deltan{1} \times \Deltan{m} \to \Deltan{m} \]
  the unique simplicial homotopy from the constant endofunctor of
  $\Deltan{m}$ of value~$0$ to the identity of~$\Deltan{m}$. Recall that
  $k$ sends a $p$-simplex $(\phi, \psi) : \Deltan{p} \to \Deltan{1} \times
  \Deltan{m}$ of $\Deltan{1} \times \Deltan{m}$ to the $p$-simplex
  $(0, \dots, 0, \psi(r), \dots, \psi(p))$ of $\Deltan{m}$, where $r$
  denotes the number of $0$ in the sequence $\phi(0), \dots,
  \phi(p)$.
\end{paragraph}

\begin{paragraph}\label{paragr:desc_e}
  We will denote by $\eta : \Deltan{m} \to N(\On{m})$ the adjunction morphism.
  This morphism sends a $p$-simplex $\psi : \Deltan{p} \to
  \Deltan{m}$ of $\Deltan{m}$ to the $p$-simplex $\cO(\psi) : \On{p} \to
  \On{m}$ of $N(\On{m})$. By definition of $\cO$, we have $\cO(\psi) =
  \nu\cn(\psi)$, with $\cn(\psi) : \cn\Deltan{p} \to \cn\Deltan{m}$
  defined on the base of $\cn\Deltan{p}$ by
  \[ \cn(\psi)(i_0, \dots, i_q) = (\psi(i_0), \dots, \psi(i_q)), \]
  where, following the convention introduced in
  paragraph~\ref{paragr:def_homotopy_Cda}, the right member is zero if
  the sequence $\psi(i_0), \dots, \psi(i_q)$ is not strictly increasing.
\end{paragraph}

\begin{paragraph}\label{paragr:desc_Nalpha}
  Let $u, v : A \to B$ be two \oo-functors and let $\beta : u \tod v$ be an
  oplax transformation. Following~\cite[Appendix A]{AraMaltsiThmAII}, we
  define a simplicial homotopy
  \[ N \beta : \Deltan{1} \times NA \to NB \]
  from $Nu$ to $Nv$ in the following way. Let $(\phi, x) : \Deltan{p} \to
  \Deltan{1} \times NA$ be a $p$-simplex of~$\Deltan{1}
  \times NA$. By definition, the homotopy $N\beta$ sends $(\phi, x)$ to the
  $p$-simplex
  \[
    \xymatrix@C=2.5pc{
      \On{p} \ar[r]^-{\nu(g_\phi)} & \Dn{1} \otimes \On{p} \ar[r]^-{\Dn{1}
      \otimes x} & \Dn{1} \otimes A \ar[r]^-\beta & B
    }
  \]
  of $NB$, where
  \[ g_\phi : \cn\Deltan{p} \to \cn\Deltan{1} \otimes \cn\Deltan{p} \]
  is the morphism defined as follows. Let $(i_0, \dots, i_q)$ be an
  element of the base of~$\cn\Deltan{p}$ and denote by $r$ the number of $0$
  in the sequence $\phi(i_0), \dots, \phi(i_q)$. The morphism $g_\phi$ is defined
  by
  \[
     g_\phi(i_0, \dots, i_q) =
     \begin{cases}
       (1) \otimes (i_0, \dots, i_q) & \text{if $r = 0$,} \\
       (0) \otimes (i_0, \dots, i_q) + (0, 1) \otimes (i_1, \dots, i_q) &
       \text{if $r = 1$,} \\
       (0) \otimes (i_0, \dots, i_q) & \text{if $r \ge 2$,} \\
     \end{cases}
  \]
  where $(i_1, \dots, i_q) = 0$ for $q = 0$.
\end{paragraph}

\begin{proposition}\label{prop:trans_compat}
  The square
  \[
    \xymatrix@C=3pc{
      \Deltan{1} \times \Deltan{m} \ar[r]^-{\Deltan{1} \times \eta} \ar[d]_{k}
      &
      \Deltan{1} \times N(\On{m}) \ar[d]^{N\alpha}
      \\
      \Deltan{m} \ar[r]_-\eta
      &
      N(\On{m})
    }
  \]
  commutes.
\end{proposition}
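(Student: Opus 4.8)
The statement is an equality of maps of simplicial sets, so it suffices to check it on a $p$\nbd-simplex $(\phi, \psi)$ of $\Deltan{1} \times \Deltan{m}$, where $\phi : \Deltan{p} \to \Deltan{1}$ and $\psi : \Deltan{p} \to \Deltan{m}$. Both composites produce a $p$\nbd-simplex of $N(\On{m})$, that is, an \oo-functor $\On{p} \to \On{m}$. By the explicit descriptions of $\eta$ and of $N\alpha$ recalled in paragraphs~\ref{paragr:desc_e} and~\ref{paragr:desc_Nalpha}, and since $\alpha = \nu(h)$ for the morphism $h : \cn\Deltan{1} \otimes \cn\Deltan{m} \to \cn\Deltan{m}$ fixed above, each of these two \oo-functors is the image under the functor $\nu$ of a morphism of augmented directed complexes from $\cn\Deltan{p}$ to $\cn\Deltan{m}$. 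The plan is therefore to identify these two morphisms of $\Cad$ and to check their equality on the base $\coprod_q B_{p, q}$ of $\cn\Deltan{p}$; equality before $\nu$ then yields equality of the two $p$\nbd-simplices.

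Going along the bottom and right edges, the resulting simplex is $\eta(k(\phi, \psi))$, which by paragraph~\ref{paragr:desc_e} is $\nu$ of the morphism $\cn(k(\phi, \psi))$ sending a base element $(i_0, \dots, i_q)$ to $(k(\phi,\psi)(i_0), \dots, k(\phi,\psi)(i_q))$, with the convention of paragraph~\ref{paragr:def_homotopy_Cda}; recall from paragraph~\ref{paragr:desc_h} that, $\phi$ being monotone, $k(\phi,\psi)$ sends a vertex $i$ to $0$ if $\phi(i) = 0$ and to $\psi(i)$ if $\phi(i) = 1$. Going along the top and left edges, functoriality of $\nu$ together with the identification $\Dn{1} \otimes \On{p} = \nu(\cn\Deltan{1} \otimes \cn\Deltan{p})$ (and naturality of the comparison \oo-functor $\nu(K) \otimes \nu(L) \to \nu(K \otimes L)$, which turns $\Dn{1} \otimes \nu\cn(\psi)$ into $\nu(\cn\Deltan{1} \otimes \cn(\psi))$) shows that the simplex is $\nu$ of the composite
\[
  F = h \circ (\cn\Deltan{1} \otimes \cn(\psi)) \circ g_\phi
  : \cn\Deltan{p} \to \cn\Deltan{m}.
\]
It thus remains to prove the equality $F = \cn(k(\phi, \psi))$ in $\Cad$.

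I would check this on a base element $(i_0, \dots, i_q)$ by distinguishing cases according to the number $s$ of indices $j$ with $\phi(i_j) = 0$, matching the three-case definition of $g_\phi$. When $s = 0$, the formula for $g_\phi$ gives $(1) \otimes (i_0, \dots, i_q)$, and applying $\cn\Deltan{1} \otimes \cn(\psi)$ and then the identity formula $h((1) \otimes c) = c$ yields $(\psi(i_0), \dots, \psi(i_q))$. When $s = 1$, only $\phi(i_0) = 0$; the term $(0) \otimes (i_0, \dots, i_q)$ is killed by $h$ as soon as $q \ge 1$, while $(0,1) \otimes (i_1, \dots, i_q)$ is sent by $h$ to $(0, \psi(i_1), \dots, \psi(i_q))$, the case $q = 0$ giving $(0)$. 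When $s \ge 2$, one has $\phi(i_0) = \phi(i_1) = 0$, so $g_\phi$ gives $(0) \otimes (i_0, \dots, i_q)$ with $q \ge 1$, which $h$ kills. In each case the result matches $\cn(k(\phi,\psi))(i_0, \dots, i_q)$: for $s \ge 2$ the target sequence begins with two zeros and so vanishes by convention, while for $s \le 1$ it reproduces exactly the description of $k$ recalled above.

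The content is entirely in this case analysis; the only point requiring care is the coordination of the three definitions of $k$, $h$ and $g_\phi$ with the convention that a non strictly increasing tuple is the zero chain. In particular, the sole role of the middle term $(0,1) \otimes (i_1, \dots, i_q)$ of $g_\phi$, which appears precisely when $s = 1$, is to account, via $h$, for the single surviving vertex $0$ at the beginning of $k(\phi,\psi)$, all other contributions being degenerate. The auxiliary identification of $\Dn{1} \otimes \nu\cn(\psi)$ with $\nu(\cn\Deltan{1} \otimes \cn(\psi))$ is the only non purely combinatorial ingredient, and follows from the naturality of the comparison \oo-functor for $\nu$.
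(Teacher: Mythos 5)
Your proof is correct and follows essentially the same route as the paper's: reduce both composites to morphisms of augmented directed complexes $\cn\Deltan{p} \to \cn\Deltan{m}$ and compare them on base elements via the same three-case analysis on the number of indices $j$ with $\phi(i_j)=0$. (The only quibble is a harmless mislabeling of which edges of the square give which composite: $\eta \circ k$ is the left-then-bottom route, and $N\alpha \circ (\Deltan{1}\times\eta)$ the top-then-right one.)
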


\begin{proof}
  Let $p \ge 0$ and fix $(\phi, \psi) : \Deltan{p} \to \Deltan{1} \times
  \Deltan{m}$ a $p$-simplex of $\Deltan{1} \times \Deltan{m}$. We want to
  compare the two $p$-simplices $\On{p} \to \On{m}$ of $N(\On{m})$
  associated to $(\phi, \psi)$ by the square of the statement. Each of these
  $p$-simplices will be induced by a morphism $\cn\Deltan{p} \to
  \cn\Deltan{m}$ and we will prove that these two morphisms are equal. We
  thus fix $(i_0, \dots,
  i_q)$ an element of the base of $\cn\Deltan{p}$ and we denote by $r$ the
  number of $0$ in the sequence $\phi(i_0), \dots, \phi(i_q)$.

  By paragraphs~\ref{paragr:desc_h} and~\ref{paragr:desc_e}, the
  morphism $\eta k$ sends the $p$-simplex $(\phi, \psi)$ to the $p$-simplex
  $\nu(f) : \On{p} \to \On{m}$ of
  $N(\On{m})$, where the morphism $f :\cn\Deltan{p} \to \cn\Deltan{m}$
  satisfies
  \[ f(i_0, \dots, i_q) = (0, \dots, 0, \psi(i_r), \dots, \psi(i_q)). \]
  In other words, we have
  \[ f(i_0, \dots, i_q) =
    \begin{cases}
      (\psi(i_0), \dots, \psi(i_q)) & \text{if $r = 0$,} \\
      (0, \psi(i_1), \dots, \psi(i_q)) & \text{if $r = 1$,} \\
      0 & \text{if $r \ge 2$.}
    \end{cases}
  \]

  Similarly, by paragraphs~\ref{paragr:desc_e} and~\ref{paragr:desc_Nalpha},
  the morphism $(N \alpha)(\Deltan{1} \times \eta)$ sends $(\phi, \psi)$ to
  the $p$-simplex
  $\nu(g) : \On{p} \to \On{m}$ of $N(\On{m})$, where the morphism $g
  :\cn\Deltan{p} \to \cn\Deltan{m}$ is the composite
  \[
    \xymatrix@C=3pc{
      \cn\Deltan{p} \ar[r]^-{g_\phi} &
      \cn\Deltan{1} \otimes \cn\Deltan{p} \ar[r]^-{\cn\Deltan{1} \otimes
        \cn(\psi)} & \cn\Deltan{1} \otimes \cn\Deltan{m} \ar[r]^-h &
      \cn\Deltan{m} \pbox{.}
    }
  \]
  To compute this composite, we distinguish three cases:
  \begin{itemize}
    \item If $r = 0$, then we have
      \[
        \begin{split}
          h(\cn\Deltan{1} \otimes \cn(\psi))g_\phi(i_0, \dots, i_q) & =
          h(\cn\Deltan{1} \otimes \cn(\psi))((1) \otimes (i_0, \dots, i_q)) \\
          & =
          h((1) \otimes (\psi(i_0), \dots, \psi(i_q))) \\
          & =
          (\psi(i_0), \dots, \psi(i_q)).
        \end{split}
      \]
    \item If $r = 1$, then we have
      \[
        \begin{split}
          \MoveEqLeft
          h(\cn\Deltan{1} \otimes \cn(\psi))g_\phi(i_0, \dots, i_q) \\
          & =
          h(\cn\Deltan{1} \otimes \cn(\psi))((0) \otimes (i_0, \dots, i_q) +
          (0,1) \otimes (i_1, \dots, i_q)) \\
          & =
          h((0) \otimes (\psi(i_0), \dots, \psi(i_q)) + (0,1) \otimes
          (\psi(i_1), \dots, \psi(i_q)) ) \\
          & =
          (0, \psi(i_1), \dots, \psi(i_q)),
        \end{split}
      \]
      where the last equality has to be checked separately in the cases $q =
      0$ and~$q \neq 0$.
    \item Finally, if $r \ge 2$, then we have
      \[
        \begin{split}
          h(\cn\Deltan{1} \otimes \cn(\psi))g_\phi(i_0, \dots, i_q) & =
          h(\cn\Deltan{1} \otimes \cn(\psi))((0) \otimes (i_0, \dots, i_q)) \\
          & =
          h((0) \otimes (\psi(i_0), \dots, \psi(i_q))) \\
          & = 0.
        \end{split}
      \]
  \end{itemize}
  This shows that $f = g$, thereby ending the proof.
\end{proof}

{
% TOCHECK
\renewcommand{\baselinestretch}{1.8}
\bibliography{biblio}
\bibliographystyle{mysmfplain}
}

\end{document}